\numberwithin{equation}{section}
\theoremstyle{plain}
\newtheorem{theorem}{Theorem}[section]
\newtheorem{proposition}{Proposition}[section]
\newtheorem{remark}{Remark}[section]
\newtheorem{corollary}{Corollary}[section]
\newtheorem{lemma}{Lemma}[section]
\newcommand{\ed}{\stackrel{\mbox{\tiny $(d)$}}{=}}
\def\e{\mathbb{E}}
\def\p{\mathbb{P}}
\newcommand{\ind}{\mbox{\rm 1\hspace{-0.04in}I}}
\newcommand{\1}[1]{\mathds{1}_{\{#1\}}}
\title{Explosion speed of continuous state branching processes indexed by the Esscher transform}
\begin{document}

\author{Lo\"{\i}c Chaumont\footnote{Univ Angers, CNRS, LAREMA, SFR MATHSTIC, F-49000 Angers, France\newline 
\url{loic.chaumont@univ-angers.fr}, \url{clement.lamoureux@univ-angers.fr}}\quad\;\; Cl\'ement Lamoureux\footnotemark[1]}
\maketitle

\begin{abstract} 
A branching process $Z$ is said to be non conservative if it hits $\infty$ in a finite time with
positive probability. It is well known that this happens if and only if the branching mechanism $\varphi$ 
of $Z$ satisfies $\int_{0+}d\lambda/|\varphi(\lambda)|<\infty$. We construct on the same 
probability space a family of conservative continuous state branching processes $Z^{(\varepsilon)}$, 
$\varepsilon\ge0$, each process $Z^{(\varepsilon)}$ having 
$\varphi^{(\varepsilon)}(\lambda)=\varphi(\lambda+\varepsilon)-\varphi(\varepsilon)$ as branching 
mechanism, and such that the family 
$Z^{(\varepsilon)}$, $\varepsilon\ge0$ converges a.s.~to $Z$, as $\varepsilon\rightarrow0$. Then we 
study the speed of convergence of $Z^{(\varepsilon)}$, when $\varepsilon\rightarrow0$, referred 
to here as the explosion speed. More specifically, we characterize the functions $f$ with 
$\lim_{\varepsilon\rightarrow0} f(\varepsilon)=\infty$ and such that the first passage times 
$\sigma_\varepsilon=\inf\{t:Z^{(\varepsilon)}_t\ge f(\varepsilon)\}$ converge toward the explosion time 
of $Z$. Necessary and sufficient conditions are obtained for the weak convergence and convergence 
in $L^1$. Then we give a sufficient condition for the almost sure convergence.
\end{abstract}


\noindent {\bf Keywords} Continuous state branching process, spectrally positive L\'evy process, 
Essher transform, Lamperti transform, first passage time

\section{Introduction}

Continuous state branching processes (CSBP) have two absorbing states: 0 and $\infty$. We say that extinction 
(resp.~explosion) occurs if the state 0 (resp.~$\infty$) is attained in a finite time. Unlike the extinction 
properties which have been intensively studied in the past, the explosion of CSBP's does not seem to have 
been the subject of much research. The extinction and explosion conditions stated in Theorems \ref{5574} and 
\ref{5374} of the following section, however, reveal a sort of duality between these two phenomena.
Our searches in the existing literature on the explosion of CSBP's have only led us to very few 
articles, most of them being quite recent: Li \cite{li}, Li and Zhou \cite{lz}, Li, Foucart and Zhou 
\cite{lfz} and some references therein. 

Both articles \cite{lz} and \cite{lfz} actually consider the more general case of nonlinear CSBP's. In the 
first one it is proved that when the process is non conservative (that is explosion occurs with positive 
probability) on its event of explosion, it tends to infinity in a finite time asymptotically along some 
deterministic curve. This speed of explosion is also characterised by the speed of convergence of the process 
of first passage times toward the explosion time. The second article provides sharper results in the case 
where the branching mechanism is regularly varying. 

In the present article, we propose a quite different way of studying the explosion of a CSBP, $Z$, by 
constructing, on the same probability space, a sequence of conservative processes $(Z^{(n)})$ that converges 
to the process $Z$. Then instead of considering the speed of explosion of the process $Z$ itself, we study 
the speed of convergence of $(Z^{(n)})$ toward $Z$. This speed is characterised, on the set of explosion of 
$Z$, by the speed of convergence of the first passage times $\sigma^{(n)}_{v_n}$ of $Z^{(n)}$ above level 
$v_n$, for some sequence $(v_n)$ that tends to $\infty$. Intuitively, if $(v_n)$ tends to $\infty$ not too 
fast, then $(\sigma^{(n)}_{v_n})$ is expected to converge to the explosion time $\zeta$ of $Z$ whereas if 
it is too fast, then $(\sigma^{(n)}_{v_n})$ should tend to $\infty$. We shall study the weak convergence, 
the convergence in $L^1$ norm and the almost sure convergence. Then we shall see that in the case of weak 
convergence an unexpected phenomenon occurs: when the speed of $(v_n)$ belongs to some critical domain, 
the weak limit of $(\sigma^{(n)}_{v_n})$ is neither $\zeta$ nor $\infty$ but the law of $\zeta+c$, 
where the constant $c>0$ can be considered as some 'residual mass'. This phenomenon is specific only to 
the weak convergence. 

There are many ways of constructing a sequence of conservative branching processes $(Z^{(n)})$ that 
converges to some non conservative process $Z$, but perhaps the most natural way is to define $Z^{(n)}$
with branching mechanism $\varphi^{(n)}(\lambda):=\varphi(\varepsilon_n+\lambda)-\varphi(\varepsilon_n)$,
where $\varepsilon_n\downarrow0$ and $\varphi$ is the branching mechanism of $Z$. Then the weak convergence 
of $(Z^{(n)})$ toward $Z$ clearly holds. Moreover, it is possible to construct the sequence $(Z^{(n)})$ on 
the same probability space as $Z$. This coupling is one of the main results of the present 
paper. We obtain the sequence $(Z^{(n)})$ by first constructing an increasing sequence of 
L\'evy processes $(X^{(n)})$ such that each process $X^{(n)}$ has branching mechanism 
$\varphi^{(n)}$ defined above, so that the law of $X^{(n)}$ is an Esscher transform of the law of $X$. 
Then $Z^{(n)}$ is obtained by the well known means of the Lamperti transformation. 
This construction ensures an a.s.~uniform convergence of $X^{(n)}$ toward $X$ and an a.s.~convergence 
of $Z^{(n)}$ toward $Z$ in the Skohorod's topology. 

The next section is devoted to some basic notions on branching processes as well as the construction of 
the sequence $(Z^{(n)})$. Then we state our main results on the explosion speed of $(Z^{(n)})$ in Section 
\ref{9936} and these results are proved in Section \ref{1109}.

\section{A family of CSBP indexed by the Esscher transform}

\subsection{A brief review of CSBP's}\label{9440}

A continuous state branching process (CSBP)
$(Z_t,\,t\ge0)$ is a $[0,\infty]$-valued Markov process with probabilities $(P_x)$ satisfying the 
following property, called the branching property,
\[E_{x_1+x_2}(e^{-\lambda Z_t})=E_{x_1}(e^{-\lambda Z_t})E_{x_2}(e^{-\lambda Z_t}),\;\;\;x_1,x_2,\lambda,t\ge0.\]
This property implies that the states 0 and $\infty$ are absorbing, see \cite{gr}. Moreover there exists a 
differentiable function $u_t:[0,\infty)\rightarrow[0,\infty)$, called the Laplace exponent of $Z$, which satisfies 
\begin{equation}\label{3071}
\e_x(e^{-\lambda Z_t})=e^{-xu_t(\lambda)},\;\;\;x,\lambda,t\ge0\,.
\end{equation}
Then the Markov property yields the following semi-group property of $u_t$, 
\[u_{t+s}(\lambda)=u_t(u_s(\lambda)),\;\;\;s,t,\lambda\ge0,\]
from which we derive that $u_t$ solves the differential equation,  
\begin{equation}\label{3451}
\frac{\partial u_t}{\partial t}(\lambda)+\varphi(u_t(\lambda))=0\;\;\;;\;\;\;u_0(\lambda)=\lambda\,,
\end{equation}
where $\varphi:[0,\infty)\rightarrow\mathbb{R}$ is the Laplace exponent of a spectrally positive L\'evy 
process (spLp) that we will denote $(X_t,\,t\ge0)$, that is
\[\e(e^{-\lambda X_t})=e^{t\varphi(\lambda)},\;\;\;\lambda\ge0.\]
We denote by $\p_x$, $x\in\mathbb{R}$ a family of probability measures under which $X$ starts from 
$x$ and we set $\p_0:=\p$. The function $\varphi$ is called the  branching mechanism of $Z$. It is a 
log-convex function such that $\lim_{\lambda\rightarrow\infty}\varphi(\lambda)=\infty$ if $X$ is not a 
subordinator and $\varphi\le0$ when $X$ is a subordinator. 
According to the L\'evy-Khintchine formula, $\varphi$ can be expressed as,
\begin{equation}\label{3455}
\varphi(\lambda)=-q+a\lambda+\frac12\sigma^2\lambda^2+
\int_{(0,\infty)}\left(e^{-\lambda x}-1+\lambda x\ind_{\{x<1\}}\right)\,\pi(dx)\,,
\end{equation}
where $q\ge0$, $a\in\mathbb{R}$, $\sigma\ge0$ and $\pi$ is a L\'evy measure, that is, since $X$ has no
positive jumps, a measure on $(0,\infty)$ such that $\int_{(0,\infty)}(x^2\wedge1)\pi(dx)<\infty$. 
We can check that for all $\lambda>0$, $u_t$ is the unique solution of (\ref{3451}) and it is given for 
all $t\ge0$ by, 
\begin{equation}\label{3453}
\int_{u_t(\lambda)}^\lambda\frac{du}{\varphi(u)}=t\,.
\end{equation}

Let us now focus on the asymptotic behaviour of CSBP's. We first define the following (disjoint) sets, 
\[A_0=\left\{\lim_{t\rightarrow\infty}Z_t=0\right\}\;\;\mbox{and}\;\;
A_\infty=\left\{\lim_{t\rightarrow\infty}Z_t=\infty\right\}\,.\]
Note that each of the absorbing states 0 and $\infty$ can be attained by $Z$ in a finite or infinite 
time. Define the largest root of the branching mechanism $\varphi$ as,
\[\rho=\sup\{\lambda\ge0:\varphi(\lambda)\le0\}\,,\]
and note that $\rho=\infty$ if and only if $\varphi$ is the Laplace exponent of a subordinator. 
Then the events $A_0$ and $A_\infty$ satisfy the dichotomy described in the two following theorems, 
see \cite{gr} and Chapter 12 of \cite{ky}. 

\begin{theorem}\label{5574}
Let $Z$ be a CSBP with branching mechanism $\varphi$. Then $\p_x(A_0\cup A_\infty)=1$, for all $x\ge0$.
Moreover,
\[P_x(A_0)=e^{-x\rho}\;\;\;\mbox{and}\;\;\;P_x(A_\infty)=1-e^{-x\rho}\,.\]
\end{theorem}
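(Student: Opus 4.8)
The plan is to work with the Laplace functional $\e_x(e^{-\lambda Z_t})=e^{-xu_t(\lambda)}$, first extracting the limiting law of $Z_t$ from the ODE (\ref{3451}), and then upgrading this weak convergence to an almost sure statement by a martingale argument. I would start by analysing the flow $t\mapsto u_t(\lambda)$ solving (\ref{3451})--(\ref{3453}). Since $\varphi$ is convex with $\varphi(0)=-q\le0$, the set $\{\lambda\ge0:\varphi(\lambda)\le0\}$ is exactly $[0,\rho]$, with $\varphi<0$ on $(0,\rho)$ and $\varphi>0$ on $(\rho,\infty)$; in particular $\varphi$ has no root strictly between $\lambda$ and $\rho$. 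A root of $\varphi$ is a fixed point of the flow which, by uniqueness of the solution of (\ref{3453}), cannot be crossed. Hence for every $\lambda>0$ the map $t\mapsto u_t(\lambda)$ is monotone (increasing if $\lambda<\rho$, decreasing if $\lambda>\rho$, constant if $\lambda=\rho$) and converges as $t\to\infty$ to the only root of $\varphi$ it can reach, namely $\lim_{t\to\infty}u_t(\lambda)=\rho\in[0,\infty]$ for all $\lambda>0$. Consequently $\e_x(e^{-\lambda Z_t})=e^{-xu_t(\lambda)}\to e^{-x\rho}$ for every $\lambda>0$, which identifies the candidate limiting law of $Z_t$ but says nothing yet about its pathwise behaviour.

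The core case is $\rho\in(0,\infty)$. Here $\varphi(\rho)=0$ makes $\rho$ a fixed point of (\ref{3451}), so $u_t(\rho)\equiv\rho$ and, by the Markov and branching properties, $M_t:=e^{-\rho Z_t}$ is a $[0,1]$-valued martingale: $\e(M_{t+s}\mid\mathcal F_s)=e^{-Z_s u_t(\rho)}=M_s$. Being bounded, it converges a.s.\ and in $L^1$ to some $M_\infty$ with $\e_x(M_\infty)=e^{-x\rho}$; since $\rho>0$ this forces $Z_t\to Z_\infty:=-\rho^{-1}\log M_\infty$ a.s.\ in $[0,\infty]$. It remains to show $Z_\infty\in\{0,\infty\}$ a.s. For every $\lambda>0$, bounded convergence and the previous paragraph give $\e_x(e^{-\lambda Z_\infty})=\lim_{t\to\infty}e^{-xu_t(\lambda)}=e^{-x\rho}$; letting $\lambda\to\infty$ yields $\p_x(Z_\infty=0)=e^{-x\rho}$ and letting $\lambda\to0+$ yields $\p_x(Z_\infty<\infty)=e^{-x\rho}$, so $\p_x(0<Z_\infty<\infty)=0$. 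This proves simultaneously that $\p_x(A_0\cup A_\infty)=1$, that $\p_x(A_0)=\p_x(Z_\infty=0)=e^{-x\rho}$, and that $\p_x(A_\infty)=1-e^{-x\rho}$.

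The two boundary cases replace the martingale by a monotonicity argument. If $\rho=0$ then $\varphi>0$ on $(0,\infty)$, which forces $q=0$ and $\varphi'(0+)\ge0$; since $\e_x(Z_t)=xe^{-\varphi'(0+)t}$, the process $Z$ is a nonnegative supermartingale, hence $Z_t\to Z_\infty<\infty$ a.s., and then $\e_x(e^{-\lambda Z_\infty})=\lim_{t\to\infty}e^{-xu_t(\lambda)}=1$ forces $Z_\infty=0$ a.s., i.e.\ $\p_x(A_0)=1=e^{-x\rho}$. If $\rho=\infty$ then $\varphi\le0$, so $X$ is a (possibly killed) subordinator and, through the Lamperti representation, $Z$ has a.s.\ non-decreasing paths; hence $Z_t\uparrow Z_\infty$ a.s., while $\e_x(e^{-\lambda Z_t})=e^{-xu_t(\lambda)}\to0$ for $x>0$ gives $Z_\infty=\infty$ a.s., i.e.\ $\p_x(A_\infty)=1=1-e^{-x\rho}$ for $x>0$ (and $Z\equiv0$, so $A_0$ holds, when $x=0$). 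The fully degenerate case $\varphi\equiv0$ is trivial and can be set aside.

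I expect the main difficulty to be organisational rather than conceptual: one must (i) establish the root structure of $\varphi$ and the uncrossability of fixed points carefully enough to justify $\lim_{t\to\infty}u_t(\lambda)=\rho$ for all $\lambda>0$ (including checking that a possible finite-time blow-up of the flow does not interfere), and (ii) notice that the clean martingale $e^{-\rho Z_t}$ degenerates precisely when $\rho\in\{0,\infty\}$, so that those two cases genuinely require the separate supermartingale and subordinator-monotonicity inputs. Once almost sure convergence of $Z_t$ is in hand, identifying the limit via the Laplace transform is routine.
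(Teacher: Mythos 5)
The paper states Theorem \ref{5574} as a classical fact, citing Grey (1974) and Chapter 12 of Kyprianou's book, and gives no proof of its own. Your argument is correct and is essentially the standard one found in those references: the bounded martingale $e^{-\rho Z_t}$ handles $\rho\in(0,\infty)$ (with the Laplace transform of the a.s.\ limit forcing $Z_\infty\in\{0,\infty\}$ and computing both probabilities), while the boundary cases $\rho=0$ and $\rho=\infty$ are correctly reduced to a nonnegative supermartingale argument and to path monotonicity of the Lamperti-transformed subordinator, respectively, the trivial case $\varphi\equiv0$ being excluded as usual.
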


\noindent In turn, each of the events $A_0$ and $A_\infty$ can be partitioned in the following way:
the event $A_0$ can be written as the union of two disjoint sets, 
$A_0=A_0^\rightarrow\cup A_0^\downarrow$, where 
\[A_0^\rightarrow=\left\{\lim_{t\rightarrow\infty}Z_t=0\;\;\mbox{and}\;\;Z_t>0\;\;
\mbox{for all $t>0$}\right\}\;\;\mbox{and}\;\;
A_0^\downarrow=\left\{Z_t=0,\;\;\mbox{for some $t>0$}\right\}\,,\]
and the event $A_\infty$ can be written as the union of two disjoint sets,  
$A_\infty=A_\infty^\rightarrow\cup A_\infty^\uparrow$, where 
\[A_\infty^\rightarrow=\left\{\lim_{t\rightarrow\infty}Z_t=\infty\;\;\mbox{and}\;\;Z_t<\infty\;\;
\mbox{for all $t>0$}\right\}\;\;\mbox{and}\;\;
A_\infty^\uparrow=\left\{Z_t=\infty,\;\;\mbox{for some $t>0$}\right\}\,.\]
Then the following dichotomy holds for each of the events $A_0$ and $A_\infty$.

\begin{theorem}\label{5374}
Let $Z$ be a CSBP with branching mechanism $\varphi$. Then  the following assertions regarding 
the event of extinction $A_0^\downarrow$ are equivalent,
\begin{itemize}
\item[$(i)$] There is $\theta>0$, such that $\displaystyle \int_\theta^\infty\frac{du}{|\varphi(u)|}<\infty$,
\item[$(ii)$] for all $x\ge0$, $\displaystyle P_x(A_0^\downarrow)=P_x(A_0)=e^{-x\rho}$ and $\rho<\infty$,
\item[$(iii)$] for all $t>0$, $u_t(\infty)<\infty$.
\end{itemize}
Similarly, for the event of explosion $A_\infty^\uparrow$, the following assertions are equivalent,
\begin{itemize}
\item[$(j)$] There is $\theta>0$, such that $\displaystyle \int_{0}^\theta\frac{du}{|\varphi(u)|}<\infty$,
\item[$(jj)$] for all $x\ge0$, $P_x(A_\infty^\uparrow)=P_x(A_\infty)=1-e^{-x\rho}$ and $\rho>0$,
\item[$(jjj)$] for all $t>0$, $u_t(0)>0$.
\end{itemize}
\end{theorem}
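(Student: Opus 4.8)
The plan is to establish the two dichotomies via the explicit representation \eqref{3453} together with the semi-group property of $u_t$, treating extinction and explosion symmetrically. For the extinction part, I would first show $(i)\Rightarrow(iii)$: assuming $\int_\theta^\infty du/|\varphi(u)|<\infty$, the function $\lambda\mapsto\int_{\lambda}^\infty du/\varphi(u)$ is finite for $\lambda$ large and, by monotone convergence in \eqref{3453}, letting $\lambda\to\infty$ shows that $u_t(\infty):=\lim_{\lambda\to\infty}u_t(\lambda)$ is the unique value $v$ with $\int_v^\infty du/\varphi(u)=t$, hence finite for every $t>0$. (One must check $\varphi>0$ eventually, which holds since $\varphi(\infty)=\infty$ when $X$ is not a subordinator; the subordinator case has $\rho=\infty$ and is excluded from $(ii)$, $(iii)$ anyway.) Conversely $(iii)\Rightarrow(i)$ is immediate from the same formula: if $u_t(\infty)<\infty$ for some $t>0$, then $\int_{u_t(\infty)}^\infty du/\varphi(u)=t<\infty$.

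Next I would derive $(ii)$ from $(iii)$ (and conversely). The key computation is that $P_x(Z_t<\infty)=\lim_{\lambda\to0+}E_x(e^{-\lambda Z_t})=\lim_{\lambda\to0+}e^{-xu_t(\lambda)}=e^{-xu_t(0+)}$, and similarly $P_x(Z_t=0)=\lim_{\lambda\to\infty}E_x(e^{-\lambda Z_t})=e^{-xu_t(\infty)}$. So $(iii)$, i.e.\ $u_t(\infty)<\infty$ for all $t$, gives $P_x(Z_t=0)=e^{-xu_t(\infty)}>0$ for all $t$; letting $t\to\infty$ and using that $u_t(\infty)$ decreases to a root of $\varphi$ (from the ODE \eqref{3451}, any finite limit must be a zero of $\varphi$, and being a decreasing limit it is the largest root $\rho$) yields $P_x(A_0^\downarrow)=\lim_t P_x(Z_t=0)=e^{-x\rho}$, which by Theorem \ref{5574} equals $P_x(A_0)$; also $\rho<\infty$ since $u_t(\infty)$ is finite and $>\rho$ would contradict convergence. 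The reverse implication $(ii)\Rightarrow(iii)$ runs backwards: $P_x(A_0^\downarrow)>0$ forces $P_x(Z_t=0)>0$ for some, hence (by the flow property $u_{t+s}(\infty)=u_t(u_s(\infty))$ and monotonicity) all $t>0$, i.e.\ $u_t(\infty)<\infty$.

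The explosion part is the mirror image under the informal substitution $\lambda\leftrightarrow\infty$, $u_t(\infty)\leftrightarrow u_t(0+)$, $0\leftrightarrow\infty$: from \eqref{3453}, $\int_{0}^\lambda du/\varphi(u)$ behaves near $0$ like $\int_0^\theta du/|\varphi(u)|$ (note $\varphi<0$ on $(0,\rho)$, so $|\varphi|=-\varphi$ there and the sign works out), and $(j)$ is equivalent to $u_t(0+)>0$ for all $t>0$, which via $P_x(Z_t=\infty)=1-P_x(Z_t<\infty)=1-e^{-xu_t(0+)}$ and letting $t\to\infty$ (so that $u_t(0+)$ increases to the smallest positive root, again $\rho$, by the ODE) gives $P_x(A_\infty^\uparrow)=1-e^{-x\rho}=P_x(A_\infty)$ and $\rho>0$. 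I would present extinction in full and then say the explosion equivalences follow ``by the same arguments, reversing the roles of $0$ and $\infty$ in \eqref{3453}.''

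The main obstacle is not any single hard estimate but getting the monotonicity and limit bookkeeping exactly right: one must justify carefully that $u_t(\infty)$ (resp.\ $u_t(0+)$) is well-defined as a monotone limit, that it solves the integral equation \eqref{3453} in the limiting sense (monotone convergence, plus ruling out that the integral could be infinite when $(i)$ holds), that its $t\to\infty$ limit is exactly $\rho$ rather than some other root or a non-root, and that the flow identity $u_{t+s}=u_t\circ u_s$ propagates finiteness/positivity from one time to all times. I would also need the elementary but essential sign analysis of $\varphi$: $\varphi<0$ on $(0,\rho)$ and $\varphi>0$ on $(\rho,\infty)$ when $X$ is not a subordinator, which is where log-convexity of $\varphi$ and $\varphi(\infty)=\infty$ enter, and which also forces the exclusion of the subordinator case from clauses $(ii)$–$(iii)$.
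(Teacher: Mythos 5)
This theorem is stated in the paper as a known fact and is \emph{not} proved there: the authors refer to Grey \cite{gr} and Chapter 12 of Kyprianou \cite{ky}, so there is no proof in the paper for your attempt to be compared against. That said, your sketch is essentially the standard argument from those references, built on the integral representation \eqref{3453} and the semi-group identity, and it is sound in outline. Two points deserve tightening. First, in $(ii)\Rightarrow(iii)$ you invoke ``the flow property and monotonicity'' to upgrade finiteness of $u_{t_0}(\infty)$ at some $t_0$ to finiteness for all $t>0$; monotonicity of $t\mapsto P_x(Z_t=0)$ only handles $t\ge t_0$. The clean route for $t<t_0$ is the one you already use for $(iii)\Rightarrow(i)$: letting $\lambda\to\infty$ in \eqref{3453} at $t=t_0$ gives $\int_{u_{t_0}(\infty)}^\infty du/\varphi(u)=t_0<\infty$, i.e.\ $(i)$, and then $(i)\Rightarrow(iii)$ yields finiteness for every $t>0$. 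Second, the subordinator case is not merely ``excluded'' from $(ii)$ and $(iii)$: one should check that $(i)$ also fails there, so the equivalence is genuinely a three-way one. This is easy, since for a subordinator $\sigma=0$ and $|\varphi(\lambda)|\le q+\nu([1,\infty))+\bigl(b+\int_0^1 x\,\pi(dx)\bigr)\lambda=O(\lambda)$, hence $\int^\infty du/|\varphi(u)|=\infty$. With these repairs your plan matches the textbook proof.
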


\subsection{Path construction of a family of CSBP's}\label{1836}

Let us first recall the Esscher transform of a spLp. This can simply be expressed from the Laplace 
exponent $\varphi$ of the process as follows: for all $\varepsilon\ge0$, the function
\[\varphi^{(\varepsilon)}(\lambda):=\varphi(\lambda+\varepsilon)-\varphi(\varepsilon),\quad\lambda\ge0,\] 
remains the Laplace exponent of a spLp. In terms of  martingale change of measure, the Esscher transform
of a spLp $X$ with Laplace exponent $\varphi$ is (the law of) a spLp $X^{(\varepsilon)}$ with Laplace exponent 
$\varphi^{(\varepsilon)}$ given by
\[\e_x(F(X^{(\varepsilon)}_s,\,s\in[0,t]))=
\e_x\left(F(X_s,\,s\in[0,t])e^{-\varepsilon X_t-t\varphi(\varepsilon)}\right)\,,\]
for all $t>0$ and all bounded, measurable functional $F$, see for instance Theorem 3.9 in \cite{ky}.\\ 

Given the Laplace exponent $\varphi$ of a spLp, through our next result, we show how to construct on the 
same probability space, an increasing family of spLp's $\{X^{(\varepsilon)},\,\varepsilon\ge0\}$ such that 
for each $\varepsilon\ge0$, $X^{(\varepsilon)}$ has Laplace exponent $\varphi^{(\varepsilon)}$.

\begin{theorem}\label{2065}
Let $\varphi$ be given by $(\ref{3455})$ and satisfying $\varphi(0)=0$. Let $(\varepsilon_n)_{n\ge0}$ be a 
decreasing sequence of real numbers such that $\lim_n\varepsilon_n=0$. 

Then on some probability space we can define an increasing sequence of L\'evy processes 
$(X^{(\varepsilon_n)})_{n\ge0}$, each process $X^{(\varepsilon_n)}$ having $\varphi^{(\varepsilon_n)}$ as 
Laplace exponent, and such that
$(S^{(\varepsilon_n)})_{n\ge0}:=(X^{(\varepsilon_{n+1})}-X^{(\varepsilon_{n})})_{n\ge0}$ is a sequence 
of independent subordinators that is itself independent of $X^{(\varepsilon_{0})}$. For each $n$, 
$S^{(\varepsilon_n)}$ has Laplace exponent,
\begin{equation}\label{3735}
\Gamma^{(\varepsilon_n)}(\lambda)=(\varepsilon_{n+1}-\varepsilon_n)\sigma^2\lambda+
\int_{(0,\infty)}\left(e^{-\lambda x}-1\right)\,\left(e^{-\varepsilon_{n+1} x}-
e^{-\varepsilon_n x}\right)\pi(dx).
\end{equation}
Moreover the sequence $(X^{(\varepsilon_{n})})$ converges uniformly over any closed intervals of 
$\mathbb{R}_+$, almost surely, toward a L\'evy process $X$ whose Laplace exponent is $\varphi$.
\end{theorem}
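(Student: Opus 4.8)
The plan is to realise the whole family as partial sums of one explicit, fully independent collection of processes, exploiting the fact that (up to the sign convention) $\varphi^{(\varepsilon_{n+1})}-\varphi^{(\varepsilon_n)}$ is exactly the Laplace exponent of a subordinator. I would first record the elementary identity behind $(\ref{3735})$. Since $\varphi(0)=0$ forces $q=0$ in $(\ref{3455})$, expanding $\varphi(\lambda+\varepsilon)-\varphi(\varepsilon)$ gives
\[\varphi^{(\varepsilon)}(\lambda)=(a+\sigma^2\varepsilon)\lambda+\tfrac12\sigma^2\lambda^2+\int_{(0,\infty)}\Big(e^{-\varepsilon x}(e^{-\lambda x}-1)+\lambda x\ind_{\{x<1\}}\Big)\pi(dx),\]
and subtracting this expression for $\varepsilon_{n+1}$ from the one for $\varepsilon_n$ makes the $\tfrac12\sigma^2\lambda^2$ term and the compensators $\lambda x\ind_{\{x<1\}}$ disappear, leaving precisely $\Gamma^{(\varepsilon_n)}$ as in $(\ref{3735})$. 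Because $(\varepsilon_n)$ is decreasing one has $(\varepsilon_n-\varepsilon_{n+1})\sigma^2\ge0$ and $(e^{-\varepsilon_{n+1}x}-e^{-\varepsilon_n x})\pi(dx)\ge0$, while the bound $0\le e^{-\varepsilon_{n+1}x}-e^{-\varepsilon_n x}\le(\varepsilon_n-\varepsilon_{n+1})x$ together with $\int(x^2\wedge1)\pi(dx)<\infty$ shows $\int_{(0,\infty)}(x\wedge1)(e^{-\varepsilon_{n+1}x}-e^{-\varepsilon_n x})\pi(dx)<\infty$; hence $\Gamma^{(\varepsilon_n)}$ is indeed the Laplace exponent of a subordinator.

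Then on a product probability space I would take a spLp $X^{(\varepsilon_0)}$ with Laplace exponent $\varphi^{(\varepsilon_0)}$ together with an independent family $(S^{(\varepsilon_n)})_{n\ge0}$ of independent subordinators having Laplace exponents $\Gamma^{(\varepsilon_n)}$, and simply \emph{define} $X^{(\varepsilon_n)}:=X^{(\varepsilon_0)}+\sum_{k=0}^{n-1}S^{(\varepsilon_k)}$. Being a finite sum of independent L\'evy processes, each $X^{(\varepsilon_n)}$ is a L\'evy process, and by the telescoping identity its Laplace exponent equals $\varphi^{(\varepsilon_0)}+\sum_{k=0}^{n-1}\Gamma^{(\varepsilon_k)}=\varphi^{(\varepsilon_n)}$; the successive differences $X^{(\varepsilon_{n+1})}-X^{(\varepsilon_n)}=S^{(\varepsilon_n)}$ carry exactly the claimed independence, and since the $S^{(\varepsilon_n)}$ have nondecreasing paths the family $(X^{(\varepsilon_n)})$ is pathwise increasing in $n$.

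For the convergence, set $R^{(n)}_t:=\sum_{k\ge n}S^{(\varepsilon_k)}_t\in[0,\infty]$, which is nondecreasing in $t$. Fixing $T>0$ and summing Laplace exponents, $\e\big(e^{-\lambda\sum_{k=n}^{m-1}S^{(\varepsilon_k)}_T}\big)=e^{T(\varphi^{(\varepsilon_m)}(\lambda)-\varphi^{(\varepsilon_n)}(\lambda))}$, and since $\varphi^{(\varepsilon_m)}(\lambda)=\varphi(\lambda+\varepsilon_m)-\varphi(\varepsilon_m)\to\varphi(\lambda)$ by continuity of $\varphi$, monotone convergence in $m$ yields $\e(e^{-\lambda R^{(n)}_T})=e^{T(\varphi(\lambda)-\varphi^{(\varepsilon_n)}(\lambda))}$. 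For $n=0$, letting $\lambda\downarrow0$ (the right side tends to $1$, the left side increases to $\p(R^{(0)}_T<\infty)$) gives $R^{(0)}_T<\infty$ a.s.; running $T$ through $\mathbb N$ and using monotonicity in $t$ gives $R^{(0)}_t<\infty$ for all $t$, a.s., so $X_t:=X^{(\varepsilon_0)}_t+R^{(0)}_t=\lim_nX^{(\varepsilon_n)}_t$ is a well-defined finite process. The uniformity is then immediate from monotonicity in $t$: on the a.s.\ event of finiteness,
\[\sup_{t\in[0,T]}\big|X_t-X^{(\varepsilon_n)}_t\big|=\sup_{t\in[0,T]}R^{(n)}_t=R^{(n)}_T,\]
and since $R^{(n)}_T\downarrow$ in $n$ with $\e(e^{-\lambda R^{(n)}_T})=e^{T(\varphi(\lambda)-\varphi^{(\varepsilon_n)}(\lambda))}\to1$, monotone convergence forces $R^{(n)}_T\downarrow0$ a.s. Intersecting over $T\in\mathbb N$ gives the a.s.\ uniform convergence on compacts; the limit $X$ is therefore c\`adl\`ag, and its finite-dimensional laws, being a.s.\ limits of those of the $X^{(\varepsilon_n)}$, are those of a L\'evy process with Laplace exponent $\lim_n\varphi^{(\varepsilon_n)}=\varphi$.

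I expect the only genuinely delicate step to be this last uniformity argument, and the feature that renders it routine is precisely the subordinator structure of the increments: the running error $X-X^{(\varepsilon_n)}$ is nondecreasing in $t$, so its supremum over $[0,T]$ is attained at $t=T$ and uniform convergence collapses to the convergence of the single variables $R^{(n)}_T$. Without monotone increments one would instead need a Dini- or Skorokhod-topology argument, which this particular coupling neatly circumvents.
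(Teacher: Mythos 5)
Your proposal is correct and follows essentially the same approach as the paper's proof: the identical decomposition of each $X^{(\varepsilon_n)}$ as $X^{(\varepsilon_0)}$ plus a partial sum of independent subordinators, the same reduction of uniform convergence on $[0,T]$ to convergence at $t=T$ via monotonicity of the error, and the same use of Laplace transforms to obtain almost-sure finiteness of the residual sum. The only cosmetic difference is that you compute the Laplace transform of $R^{(n)}_T$ directly and let $\lambda\downarrow 0$, where the paper phrases the same step as weak convergence of $\Sigma^{(n)}_t$ to an infinitely divisible law followed by identification of the a.s.\ limit.
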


Let us now recall the Lamperti representation of a CSBP with branching mechanism $\varphi$ from the path
of a spectrally positive L\'evy process $X$ with Laplace exponent $\varphi$. This is the mapping 
defined as follows:
\begin{equation}\label{2675}
L(X)_t:=\left\{\begin{array}{ll}
X({I_t}\wedge \tau),\;\;&\mbox{if ${I_t}\wedge \tau<\infty$,}\\
\infty,\;\;&\mbox{if ${I_t}\wedge \tau=\infty$,}
\end{array}
\right.
\end{equation}
where $\displaystyle I_t=\inf\left\{s:\int_0^s\frac{du}{X_u}>t\right\}$ and 
$\tau=\inf\{t\ge0:X_t\le0\}$. Note that in (\ref{2675}), for $t>0$ and $X_0>0$, both variables $I_t$ and 
$\tau$ can be infinite only if $X$ drifts to $\infty$.
Then under $\p_x$, $x\ge0$, the process $(L(X)_t,\,t\ge0)$ is a CSBP with branching mechanism $\varphi$,
issued from $x$, see for instance Theorem 12.2, p.337 in \cite{ky}. Note also that the transformation 
(\ref{2675}) is invertible and the paths of the process $(X_t,\,0\le t\le\tau)$ can be recovered from 
those of the process $Z$.\\ 

Let us fix a Laplace exponent $\varphi$ as in $(\ref{3455})$ and a decreasing sequence 
$(\varepsilon_n)$ of real numbers such that $\lim_n\varepsilon_n=0$. Then on some probability space, 
we define L\'evy processes $X$ and $X^{(\varepsilon_n)}$, $n\ge0$ as in Proposition \ref{2065} and to 
make our notations simpler, we set,
\[\varphi^{(n)}:=\varphi^{(\varepsilon_{n})}\;\;\mbox{and}\;\;X^{(n)}:=X^{(\varepsilon_{n})}\,.\]
Furthermore from the paths of the sequence $(X^{(n)},\,n\ge0)$ and those of its limit $X$, we define the 
CSBP's $(Z^{(n)},\,n\ge0)$ and $Z$ through the Lamperti transformation:
\begin{equation}\label{6592}
Z^{(n)}:=L(X^{(n)})\,,\;n\ge0\;\;\;\mbox{and}\;\;\;Z:=L(X)\,.
\end{equation}
Given the convergence result obtained in Theorem \ref{2065}, it is natural to wonder if the sequence
$(Z^{(n)},\,n\ge0)$ converges toward $Z$ in some sense. This is the purpose of the next theorem which 
follows directly from Corollary 6 in \cite{cpu}. We shall denote by $\zeta$ the first hitting time of 
$\infty$ by the process $Z$, that is,
\begin{equation}\label{6532}
\zeta=\inf\{t\ge0:Z_t=\infty\}\,,
\end{equation}
where $\inf\emptyset=\infty$ (according to our notation $\{\zeta<\infty\}=A_\infty^\uparrow$). 
Note that under $\p_x$, the processes $Z^{(n)}$, $n\ge0$ and $Z$ are issued from $x$ as well as $X$ and 
$X^{(n)}$, $n\ge0$. Then from now on, we will only use the probabilities $\p_x$, $x\ge0$ 
for the processes $X$, $X^{(n)}$, $Z$ and $Z^{(n)}$. 

\begin{theorem}\label{3789}
For all $t,x>0$, the family of processes $(Z^{(n)}_s,0\le s\le t)$ converges $\p_x$-a.s.~on the set 
$\{t<\zeta\}$ in the Skohorod's $J_1$ topology toward the process $(Z_s,0\le s\le t)$.
\end{theorem}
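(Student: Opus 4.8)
The plan is to obtain the statement directly from two inputs: the almost sure uniform convergence $X^{(n)}\to X$ on compact intervals provided by Theorem \ref{2065}, and the known continuity of the Lamperti transformation, which is the content of Corollary 6 in \cite{cpu}. The map $L$ of (\ref{2675}) sends the path of a spectrally positive L\'evy process to the path of the CSBP with the same branching mechanism, and by construction $Z^{(n)}=L(X^{(n)})$ while $Z=L(X)$; so it suffices to transport the convergence of the driving paths through $L$, up to time $t$, on the event where the limiting path $X$ lies in the continuity set of $L$.

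First I would record that, by Theorem \ref{2065}, $\p_x$-a.s.\ the paths $X^{(n)}$ converge to that of $X$ uniformly on every $[0,T]$, hence in particular in the Skorokhod $J_1$ topology; moreover, since the sequence $X^{(n)}$ is non-decreasing in $n$, the Lamperti clocks $s\mapsto\int_0^s du/X^{(n)}_u$ are non-increasing in $n$ and, on the relevant finite time horizon, converge to $s\mapsto\int_0^s du/X_u$. Next I would use the Lamperti description of the explosion time: recalling that $\{\zeta<\infty\}=A_\infty^\uparrow$ and that on this event $X$ drifts to $+\infty$, so that $\tau=\infty$ and $\zeta=\int_0^\infty du/X_u$, one checks that on $\{t<\zeta\}$ one has $I_s\wedge\tau<\infty$ for every $s\le t$ and $I_t\wedge\tau<\infty$. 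Hence the values $Z_s=X(I_s\wedge\tau)$, $s\le t$, depend only on the restriction of $X$ to the bounded random interval $[0,I_t\wedge\tau]$, on which the additive functional $u\mapsto\int_0^u dr/X_r$ is finite and strictly increasing and, because $X$ has no negative jumps, $\tau$ (when finite) is a continuity point of the path. These are exactly the non-degeneracy hypotheses under which Corollary 6 of \cite{cpu} gives $L(X^{(n)})\to L(X)$ in the $J_1$ topology, locally uniformly before the explosion time; restricted to $[0,t]$ this is the claimed convergence on $\{t<\zeta\}$.

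I expect the only genuine difficulty to be the verification that, $\p_x$-a.s.\ on $\{t<\zeta\}$, the limiting path of $X$ indeed avoids the (extinction- or explosion-related) configurations at which $L$ is known to fail to be continuous, in a way that would make $s\mapsto I_s\wedge\tau$ or the associated additive functionals discontinuous, so that Corollary 6 of \cite{cpu} genuinely applies on $[0,t]$; this has to be combined with the routine bookkeeping of aligning the restriction to $[0,t]$ before $\zeta$ with the approximating processes $Z^{(n)}$. Once these points are in place no further computation is needed, the convergence being obtained pathwise, with no appeal to tightness.
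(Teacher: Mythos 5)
Your plan is precisely the paper's: the authors state that Theorem \ref{3789} ``follows directly from Corollary 6 in \cite{cpu}'' after the pathwise uniform convergence $X^{(n)}\to X$ of Theorem \ref{2065}, which is exactly your two ingredients (a.s.\ Skorokhod convergence of the driving L\'evy paths plus continuity of the Lamperti map $L$ on the relevant paths). The scaffolding you add — monotonicity of the clocks, finiteness of $I_t\wedge\tau$ on $\{t<\zeta\}$, continuity of $X$ at $\tau$ by spectral positivity — is the verification that the hypotheses of that Corollary hold $\p_x$-a.s.\ before explosion, and is consistent with how the paper itself uses these facts in the proof of Lemma \ref{CVPS}.
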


\section{On the speed of explosion of the sequence $\{Z^{(n)},\,n\ge0\}$}\label{9936}

\subsection{Presentation of the problem}\label{9693}

Throughout the remainder of this article, we will assume that $\varphi$ is the Laplace exponent of a spLp 
satisfying the condition,
\begin{equation}\label{8378}
\mbox{$\varphi(0)=0$ and there is $\theta>0$, such that $\displaystyle \int_{0}^\theta
\frac{du}{|\varphi(u)|}<\infty$.}
\end{equation}
Then like in the previous section, we fix a decreasing sequence $(\varepsilon_n)$ such that 
$\lim_{n\rightarrow\infty}\varepsilon_n=0$ and we construct an increasing sequence of L\'evy 
processes $(X^{(n)})$ with respective Laplace exponents 
$\varphi^{(n)}(\lambda):=\varphi(\varepsilon_n+\lambda)-\varphi(\varepsilon_n)$, $\lambda\ge0$, as in 
Theorem \ref{2065}. Its limit, $X$, is then a L\'evy process with Laplace exponent $\varphi$. Note that
all the processes $X^{(n)}$, $n\ge0$ and $X$ drift to $\infty$. Recall also the construction (\ref{6592}), 
from the paths of $X^{(n)}$, $n\ge0$ and $X$, of the branching processes $Z^{(n)}$, $n\ge0$ and their 
limit $Z$ whose respective branching mechanism are $\varphi^{(n)}$, $n\ge0$ and $\varphi$.
We emphasize that, due to the time change in the transformation (\ref{6592}), the sequence $(Z^{(n)})$ 
is no longer monotone.\\

From Theorem \ref{5374} and our assumption (\ref{8378}), explosion occurs for the process $Z$, that is:
\[\mbox{for all $x>0$, $\p_x(\zeta<\infty)=\p_x(A_\infty^\uparrow)=\p_x(A_\infty)=1-e^{-x\rho}>0$},\]
where $\zeta$ is the explosion time defined in (\ref{6532}). Moreover, it can be seen from the Lamperti 
transformation (\ref{2675}) that $Z$ explodes in a 'continuous way', that is $Z_{\zeta-}=\infty$, 
$\p_x$-a.s. for all $x>0$ on the set $\{\zeta<\infty\}$, see Figure 1 below. Let us also point out on the
fact that, from Theorem \ref{2065}, the sequence of sets $\{\tau^{(n)}=\infty\}$ is increasing and tends 
to $\{\tau=\infty\}$, where $\tau^{(n)}=\inf\{t\ge0:X^{(n)}_t\le0\}$ and $\tau=\inf\{t\ge0:X_t\le0\}$. 
Moreover it appears from the construction (\ref{6592}) that 
\[\{\zeta<\infty\}=\{\tau=\infty\}\,.\]

On the other hand, since for all $n\ge0$, $|\varphi^{{(n)}'}(0)|=|\varphi'(\varepsilon_n)|<\infty$, Theorem 
\ref{5374} implies that explosion occurs for none of the processes $Z^{(n)}$, that is with obvious notation 
for all $n\ge0$ and $x\ge0$, $\p_x(A_\infty^{n,\uparrow})=0$. Note also that from (\ref{8378}), 
$\varphi'(0)=-\infty$, so that we can assume without loss of generality that 
$\varphi'(\lambda)<0$, for all $\lambda\in[0,\varepsilon_0]$. This implies that for all $n\ge0$, 
$\varphi^{{(n)}'}(0)=\varphi'(\varepsilon_n)<0$, so that 
$\rho_n:=\sup\{\lambda\ge0:\varphi^{(n)}(\lambda)\le0\}>0$. Therefore, each process $Z^{(n)}$ 
tends to $\infty$ with positive probability. This can be summarized as follows: 
\[\mbox{for all $x\ge0$ and $n\ge0$, $\p_x(A_\infty^{n,\uparrow})=0$ and 
$\p_x(A_\infty^{n})=1-e^{-x\rho_n}>0$}.\]

Let us now denote by $\sigma_{y}^{(n)}$ the first passage time by the process $Z^{(n)}$ above a level $y>0$, 
that is
\[\sigma_{y}^{(n)}=\inf\{t:Z^{(n)}_t\ge y\}\,,\]
and recall that from Theorem \ref{3789}, for all $t,x>0$, the sequence of non explosive processes 
$(Z^{(n)}_s,0\le s\le t)$, $n\ge0$, converges $\p_x$-a.s.~on the set $\{t<\zeta\}$ toward the explosive 
process $(Z_s,0\le s\le t)$ in the Skohorod's $J_1$ topology. In order to study the speed of explosion of the 
family $(Z^{(n)},\,n\ge0)$, we shall characterise the set of increasing sequences $(v_n)$ such that 
$\lim_{n\to \infty}v_n=\infty$ and for which the sequence of random variables $(\sigma_{v_n}^{(n)})$ converges 
in some sense. If $(v_n)$ is not too fast, then $(\sigma_{v_n}^{(n)})$ is expected to converge, whereas if 
the speed of convergence of $(v_n)$ is too fast compared to this of $(\varepsilon_n)$, then 
$\sigma_{v_n}^{(n)}\rightarrow\infty$ should go to $\infty$. Therefore, there must be a threshold for this 
speed under which $\sigma_{v_n}^{(n)}$ tends toward a non degenerate random variable which is actually 
expected to be the explosion time $\zeta$ of $Z$, see Figure 1 below.

\begin{figure}[H] 
\centering
\begin{tikzpicture}[scale=1]

\draw[->][line width=1] (-1,0) -- (12,0) node[right] {$t$};
\draw[->][line width=1] (0,-1) -- (0,8)  ;
\node[left,font=\small] at (0,1) {$x$} ;

\draw[dashed][line width=1] (4,-1) -- (4,8);
\node[right, red][scale=1.5] at (3.3,-0.4) {$\zeta$};
\node[right, red][scale=1.4] at (2.7,5) {$Z$};

\draw [red][line width=1.5] (0,1) -- (0.75,1.56);
\draw [red][line width=1.5] (0.75,1.56) -- (1.07,1.38);
\draw [red][line width=1.5] (1.07,1.38) -- (1.3,1.89);
\draw [red][line width=1.5] (1.33,2.33) -- (1.88,2.59);
\draw [red][line width=1.5] (1.88,2.59) -- (2.31,2.26);
\draw [red][line width=1.5] (2.31,2.26) -- (2.78,3.05);
\draw [red][line width=1.5] (2.78,3.05) -- (3.09,2.72);
\draw [red][line width=1.5] (3.09,2.72) -- (3.31,3.02);
\draw [red][line width=1.5] (3.31,3.37) -- (3.58,4.19);
\draw [red][line width=1.5] (3.58,4.19) -- (3.70,3.99);
\draw [red][line width=1.5] (3.70,3.99) -- (3.75,5.8)--(3.85,5.6);
\draw [red][line width=1.5] (3.85,5.9)--(3.94,7.8);

\draw[dashed][line width=1] (-0.5,7) -- (8.9,7) ;
\draw[dashed][line width=1] (8.7,-1) -- (8.7,7.2) ;
\node[left, font=\tiny, orange][scale=1.5]  at (8.8,-0.4) {$\sigma^{(n)}_{v_n}$};
\node[left, font=\tiny, orange][scale=1.5]  at (0.1,7.32) {$v_n$};
\node[right, orange][scale=1.2] at (9.5,6.7) {$Z^{(n)}$};

\draw [orange][line width=1.5] (0,1) -- (0.63,1.44);
\draw [orange][line width=1.5] (0.63,1.45) -- (0.86,1.30);
\draw [orange][line width=1.5] (0.86,1.30) -- (0.91,1.40);
\draw [orange][line width=1.5] (0.91,1.40) -- (1.10,1.20);
\draw [orange][line width=1.5] (1.10,1.20) -- (1.17,1.30);
\draw [orange][line width=1.5] (1.18,1.40) -- (1.36,1.80);
\draw [orange][line width=1.5] (1.36,1.80) -- (1.55,1.50);
\draw [orange][line width=1.5] (1.55,2.1) -- (1.8,2.8) -- (2.2,2) -- (2.5,2.35) -- (2.7,2.10);
\draw [orange][line width=1.5] (2.7,2.4) -- (3.2,2.7) -- (3.5,2.4) -- (3.8,2.8) -- (4.2,2.4) ;
\draw [orange][line width=1.5] (4.2, 2.8) -- (4.6 , 3.2 ) -- (5, 2.8) -- (5.6 , 2.9);
\draw [orange][line width=1.5] (5.6 , 3.4) -- (6.2, 4.2) -- (6.4, 4) -- (7, 5) -- (7.5, 4.8) -- (7.7,4.9) ;
\draw [orange][line width=1.5] (7.7, 5.4) -- (8.5,6.2) ;
\draw [orange][line width=1.5] (8.5, 6.7) -- (9,7.4) -- (9.4,6.8) -- (10, 7.4) ;

\draw[dashed][line width=1] (-0.5,5.9) -- (11,5.9);
\draw[dashed][line width=1] (10.8,-1) -- (10.8,6.15);
\node[right, font=\tiny, blue][scale=1.5]  at (10.7,-0.4) {$\sigma^{(m)}_{v_m}$};
\node[left, font=\tiny, blue][scale=1.5]  at (0.15,6.2) {$v_m$};
\node[right, blue][scale=1.2] at (11,5.7) {$Z^{(m)}$};

\draw [blue][line width=1.5] (0,1) -- (0.63,1.39);
\draw [blue][line width=1.5] (0.63,1.39) -- (0.86,1.07);
\draw [blue][line width=1.5] (0.86,1.07) -- (0.91,1.25);
\draw [blue][line width=1.5] (0.91,1.25) -- (1.10,1.02);
\draw [blue][line width=1.5] (1.10,1.02) -- (1.17,1.10);
\draw [blue][line width=1.5] (1.17,1.23) -- (1.36,1.42);
\draw [blue][line width=1.5] (1.36,1.42) -- (1.68,1.32);
\draw [blue][line width=1.5] (1.68,1.32) -- (1.80,1.53);
\draw [blue][line width=1.5] (1.80,1.53) -- (1.94,1.23) -- (2.5,2) -- (2.8, 1.8) -- (3,1.9) -- (3.3, 2.2);
\draw [blue][line width=1.5] (3.3, 2.4) -- (3.8, 2.9) -- (4.3, 2.7)-- (4.7, 3);
\draw [blue][line width=1.5] (4.7, 3.15) -- (5.2, 3.4)--(5.6,3) -- (6, 3.2);
\draw [blue][line width=1.5] (6,3.5) -- (6.2,3.7)-- (7,3.6) -- (7.8, 4.1)--(8.2,3.95) -- (8.5, 4.3);
\draw [blue][line width=1.5] (8.5, 4.55) -- (9,5.3)--(9.6,5.2) -- (9.8, 5.4) -- (10 , 5.2) ;
\draw [blue][line width=1.5] (10,5.4) -- (10.4, 5.8)-- (10.7,5.7) -- (11, 6.2)-- (11.2,6);

\end{tikzpicture}

\caption{$Z$ in red,  $Z^{(n)}$ in orange and $Z^{(m)}$ in blue for $n>m$.}

\end{figure}
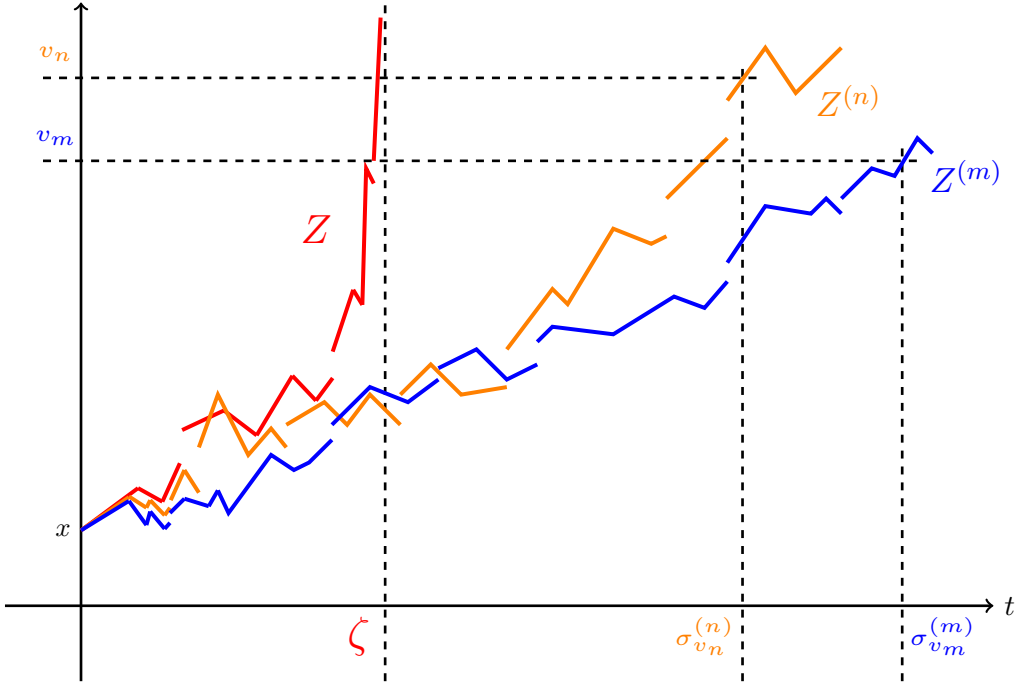

\subsection{Convergence in distribution}\label{8346}

In the remainder of this paper, we shall often use the notation,
\[\phi:=-\varphi\;\;\;\mbox{and}\;\;\;\phi^{(n)}:=-\varphi^{(n)}\,,\]
so that $\phi$ and $\phi^{(n)}$ are positive, concave functions on $(0,\rho)$ and $(0,\rho_n)$, 
respectively, where $\rho_n=\sup\{\lambda\ge0:\varphi^{(n)}\le0\}$. Moreover the sequence of functions 
$(\phi^{(n)})$ is increasing and satisfies $\lim_{n\rightarrow\infty}\phi^{(n)}=\phi$. We denote the 
Laplace exponent of each process $Z^{(n)}$ by $u^{(n)}$, see (\ref{3071}). In all the following 
statements, the function $h:\mathbb{Z}_+\rightarrow\mathbb{R}_+$ will always satisfy,
\begin{equation}\label{3930}
\mbox{$(h(n))_{n\ge0}$ is a decreasing sequence such that $\lim_{n\to\infty} h(n)=0$.}
\end{equation}
Let us also specify that when mentioning the existence of the limit of a non negative valued sequence, 
we mean that it exists in $[0,\infty]$.

\begin{theorem}\label{3332}
The following assertions are equivalent:
\begin{itemize}
\item[$1.$] There is a $[0,\infty]$-valued random variable $Y$ such that for all $k>0$ and for all $x\ge0$, 
under $\p_x$, $\sigma^{(n)}_{k/h(n)} \xrightarrow[n\to\infty]{(d)} Y$.
\item[$2.$] For all $\theta\in(0,\rho)$, 
$\displaystyle\lim_{n\to \infty}\int_{h(n)}^\theta \dfrac{du}{\phi^{(n)}(u)}$ exists.
\item[$3.$] For all $t\ge0$, $\lim_{n\to \infty}u_t^{(n)}(h(n)):=l_t(h)$ exists. 
\end{itemize}
When the above conditions are satisfied, the mapping $t\mapsto l_t(h)$ is continuous and non decreasing on 
$[0,\infty)$. It satisfies $l_0(h)=0$ and takes its values in $[0,\rho)$. Moreover, for all $x\ge0$, under 
$\p_x$,
\[Y\ed\zeta+c(h)\quad\mbox{and}\quad\lim_{n\to \infty}\int_{h(n)}^\theta \dfrac{du}{\phi^{(n)}(u)} = 
\int_{0}^\theta \dfrac{du}{\phi(u)}+c(h)\,,\]
where the constant $c(h)$ is given by $c(h)=\sup\{t\ge0:l_t(h)=0\}.$
\end{theorem}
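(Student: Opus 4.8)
The plan is to exploit the Lamperti representation to convert everything into a statement about first passage times of the spectrally positive Lévy processes $X^{(n)}$, and then to reduce the convergence in distribution to an explicit computation involving the Laplace exponents $u_t^{(n)}$. First I would recall that, by the Lamperti transform $(\ref{2675})$, the first passage time $\sigma^{(n)}_{y}=\inf\{t:Z^{(n)}_t\ge y\}$ corresponds, via the time change $I_t=\inf\{s:\int_0^s du/X_u>t\}$, to a deterministic (additive-functional) transform of the first passage time $T^{(n)}_{y}:=\inf\{t:X^{(n)}_t\ge y\}$ of $X^{(n)}$ above $y$; explicitly $\sigma^{(n)}_y=\int_0^{T^{(n)}_y}du/X^{(n)}_u$. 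Since each $X^{(n)}$ drifts to $\infty$, $T^{(n)}_y<\infty$ a.s., and one has the classical formula for the Laplace transform of first passage times of spLp's in terms of the inverse of $\varphi^{(n)}$ restricted to $[\rho_n,\infty)$. The key analytic object is that $\e_x(e^{-\lambda Z^{(n)}_t})=e^{-xu_t^{(n)}(\lambda)}$, so that for each fixed $t$, $Z^{(n)}_t$ has an explicit Laplace transform, and the event $\{\sigma^{(n)}_{k/h(n)}\le t\}=\{Z^{(n)}_t\ge k/h(n)\}$ (up to the null event that $Z^{(n)}$ overshoots continuously, which it does since $X^{(n)}$ has no positive jumps, so $Z^{(n)}$ has no positive jumps either before explosion). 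Thus $\p_x(\sigma^{(n)}_{k/h(n)}\le t)=\p_x(Z^{(n)}_t\ge k/h(n))$, and by a Markov-type estimate plus the monotonicity of $\lambda\mapsto u^{(n)}_t(\lambda)$ this probability is governed, as $n\to\infty$, by $e^{-x\,u^{(n)}_t(h(n))}$ evaluated at $\lambda=h(n)$ — this is precisely where assertion 3 enters.

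Next I would establish the equivalence $2.\Leftrightarrow 3.$ purely analytically from the ODE $(\ref{3451})$, equivalently the integral equation $(\ref{3453})$. Writing $\phi^{(n)}=-\varphi^{(n)}$, equation $(\ref{3453})$ for the map $u^{(n)}_t$ reads, for $\lambda<\rho_n$ where $\phi^{(n)}>0$,
\[
\int_{u^{(n)}_t(\lambda)}^{\lambda}\frac{du}{\phi^{(n)}(u)}=-t,\qquad\text{i.e.}\qquad \int_{\lambda}^{u^{(n)}_t(\lambda)}\frac{du}{\phi^{(n)}(u)}=t .
\]
Setting $\lambda=h(n)$, and using that $u^{(n)}_t(h(n))\in(0,\rho_n)$ increases in $t$, one sees that $\lim_n u^{(n)}_t(h(n))$ exists for all $t$ if and only if, for each $\theta\in(0,\rho)$, the quantity $G_n(\theta):=\int_{h(n)}^{\theta}du/\phi^{(n)}(u)$ has a limit in $[0,\infty]$: indeed $t=G_n(u^{(n)}_t(h(n)))-G_n(h(n))$ with $G_n(h(n))=0$, so $u^{(n)}_t(h(n))=G_n^{-1}(t)$, and passing to the limit uses monotone convergence together with $\phi^{(n)}\uparrow\phi$ on compacts of $(0,\rho)$ and the finiteness $\int_0^\theta du/\phi(u)<\infty$ from $(\ref{8378})$. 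This also immediately yields the displayed identity $\lim_n G_n(\theta)=\int_0^\theta du/\phi(u)+c(h)$, where the constant is the "defect" $c(h):=\lim_n\int_{h(n)}^{0^+}$(interpreted via the limit), and shows $t\mapsto l_t(h)$ is the continuous non-decreasing inverse of $s\mapsto \int_0^s du/\phi(u)+c(h)$ restricted appropriately, with $l_0(h)=0$, values in $[0,\rho)$, and $c(h)=\sup\{t:l_t(h)=0\}$ since $l_t(h)>0$ exactly when $t$ exceeds the residual mass.

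For the remaining implications $1.\Leftrightarrow 3.$ and the identification $Y\ed\zeta+c(h)$: from $\p_x(\sigma^{(n)}_{k/h(n)}\le t)=\p_x(Z^{(n)}_t\ge k/h(n))$ I would bound above by Markov $\le e^{x h(n)}\e_x(e^{-h(n)(k/h(n))}\dots)$ — more precisely by $\e_x(1\wedge e^{-h(n)(k/h(n)-Z^{(n)}_t)})$ and use the Laplace transform to get the two-sided control $e^{-x u^{(n)}_t(h(n))}(1-e^{-k})\le \p_x(Z^{(n)}_t\ge k/h(n))\le e^{-x u^{(n)}_t(h(n))}e^{k}$ type inequalities, so that as $n\to\infty$ one obtains $\p_x(\sigma^{(n)}_{k/h(n)}\le t)\to 1-e^{-x l_t(h)}$ for every continuity point $t$ (all $t>0$, by continuity of $l_\cdot(h)$), independently of $k$. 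Since $\zeta$ satisfies $\p_x(\zeta\le t)=\p_x(Z_t=\infty)=1-e^{-x u_t(\infty^-)}$ and, letting $h\equiv 0$, $u_t(\infty)$ solves the same inversion with $c=0$, one recognizes $1-e^{-x l_t(h)}$ as the distribution function of $\zeta+c(h)$ on $[c(h),\infty)$, with an atom $e^{-x\rho}$ at $+\infty$ matching $\p_x(A_\infty^{\uparrow,c})$. Conversely, existence of a distributional limit $Y$ for all $k$ and $x$ forces $e^{-x u^{(n)}_t(h(n))}$ to converge (take $x=1$ and use that the limit law's distribution function at $t$ determines the limit), giving assertion 3.

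The main obstacle I anticipate is the careful handling of the overshoot/boundary behaviour at level $k/h(n)\to\infty$: one must justify that $\{Z^{(n)}_t\ge k/h(n)\}$ and $\{\sigma^{(n)}_{k/h(n)}\le t\}$ differ by a null set (absence of positive jumps of $Z^{(n)}$ before its — nonexistent — explosion, inherited from spectral positivity of $X^{(n)}$), and that the Markov-type bounds above genuinely pinch the probability to $e^{-x l_t(h)}$ uniformly enough in $k$; the independence of the limit from $k$ is the slightly surprising point and is exactly what makes the three bounds collapse. A secondary technical point is the exchange of limit and integral in $\lim_n\int_{h(n)}^\theta du/\phi^{(n)}(u)$ near the lower endpoint, where both the integrand moves ($\phi^{(n)}\uparrow\phi$) and the endpoint moves ($h(n)\downarrow0$); this is dispatched by a monotonicity argument (the integrand decreases in $n$ on any fixed $(\delta,\theta)$ while the extra piece $\int_{h(n)}^{\delta}$ is controlled by $\int_0^\delta du/\phi(u)<\infty$), but it needs to be written with care to get the clean additive constant $c(h)$.
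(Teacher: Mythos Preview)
Your treatment of $2.\Leftrightarrow 3.$ via the integral equation $\int_{h(n)}^{u^{(n)}_t(h(n))}du/\phi^{(n)}(u)=t$ is correct in spirit and matches the paper's Lemma~\ref{3420}. The identification $l_t(h)=u_{(t-c(h))^+}(0)$ and hence $Y\ed\zeta+c(h)$ also follows as you indicate.

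However, the argument for $1.\Leftrightarrow 3.$ has a genuine gap. You assert that ``$X^{(n)}$ has no positive jumps'' and therefore that $\{\sigma^{(n)}_{k/h(n)}\le t\}=\{Z^{(n)}_t\ge k/h(n)\}$ up to a null set. This is backwards: \emph{spectrally positive} means the L\'evy measure is supported on $(0,\infty)$, so $X^{(n)}$ and hence $Z^{(n)}$ have \emph{only positive} jumps. The correct identity is $\{\sigma^{(n)}_{y}\le t\}=\{\overline{Z}^{(n)}_t\ge y\}$ with the running maximum $\overline{Z}^{(n)}_t=\sup_{s\le t}Z^{(n)}_s$, and in general $\overline{Z}^{(n)}_t\neq Z^{(n)}_t$ (the process may jump above $y$ and drift back below by time $t$). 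Consequently the Laplace transform $e^{-xu^{(n)}_t(h(n))}$ controls $Z^{(n)}_t$, not $\overline{Z}^{(n)}_t$, and your Markov-type two-sided bounds do not directly give $\p_x(\sigma^{(n)}_{k/h(n)}>t)$.

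The paper closes this gap with two additional ingredients you are missing. First (Lemma~\ref{CVPS}), using the pathwise coupling of Theorem~\ref{2065} it is shown that on the event $\{\zeta\le t\}$ one has $Z^{(n)}_t/\overline{Z}^{(n)}_t\to 1$ almost surely; this is nontrivial and relies on a monotonicity inequality $X^{(n)}_t/\overline{X}^{(n)}_t\ge X^{(m)}_t/\overline{X}^{(m)}_t$ for $n\ge m$ together with a law-of-large-numbers estimate. Second (Lemmas~\ref{999} and~\ref{3233}), an exponential randomisation is used: with $\mathrm{e}$ independent exponential of parameter $1$, one has exactly $\p_x(\overline{Z}^{(n)}_t<\mathrm{e}/h(n))=\e_x(e^{-h(n)\overline{Z}^{(n)}_t})$ and $\p_x(Z^{(n)}_t<\mathrm{e}/h(n))=e^{-xu^{(n)}_t(h(n))}$, and the ratio result above bridges between the two on $\{\zeta\le t\}$ while both probabilities trivially tend to $\p_x(t<\zeta)$ on $\{t<\zeta\}$. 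A final sandwiching argument (integrating against the density of $\mathrm{e}$ and using monotonicity in the level) passes from the randomised level $\mathrm{e}/h(n)$ to the deterministic level $k/h(n)$, and simultaneously shows that the limit is independent of $k$. Without these steps, your proposal does not establish $1.\Leftrightarrow 3.$
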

\noindent Note that under our assumptions $\p_x(\zeta<\infty)>0$, hence the random variable $Y$ involved
in Theorem \ref{3332} satisfies $\p_x(Y=\infty)=1$ if and only if $c(h)=\infty$.
The following remark can be understood as a complement to Theorem \ref{3332}.

\begin{remark}
Given a constant $c\in[0,\infty]$ and $\theta\in(0,\rho)$, one can always construct a decreasing function 
$h_c$ such that
\begin{equation}
\lim_{n\to\infty}\int_{h_c(n)}^\theta \dfrac{du}{\phi^{(n)}(u)} = \int_0^\theta \dfrac{du}{\phi(u)}+ c\,.\label{8423}
\end{equation}
To this end, note that the mapping $\displaystyle x\mapsto F_n(x):= \int_x^\theta\dfrac{du}{\phi^{(n)}(u)}$ defines 
a decreasing bijection from $(0,\theta)$ to $(0,\infty)$. Let us denote by $F_n^{-1}$ its inverse. Then for 
$c\in[0,\infty)$, the sequence $h_c(n):=F^{-1}_n\left(\int_0^\theta \frac{du}{\phi(u)}+c\right)$ fulfils condition 
$(\ref{8423})$ and for $c=\infty$, we can take for instance 
$h_c(n):=F^{-1}_n\left(\int_0^\theta \frac{du}{\phi(u)}+n\right)$.
\end{remark}

\noindent Theorem \ref{3332} induces the following classification of sequences:
\begin{eqnarray*}
\mathcal{Z}&:=&\{h:\mbox{$(h(n))_{n\ge0}$ satisfies conditions $1.$, $2.$ and $3.$ of 
Theorem  \ref{3332}}\}\\
\mathcal{Z}_0&:=&\{h\in\mathcal{Z}:c(h)=0\}\,,\quad\mathcal{Z}_\infty:=\{h\in\mathcal{Z}:c(h)=\infty\}\\
\mathcal{Z}_c&:=&\{h\in\mathcal{Z}:0<c(h)<\infty\}\,.
\end{eqnarray*}

\noindent We shall prove in Lemma \ref{9331} below that a sufficient condition for $h\in\mathcal{Z}_0$ is,
\begin{equation}\label{2899}
\liminf_{n\rightarrow\infty}\phi(h(n)+\varepsilon_n)/\phi(\varepsilon_n)>1\,.
\end{equation}
Except in the regularly varying case, see Theorem \ref{2385} below, it is not clear that the weak 
convergence of $(\sigma^{(n)}_{k/h(n)})$ for some $k>0$ implies this convergence for all $k>0$, that is 
$h\in\mathcal{Z}_0$. However, if this convergence holds when $k$ is replaced by an independent exponentially 
distributed random variable, then $h\in\mathcal{Z}_0$ as shown in the following result. 

\begin{proposition}\label{0573}
Let ${\rm e}$ be an exponentially distributed random variable with parameter $1$ which is independent 
of the sequence $(Z^{(n)},\,n\ge1)$. Then 
$\sigma^{(n)}_{{\rm e}/h(n)} \xrightarrow[n\to\infty]{(d)} \zeta$, if and only if $h\in\mathcal{Z}_0$.
\end{proposition}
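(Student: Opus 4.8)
The plan is to transfer the problem to the Laplace exponents $u_t^{(n)}$ via the explicit formula for the distribution of first passage times of a CSBP, then exploit the freedom provided by averaging over the exponential level. First I would recall that for a CSBP $Z^{(n)}$ started from $x$, the first passage time above $y$ satisfies $\p_x(\sigma_y^{(n)}>t)=\p_x(Z_t^{(n)}<y)$, and since $Z^{(n)}$ is non-explosive and tends to $\infty$, one has the classical expression coming from $(\ref{3071})$: conditioning on an exponential level ${\rm e}/h(n)$ and using $\e(e^{-\lambda {\rm e}})=1/(1+\lambda)$ converts the indicator $\1{Z_t^{(n)}<y}$ into a Laplace transform evaluated at $h(n)$. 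Concretely, $\p_x(\sigma^{(n)}_{{\rm e}/h(n)}>t)=\e_x\!\left(1-e^{-h(n) Z_t^{(n)}}\right)=1-e^{-x\,u_t^{(n)}(h(n))}$, using that $Z_t^{(n)}\ge {\rm e}/h(n)$ is equivalent, after integrating out ${\rm e}$, to a Bernoulli event with success probability $e^{-h(n)Z_t^{(n)}}$. This is the key simplification: the randomized level removes the awkward event $\{Z_t^{(n)}\ge y\}$ and replaces it by an exact Laplace transform, something that is not available for deterministic $k$.

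Next I would invoke Theorem \ref{3332}: the existence, for all $t\ge0$, of $l_t(h)=\lim_n u_t^{(n)}(h(n))$ is equivalent to conditions $1.$--$3.$, and when it holds one has the identification $Y\ed\zeta+c(h)$ with $c(h)=\sup\{t:l_t(h)=0\}$. So from the displayed identity, $\p_x(\sigma^{(n)}_{{\rm e}/h(n)}>t)\to 1-e^{-x\,l_t(h)}$ whenever the limit exists. For the "if" direction, assume $h\in\mathcal{Z}_0$, i.e. $c(h)=0$; then $l_t(h)>0$ for every $t>0$, and I must check that $1-e^{-x\,l_t(h)}$ is exactly $\p_x(\zeta>t)$. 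By Theorem \ref{3789} together with continuity of the explosion, $\p_x(\zeta>t)=\lim_n\p_x(Z_t^{(n)}<\infty \text{ and } \ldots)$; more directly, one shows $l_t(h)=u_t(0+):=\lim_{\lambda\downarrow 0}u_t(\lambda)$, the Laplace exponent of $Z$ "at $0$", and recalls the standard fact that $\p_x(\zeta>t)=\p_x(Z_t<\infty)=1-e^{-x u_t(0+)}$, using that $\{\zeta\le t\}=\{Z_t=\infty\}$ since $Z$ explodes continuously and $\infty$ is absorbing. Thus $\sigma^{(n)}_{{\rm e}/h(n)}\xrightarrow{(d)}\zeta$. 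For the "only if" direction, suppose $\sigma^{(n)}_{{\rm e}/h(n)}\xrightarrow{(d)}\zeta$; then $1-e^{-x u_t^{(n)}(h(n))}=\p_x(\sigma^{(n)}_{{\rm e}/h(n)}>t)$ converges to $\p_x(\zeta>t)=1-e^{-x u_t(0+)}$ at every continuity point $t$ of the law of $\zeta$, hence $u_t^{(n)}(h(n))\to u_t(0+)$ for such $t$, and monotonicity in $t$ of $u_t^{(n)}(h(n))$ (each is non-decreasing in $t$, being a Laplace exponent started from a value that the flow pushes up toward $\rho_n$) extends this to all $t$; so condition $3.$ of Theorem \ref{3332} holds with $l_t(h)=u_t(0+)$, which is $>0$ for all $t>0$ because $Z$ explodes, forcing $c(h)=\sup\{t:l_t(h)=0\}=0$, i.e. $h\in\mathcal{Z}_0$.

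The main obstacle I anticipate is the first step — rigorously justifying the identity $\p_x(\sigma^{(n)}_{{\rm e}/h(n)}>t)=1-e^{-x u_t^{(n)}(h(n))}$, in particular handling the boundary behavior at the level precisely (the event $Z_t^{(n)}= {\rm e}/h(n)$ has probability zero, and one should be careful that $\sigma^{(n)}_y=\inf\{t:Z_t^{(n)}\ge y\}$ together with upper semicontinuity of paths gives $\{\sigma^{(n)}_y>t\}=\{\sup_{s\le t}Z^{(n)}_s<y\}$, after which the supremum must be replaced by $Z_t^{(n)}$ itself using that first passage above the randomized level and the current value coincide in distribution once integrated — this uses that $\p_x(\sup_{s\le t}Z^{(n)}_s\ge y)$ and $\p_x(Z_t^{(n)}\ge y)$ need not be equal, so one really argues via $\p_x(\sigma_y^{(n)}>t)=\p_x(Z_t^{(n)}<y)$ directly from $\{\sigma_y^{(n)}\le t\}=\{Z_t^{(n)}\ge y\}$, valid because $Z^{(n)}$ jumps only upward across levels and stays at or above a level once it has no negative jumps — wait, CSBPs do have downward drift, so instead one invokes the elementary fact $\p_x(Z_t^{(n)}\ge y)=\p_x(\sigma^{(n)}_y\le t)$ which holds because a spectrally positive Lamperti transform crosses levels continuously from below is false; the correct and clean route is simply $\p_x(\sigma_y^{(n)}>t)\le \p_x(Z_t^{(n)}<y)$ plus a matching lower bound using that on $\{Z_t^{(n)}\ge y\}$ passage has occurred). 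The cleanest fix is to integrate the two-sided inequality over the exponential level, where the $O(h(n))$ discrepancies from the level being hit by a jump wash out in the limit; I would isolate this as a short lemma and otherwise let Theorem \ref{3332} do the heavy lifting.
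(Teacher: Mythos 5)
The central step of your plan — the identity $\p_x(\sigma^{(n)}_{{\rm e}/h(n)}>t)=e^{-x\,u_t^{(n)}(h(n))}$ (note also the sign error in your version: $\p_x(Z_t^{(n)}<{\rm e}/h(n))=\e_x(e^{-h(n)Z_t^{(n)}})=e^{-xu_t^{(n)}(h(n))}$, without the ``$1-$'') — does not hold, and the fix you sketch does not close the gap. The event $\{\sigma^{(n)}_y>t\}$ equals $\{\overline{Z}^{(n)}_t<y\}$ (the running supremum), while the Laplace exponent $u_t^{(n)}$ only controls $Z_t^{(n)}$ (the current value). The process $Z^{(n)}$ is a Lamperti time-change of a spectrally positive L\'evy process that, in general, has a Brownian part and/or negative drift, so $Z^{(n)}$ can rise above a level and then drop below it again; thus $\{\sigma^{(n)}_y\le t\}\supsetneq\{Z_t^{(n)}\ge y\}$, and the discrepancy
$\p_x(\sigma^{(n)}_y\le t)-\p_x(Z_t^{(n)}\ge y)=\p_x(\overline{Z}^{(n)}_t\ge y,\ Z_t^{(n)}<y)$
is not an $O(h(n))$ boundary effect that ``washes out'' when you integrate $y$ against an exponential density. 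Your ``matching lower bound'' paragraph circles through this exact issue but ends up only re-deriving the one-sided inequality $\p_x(Z_t^{(n)}\ge y)\le\p_x(\sigma^{(n)}_y\le t)$ and never produces the reverse bound.

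What actually makes the argument work in the paper is a substantive lemma you do not have in your outline: on the explosion event $\{\zeta\le t\}$, $Z_t^{(n)}/\overline{Z}_t^{(n)}\to1$ $\p_x$-a.s. (Lemma \ref{CVPS}, whose proof uses the coupling $X^{(n)}\le X^{(m)}$ for $n\le m$ and Lemma 2.1 of \cite{lz}). Combined with the trivial observation that on $\{t<\zeta\}$ both $Z_t^{(n)}$ and $\overline{Z}_t^{(n)}$ converge a.s.\ to finite limits so that the events $\{\overline{Z}_t^{(n)}\le{\rm e}/h(n)\}$ and $\{Z_t^{(n)}\le{\rm e}/h(n)\}$ both tend to the full set $\{t<\zeta\}$, this asymptotic equivalence of sup and current value on $\{\zeta\le t\}$ is exactly what lets one pass between $\p_x(\overline{Z}_t^{(n)}\le{\rm e}/h(n))$ and $\p_x(Z_t^{(n)}\le{\rm e}/h(n))=e^{-xu_t^{(n)}(h(n))}$ (these are items 2 and 3 of Lemma \ref{999}). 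Once that bridge is in place, your reduction to Theorem \ref{3332}/Lemma \ref{3420} and the identification $l_t(h)=u_t(0)$, $c(h)=0$, is correct and is indeed the paper's route — but the bridge is a missing idea, not a technicality, so as written the proposal has a genuine gap.
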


\noindent  The next results allow us to compare the limits of $(\sigma^{(n)}_{1/h(n)})$ and 
$(\sigma^{(n)}_{1/\tilde{h}(n)})$ according to the relative behaviours of the sequences $(h(n))$ and 
$(\tilde{h}(n))$.

\begin{proposition}\label{1442} Let $\tilde{h}:\mathbb{Z}_+\rightarrow\mathbb{R}_+$ 
be such that $(\tilde{h}(n))$ is decreasing with $\lim_{n\to\infty} \tilde{h}(n)=0$.
If $\sigma^{(n)}_{1/h(n)} \xrightarrow[n\to\infty]{(d)} \zeta$ and
$h(n)\le\tilde{h}(n)$, for all $n$ sufficiently large, then 
$\sigma^{(n)}_{1/\tilde{h}(n)} \xrightarrow[n\to\infty]{(d)} \zeta$.
\end{proposition}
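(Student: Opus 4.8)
The plan is to sandwich $\sigma^{(n)}_{1/\tilde h(n)}$: from above by $\sigma^{(n)}_{1/h(n)}$, whose law converges to that of $\zeta$ by hypothesis, and from below by a level-independent almost sure estimate. The case $x=0$ is trivial (then $Z^{(n)}\equiv0$, $\zeta=\infty$ and $\sigma^{(n)}_y\equiv\infty$), so I fix $x>0$ and work under $\p_x$. For the upper bound, $y\mapsto\sigma^{(n)}_y$ is nondecreasing, so $h(n)\le\tilde h(n)$ for large $n$ gives $\sigma^{(n)}_{1/\tilde h(n)}\le\sigma^{(n)}_{1/h(n)}$ for all $n$ large; applying the bounded continuous map $y\mapsto e^{-\lambda y}$ on $[0,\infty]$ to the weak convergence hypothesis then yields, for every $\lambda>0$,
\[\liminf_{n\to\infty}\e_x\bigl(e^{-\lambda\sigma^{(n)}_{1/\tilde h(n)}}\bigr)\ \ge\ \lim_{n\to\infty}\e_x\bigl(e^{-\lambda\sigma^{(n)}_{1/h(n)}}\bigr)\ =\ \e_x\bigl(e^{-\lambda\zeta}\bigr).\]

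The heart of the argument is the following lemma, which I would isolate and prove separately: \emph{for every deterministic sequence $v_n\uparrow\infty$, $\liminf_n\sigma^{(n)}_{v_n}\ge\zeta$, $\p_x$-a.s.} To prove it, fix a rational $t>0$; by Theorem \ref{3789}, $\p_x$-a.s.\ on $\{t<\zeta\}$ the paths $(Z^{(n)}_s)_{0\le s\le t}$ converge to $(Z_s)_{0\le s\le t}$ in the $J_1$ topology, and since the running-supremum functional is $J_1$-continuous while $Z$ is càdlàg and finite on $[0,t]$ for $t<\zeta$, we get $\sup_{s\le t}Z^{(n)}_s\to\sup_{s\le t}Z_s<\infty$. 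Intersecting over the countably many rationals, I obtain a $\p_x$-full event on which, for every rational $t<\zeta$, the bounded quantity $\sup_{s\le t}Z^{(n)}_s$ is eventually smaller than $v_n\to\infty$, i.e.\ $\sigma^{(n)}_{v_n}>t$ for all $n$ large, so $\liminf_n\sigma^{(n)}_{v_n}\ge t$; letting $t\uparrow\zeta$ along the rationals proves the lemma (on $\{\zeta=\infty\}$ the same estimate holds for every $t$ and forces $\liminf_n\sigma^{(n)}_{v_n}=\infty$).

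Finally, applying the lemma with $v_n=1/\tilde h(n)$ and using that $y\mapsto e^{-\lambda y}$ is continuous and nonincreasing on $[0,\infty]$, we get $\limsup_n e^{-\lambda\sigma^{(n)}_{1/\tilde h(n)}}=e^{-\lambda\liminf_n\sigma^{(n)}_{1/\tilde h(n)}}\le e^{-\lambda\zeta}$ $\p_x$-a.s., so reverse Fatou's lemma (the integrands lie in $[0,1]$) gives $\limsup_n\e_x(e^{-\lambda\sigma^{(n)}_{1/\tilde h(n)}})\le\e_x(e^{-\lambda\zeta})$. Together with the first display this shows $\e_x(e^{-\lambda\sigma^{(n)}_{1/\tilde h(n)}})\to\e_x(e^{-\lambda\zeta})$ for all $\lambda>0$, and since the Laplace transform characterises weak convergence of $[0,\infty]$-valued random variables, $\sigma^{(n)}_{1/\tilde h(n)}\xrightarrow[n\to\infty]{(d)}\zeta$ under $\p_x$. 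I expect the only genuine obstacle to be the lemma: one must respect that Theorem \ref{3789} grants $J_1$-convergence only on $\{t<\zeta\}$, handle the quantifier order by passing to rational times, and invoke $J_1$-continuity of the supremum functional so that $\sup_{s\le t}Z^{(n)}_s$ stays bounded while the thresholds $1/\tilde h(n)$ run off to infinity; the monotonicity in the level, the reverse Fatou step, and the Laplace-transform characterisation are routine.
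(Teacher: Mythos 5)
Your proof is correct, and it follows a genuinely different route from the paper's. The paper reuses its pathwise limit $\overline{Z}^{(n)}_t \to \overline{Z}_t$ (Lemma \ref{CVPS}) to establish the distributional identity (\ref{1256}), which for every decreasing $f(n)\to0$ gives $\p_x(\overline{Z}^{(n)}_t\le 1/f(n),\,t<\zeta)\to e^{-xu_t(0)}$; combined with the hypothesis this forces $\p_x(\overline{Z}^{(n)}_t\le 1/h(n),\,t\ge\zeta)\to 0$, the monotone comparison $1/\tilde h(n)\le 1/h(n)$ then gives $\p_x(\overline{Z}^{(n)}_t\le 1/\tilde h(n),\,t\ge\zeta)\to0$, and (\ref{1256}) for $f=\tilde h$ closes the argument at the level of distribution functions. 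You instead isolate a pathwise, level-independent lower bound — $\liminf_n\sigma^{(n)}_{v_n}\ge\zeta$ $\p_x$-a.s.\ for every $v_n\uparrow\infty$ — proved from Theorem \ref{3789} via the $J_1$-continuity of $f\mapsto\sup_{[0,t]}f$ (which is standard: the $J_1$ time-changes on $[0,t]$ are self-bijections, so they preserve the supremum), and sandwich with the monotonicity-based upper bound through Laplace transforms and reverse Fatou. The two arguments are morally equivalent — both ultimately rest on a.s.\ convergence of running suprema — but the bookkeeping differs: the paper manipulates fixed-$t$ tail probabilities, while your lemma upgrades this to an almost-sure $\liminf$ statement, which is a slightly stronger intermediate conclusion (it is the pathwise analogue of the paper's (\ref{1256})) and arguably more illuminating about why over-aggressive levels cannot make $\sigma^{(n)}_{v_n}$ anticipate $\zeta$. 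The price is that you invoke $J_1$-continuity of the supremum functional and the countable-rationals intersection, neither hard, but not already written out in the paper; the paper's version is self-contained given its earlier lemmas and shorter. Both are sound.
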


\begin{proposition}\label{1332} Let $h\in\mathcal{Z}$ and $\tilde{h}:\mathbb{Z}_+\rightarrow\mathbb{R}_+$ 
be such that $(\tilde{h}(n))$ is a decreasing sequence with $\lim_{n\to\infty} \tilde{h}(n)=0$. 
\begin{itemize}
\item[$1.$] If $\tilde{h}\asymp h$, then $\tilde{h}\in\mathcal{Z}$ and $c(\tilde{h})=c(h)$. 
\item[$2.$] If $h\in\mathcal{Z}_c$ and if $\tilde{h}\in\mathcal{Z}$ satisfies 
$h(n)\le\tilde{h}(n)$ for all $n$ sufficiently large, then $c(\tilde{h})\le c(h)$.
\item[$3.$] If $h\in\mathcal{Z}_0$ and if $\tilde{h}$ satisfies 
$h(n)\le\tilde{h}(n)$ for all $n$ sufficiently large, then $\tilde{h}\in\mathcal{Z}_0$.
\item[$4.$] If $h\in\mathcal{Z}_\infty$ and if $\tilde{h}$ satisfies 
$\tilde{h}(n)\le h(n)$ for all $n$ sufficiently large, then $\tilde{h}\in\mathcal{Z}_\infty$.
\end{itemize}
\end{proposition}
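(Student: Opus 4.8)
The plan is to reduce all four items to the reformulation supplied by Theorem~\ref{3332}: a decreasing sequence $g$ with $g(n)\to0$ belongs to $\mathcal{Z}$ if and only if, for every $\theta\in(0,\rho)$, the quantity $I_n(g):=\int_{g(n)}^{\theta}du/\phi^{(n)}(u)$ converges in $[0,\infty]$ as $n\to\infty$, and in that case $c(g)=\lim_n I_n(g)-\int_0^\theta du/\phi(u)$. Since $\int_0^\theta du/\phi(u)<\infty$ by \eqref{8378}, comparing the constants $c(\cdot)$ amounts to comparing the limits $\lim_n I_n(\cdot)$, up to this fixed additive constant, so everything will be phrased in terms of the $I_n$. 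I would use two elementary facts, valid for all $n$ large enough that $\theta<\rho_n$ and the relevant left endpoints lie below $\theta$: (a) for fixed $\theta$ the map $x\mapsto\int_x^\theta du/\phi^{(n)}(u)$ is non increasing, so if $g_1(n)\le g_2(n)$ then $I_n(g_2)\le I_n(g_1)$; and (b) $\phi^{(n)}\le\phi$ pointwise on $(0,\rho_n)$, so $I_n(g)\ge\int_{g(n)}^\theta du/\phi(u)$, and the latter tends to $\int_0^\theta du/\phi(u)$ as soon as $g(n)\to0$.

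The only step where I expect genuine work, and which is needed solely for item~$1.$, is the estimate
\[
\int_{ah(n)}^{bh(n)}\frac{du}{\phi^{(n)}(u)}\;\xrightarrow[n\to\infty]{}\;0\qquad\text{for any fixed constants }0<a\le b.
\]
Here I would use the concavity of $\phi^{(n)}$ on $(0,\rho_n)$ together with $\phi^{(n)}(0)=0$, which makes $u\mapsto\phi^{(n)}(u)/u$ non increasing; since $h(n)\to0$ while $\rho_n\ge\rho_0>0$ (the $\phi^{(n)}$ being increasing in $n$), for $n$ large the whole interval $[ah(n),bh(n)]$ lies in $(0,\rho_n)$, there $\phi^{(n)}(u)\ge\frac{u}{bh(n)}\phi^{(n)}(bh(n))$, and integrating gives the bound $\frac{bh(n)}{\phi^{(n)}(bh(n))}\log(b/a)$. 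It then suffices to control $bh(n)/\phi^{(n)}(bh(n))$: writing $\phi^{(n)}(bh(n))=\varphi(\varepsilon_n)-\varphi(\varepsilon_n+bh(n))$ and using the convexity of $\varphi$ with the normalisation that $\varphi'<0$ on $[0,\varepsilon_0]$, one gets for $n$ large $\phi^{(n)}(bh(n))\ge bh(n)\,|\varphi'(\varepsilon_n+bh(n))|$, hence $bh(n)/\phi^{(n)}(bh(n))\le|\varphi'(\varepsilon_n+bh(n))|^{-1}\to0$ because $\varepsilon_n+bh(n)\to0^+$ and $\varphi'(0^+)=-\infty$ by \eqref{8378}. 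This is the crux; it is the one place where both the concavity of $\phi^{(n)}$ and the explosion hypothesis \eqref{8378} are genuinely used.

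With this estimate in hand the four items are bookkeeping. For item~$1.$, I would rewrite $\tilde h\asymp h$ as $ah(n)\le\tilde h(n)\le bh(n)$ for $n$ large, with $0<a\le1\le b$; then $|I_n(\tilde h)-I_n(h)|$ is the integral of $du/\phi^{(n)}(u)$ over the interval bounded by $h(n)$ and $\tilde h(n)$, which is contained in $[ah(n),bh(n)]$, so $|I_n(\tilde h)-I_n(h)|\to0$; as $h\in\mathcal{Z}$, $I_n(h)$ converges, hence so does $I_n(\tilde h)$ to the same limit, giving $\tilde h\in\mathcal{Z}$ and $c(\tilde h)=c(h)$. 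For item~$2.$, fact~(a) with $h(n)\le\tilde h(n)$ gives $I_n(\tilde h)\le I_n(h)$, and letting $n\to\infty$ (licit since $h,\tilde h\in\mathcal{Z}$) yields $c(\tilde h)\le c(h)$. For item~$3.$, fact~(a) gives $I_n(\tilde h)\le I_n(h)\to\int_0^\theta du/\phi(u)$ (as $c(h)=0$), while fact~(b) gives $\liminf_n I_n(\tilde h)\ge\int_0^\theta du/\phi(u)$; hence $I_n(\tilde h)\to\int_0^\theta du/\phi(u)$ for every $\theta\in(0,\rho)$, i.e.\ $\tilde h\in\mathcal{Z}$ with $c(\tilde h)=0$. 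For item~$4.$, fact~(a) with $\tilde h(n)\le h(n)$ gives $I_n(\tilde h)\ge I_n(h)\to\infty$ (as $c(h)=\infty$), so $I_n(\tilde h)\to\infty$, an admissible limit, and $\tilde h\in\mathcal{Z}$ with $c(\tilde h)=\infty$. Items~$2.$, $3.$ and $4.$ use only the monotonicity facts; item~$1.$ is the single place needing two-sided control, which the symmetric hypothesis $\tilde h\asymp h$ provides, and it hinges entirely on the displayed estimate.
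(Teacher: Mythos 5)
Your proof is correct and follows essentially the same route as the paper: both reduce all four items to the integral criterion of Theorem~\ref{3332}, use monotonicity of $x\mapsto\int_x^\theta du/\phi^{(n)}(u)$ for items $2.$--$4.$, and for item $1.$ hinge on the same vanishing estimate $\int_{ah(n)}^{bh(n)}du/\phi^{(n)}(u)\to0$, which the paper proves inside Lemma~\ref{999} via the mean value theorem and you re-derive via concavity of $\phi^{(n)}$ with $\phi^{(n)}(0)=0$ — these are the two faces of inequality~(\ref{concav}). The only cosmetic difference is that the paper argues items $1.$ and $4.$ at the level of the Laplace exponents $u_t^{(n)}$ before passing back through Theorem~\ref{3332}, whereas you stay with the integrals $I_n$ throughout, which is a tidy unification but not a different method.
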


\noindent  We can see in assertion 2.~of Proposition \ref{1332} that if $h\in\mathcal{Z}_c$, 
then it is not enough that $h\le\tilde{h}$ or $\tilde{h}\le h$ to deduce something on $\tilde{h}$, contrary 
to what is asserted in parts $3.$ and $4.$ Actually the proof of assertion 2.~requires the additional 
hypothesis that $\tilde{h}\in\mathcal{Z}$.\\ 

The following proposition suggests that the existence of the limit of 
$|\log{h(n)}|/\phi'({\varepsilon_n})$ as $n$ tends to $\infty$ might be a better criterion for 
the weak convergence of $(\sigma^{(n)}_{k/h(n)})$ than the convergence of 
$\displaystyle\int_{h(n)}^\theta \dfrac{du}{\phi^{(n)}(u)}$,
as stated in Theorem \ref{3332}. This can actually be proved in the regularly varying case, see 
Theorem \ref{2385}. 

\begin{proposition}\label{1100} $\mbox{}$ 
\begin{itemize}
\item[$1.$] If $\displaystyle\lim_{n\to\infty}\dfrac{|\log{h(n)}|}{\phi'({\varepsilon_n})}=\infty$, 
then $h\in\mathcal{Z}_\infty$.
\item[$2.$] If $h\in\mathcal{Z}_0$, then 
$\displaystyle\lim_{n\to\infty}\dfrac{|\log{h(n)}|}{\phi'({\varepsilon_n})}=0$.
\end{itemize}
\end{proposition}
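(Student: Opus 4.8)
My plan is to reduce both statements to condition $2.$ of Theorem \ref{3332}, i.e.\ to the behaviour of $\int_{h(n)}^{\theta}d\lambda/\phi^{(n)}(\lambda)$, via one elementary estimate on $\phi^{(n)}$ near $0$ together with an auxiliary lemma on $\phi'$.

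\emph{The estimate.} Recall $\phi^{(n)}(\lambda)=\phi(\lambda+\varepsilon_n)-\phi(\varepsilon_n)$, that $\phi$ is concave (so $\phi'$ is non-increasing), and that $\phi'(\varepsilon_n)>0$ (since $\varepsilon_n\le\varepsilon_0$). Then for $0<\lambda\le\varepsilon_n$,
\[
0<\phi^{(n)}(\lambda)=\int_{\varepsilon_n}^{\lambda+\varepsilon_n}\phi'(w)\,dw\le\lambda\,\phi'(\varepsilon_n),
\]
whence $\int_{h(n)}^{\varepsilon_n}d\lambda/\phi^{(n)}(\lambda)\ge\phi'(\varepsilon_n)^{-1}\log(\varepsilon_n/h(n))$ as soon as $h(n)<\varepsilon_n$. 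Concavity of $\phi$ also yields $\phi^{(n)}\le\phi$ on $(0,\rho_n)$, hence $\int_{\varepsilon_n}^{\theta}d\lambda/\phi^{(n)}(\lambda)\ge\int_{\varepsilon_n}^{\theta}d\lambda/\phi(\lambda)\to I:=\int_{0}^{\theta}d\lambda/\phi(\lambda)<\infty$. Adding the two pieces, for every large $n$ with $h(n)<\varepsilon_n$,
\begin{equation}
\int_{h(n)}^{\theta}\frac{d\lambda}{\phi^{(n)}(\lambda)}-I\;\ge\;\frac{|\log h(n)|-|\log\varepsilon_n|}{\phi'(\varepsilon_n)}+r_n,\qquad r_n:=\int_{\varepsilon_n}^{\theta}\frac{d\lambda}{\phi(\lambda)}-I\longrightarrow 0.\tag{$\star$}
\end{equation}

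\emph{The lemma}, which is where the real work lies: under $(\ref{8378})$, $\displaystyle\lim_{u\to0^+}|\log u|/\phi'(u)=0$; in particular $|\log\varepsilon_n|=o(\phi'(\varepsilon_n))$. To prove it, a dyadic decomposition of $\int_u^{\theta'}d\lambda/\phi(\lambda)$ using $\phi(2\lambda)\le2\phi(\lambda)$ gives $\int_u^{\theta'}d\lambda/\phi(\lambda)\ge c\,u\log(1/u)/\phi(u)$; letting $\theta'\downarrow0$ makes the left side arbitrarily small, so $\phi(u)/(u\log(1/u))\to\infty$, i.e.\ $\psi(u):=\phi(u)/u\gg\log(1/u)$. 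One then upgrades this to $\phi'(u)\gg\log(1/u)$ from $\phi'(u)\ge(\phi(2u)-\phi(u))/u=2\psi(2u)-\psi(u)$ together with the elementary bounds $\psi(2u)\in[\tfrac12\psi(u),\psi(u)]$, $\phi(2u)\in[\phi(u),2\phi(u)]$: if $\psi$ loses less than a fixed proportion across $[u,2u]$ then $\phi'(u)\ge\tfrac12\psi(2u)\gg\log(1/u)$; otherwise one writes $\phi'(u)\ge\psi(u)\bigl(2-\psi(u)/\psi(2u)\bigr)$ and checks, by iterating over a run of consecutive dyadic scales below a fixed level and using $\phi(0)=0$ with $\phi$ bounded, that the right side still dominates $\log(1/u)$. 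I expect this scale‑bookkeeping to be the main technical obstacle.

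\emph{Conclusion.} Granting the lemma, $(\star)$ finishes both parts. For $1.$: if $|\log h(n)|/\phi'(\varepsilon_n)\to\infty$, the lemma gives $|\log\varepsilon_n|/\phi'(\varepsilon_n)\to0$, so $h(n)<\varepsilon_n$ for all large $n$ and the right side of $(\star)$ tends to $+\infty$; hence $\int_{h(n)}^{\theta}d\lambda/\phi^{(n)}(\lambda)\to\infty$, which is a legitimate limit in $[0,\infty]$, so $h\in\mathcal{Z}$ by Theorem \ref{3332} and, that limit being $I+c(h)$, we get $c(h)=\infty$, i.e.\ $h\in\mathcal{Z}_\infty$. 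For $2.$: if $h\in\mathcal{Z}_0$ then $c(h)=0$, so $\int_{h(n)}^{\theta}d\lambda/\phi^{(n)}(\lambda)\to I$; restricting $(\star)$ to the (all large) $n$ with $h(n)<\varepsilon_n$ forces $\limsup_n(|\log h(n)|-|\log\varepsilon_n|)/\phi'(\varepsilon_n)\le0$, hence $\limsup_n|\log h(n)|/\phi'(\varepsilon_n)\le\limsup_n|\log\varepsilon_n|/\phi'(\varepsilon_n)=0$; for the remaining $n$, $h(n)\ge\varepsilon_n$ gives $|\log h(n)|/\phi'(\varepsilon_n)\le|\log\varepsilon_n|/\phi'(\varepsilon_n)\to0$ directly. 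Since the ratio is non-negative, $\lim_n|\log h(n)|/\phi'(\varepsilon_n)=0$.
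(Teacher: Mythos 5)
Your reduction to condition $2.$ of Theorem \ref{3332} is the right idea, but the proof has a genuine gap, and the gap arises from an unforced error at the very start. You derive the bound $\phi^{(n)}(\lambda)\le\lambda\phi'(\varepsilon_n)$ only for $0<\lambda\le\varepsilon_n$, and therefore split the integral at $\varepsilon_n$, which saddles you with the extra term $|\log\varepsilon_n|/\phi'(\varepsilon_n)$ in $(\star)$; controlling that term is precisely your auxiliary lemma ``$|\log u|/\phi'(u)\to 0$ as $u\to 0^+$.'' But the inequality $\phi^{(n)}(\lambda)=\int_{\varepsilon_n}^{\lambda+\varepsilon_n}\phi'(w)\,dw\le\lambda\,\phi'(\varepsilon_n)$ uses only that $\phi'$ is non-increasing on $[\varepsilon_n,\lambda+\varepsilon_n]$, hence it holds for every $\lambda$ with $\lambda+\varepsilon_n<\gamma$, not just $\lambda\le\varepsilon_n$. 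Using it on the whole interval $(h(n),a)$, for any fixed $a<\gamma$ and $n$ large enough that $a+\varepsilon_n<\gamma$, gives directly
\[
\int_{h(n)}^{a}\frac{du}{\phi^{(n)}(u)}\;\ge\;\frac{\log a-\log h(n)}{\phi'(\varepsilon_n)},
\]
which is exactly the paper's inequality $(\ref{concav})$. From this, both parts fall out at once with no auxiliary lemma: for part $1.$, $\liminf_n\int_{h(n)}^{\theta}du/\phi^{(n)}(u)=\infty$, so by Theorem \ref{3332} the limit exists and equals $\infty$, i.e.\ $h\in\mathcal{Z}_\infty$; for part $2.$, $h\in\mathcal{Z}_0$ gives $\int_{h(n)}^{a}du/\phi^{(n)}(u)\to\int_0^a du/\phi(u)$ by dominated convergence on $[a,\theta)$, whence (since $\phi'(\varepsilon_n)\to\infty$) $\limsup_n|\log h(n)|/\phi'(\varepsilon_n)\le\int_0^a du/\phi(u)$, and one lets $a\to 0$.

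As for the auxiliary lemma itself: your argument that $\int_0^\theta du/\phi(u)<\infty$ forces $\phi(u)/u\gg\log(1/u)$ is fine (the dyadic lower bound with $\theta'\downarrow 0$ works). But the ``upgrade'' from $\phi(u)/u\gg\log(1/u)$ to $\phi'(u)\gg\log(1/u)$ is not established. The inequality $\phi'(u)\ge 2\psi(2u)-\psi(u)$ degenerates to $0$ exactly when $\psi(2u)$ is close to $\psi(u)/2$, and the case analysis you sketch (``a run of consecutive dyadic scales'') does not account for the fact that $\psi$ can lose nearly half its value on a single dyadic step at \emph{every} scale along a sparse subsequence. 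The conclusion is, I believe, true, but establishing it cleanly needs a genuinely different argument than the one outlined, and the paper sidesteps the whole issue by not splitting the integral at $\varepsilon_n$ in the first place.
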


In the case where $\varphi$ is regularly varying at $0$, a better criterion allows us to characterize 
the limit of $\sigma_{1/h(n)}^{(n)}$.

\begin{theorem}\label{2385}
Suppose that $\varphi$ is regularly varying at $0$. Then the following assertions are equivalent:
\begin{itemize}
\item[$1.$] The limit $c(h):=\displaystyle\lim_{n\to\infty} \dfrac{|\log{h(n)}|}{\phi'({\varepsilon_n})}$ 
exists. 
\item[$2.$] For all $k>0$, $\sigma^{(n)}_{k/h(n)} \xrightarrow[n\to\infty]{(d)} \zeta + c(h)$, that is 
$h\in\mathcal{Z}$.
\end{itemize}
When these assumptions are satisfied, the constant the constant $c(h)$ is the same as that in
Theorem $\ref{3332}$.
Moreover assertions $1.$~and $2.$~for $c(h)=0$ are equivalent to the following one: 
\begin{itemize}
\item[$3.$] For some $k>0$, $\sigma^{(n)}_{k/h(n)} \xrightarrow[n\to\infty]{(d)} \zeta$.
\end{itemize}
\end{theorem}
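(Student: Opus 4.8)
\textbf{Proof plan for Theorem \ref{2385}.}

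The plan is to deduce everything from Theorem \ref{3332} by showing that, under regular variation of $\varphi$ at $0$, the quantity $\int_{h(n)}^\theta du/\phi^{(n)}(u)$ is, up to a bounded error, the same as $|\log h(n)|/\phi'(\varepsilon_n)$ plus a term converging to $\int_0^\theta du/\phi(u)$. Write $\phi=-\varphi$ and recall $\phi^{(n)}(u)=\phi(u+\varepsilon_n)-\phi(\varepsilon_n)$. Say $\phi$ is regularly varying at $0$ with index $\alpha$; since $\phi$ is concave and positive on $(0,\rho)$ with $\phi(0)=0$, necessarily $\alpha\in[0,1]$, and the case $\alpha=1$ with $\phi'(0)=-\varphi'(0)=+\infty$ is compatible with (\ref{8378}) (e.g.\ $\phi(u)\sim u/|\log u|$). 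The first step is to split the integral at, say, $\varepsilon_n$ (assuming $h(n)\le\varepsilon_n$ for large $n$, which one may do after comparing with Proposition \ref{1332}; if not, the contribution is negligible anyway). On $(h(n),\varepsilon_n)$ we have $u+\varepsilon_n\asymp\varepsilon_n$, so by regular variation $\phi(u+\varepsilon_n)-\phi(\varepsilon_n)\sim \phi'(\varepsilon_n)\cdot u$ uniformly enough to give
\[
\int_{h(n)}^{\varepsilon_n}\frac{du}{\phi^{(n)}(u)} = \frac{1}{\phi'(\varepsilon_n)}\int_{h(n)}^{\varepsilon_n}\frac{du}{u}\,(1+o(1)) = \frac{|\log h(n)|}{\phi'(\varepsilon_n)}(1+o(1)) + O(1),
\]
using $\log\varepsilon_n = o(|\log h(n)|)$ when the left side is unbounded, and absorbing bounded terms otherwise. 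The main quantitative input here is the uniform convergence theorem for regularly varying functions (Potter bounds), applied to $\phi$ near $0$.

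The second step handles the tail $\int_{\varepsilon_n}^\theta du/\phi^{(n)}(u)$. Here $\phi^{(n)}(u)=\phi(u+\varepsilon_n)-\phi(\varepsilon_n)$ increases to $\phi(u)$ pointwise, and one shows by monotone/dominated convergence (with domination coming from concavity, e.g.\ $\phi^{(n)}(u)\ge \tfrac12\phi(u)$ for $u\ge\varepsilon_n$ and $n$ large, since $\phi(\varepsilon_n)\to 0$) that
\[
\int_{\varepsilon_n}^\theta \frac{du}{\phi^{(n)}(u)} \xrightarrow[n\to\infty]{} \int_0^\theta \frac{du}{\phi(u)},
\]
the right side being finite precisely by assumption (\ref{8378}). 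Combining the two steps gives
\[
\int_{h(n)}^\theta \frac{du}{\phi^{(n)}(u)} = \frac{|\log h(n)|}{\phi'(\varepsilon_n)}(1+o(1)) + \int_0^\theta\frac{du}{\phi(u)} + o(1),
\]
so condition 2.\ of Theorem \ref{3332} (existence of the limit of the left side) is equivalent to the existence of $\lim_n |\log h(n)|/\phi'(\varepsilon_n)$, i.e.\ assertion 1.\ here; and when the limit exists, matching with the conclusion of Theorem \ref{3332} identifies $c(h)=\lim_n|\log h(n)|/\phi'(\varepsilon_n)$ (in particular the two notions of $c(h)$ coincide, since $\phi'(\varepsilon_n)=-\varphi'(\varepsilon_n)>0$ in the notation fixed after (\ref{8378})). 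This proves $1.\Leftrightarrow 2.$, with assertion 2.\ here being exactly ``$h\in\mathcal{Z}$ with this value of $c(h)$'' by Theorem \ref{3332}.

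For the final equivalence with assertion 3.\ (when $c(h)=0$): clearly 2.\ $\Rightarrow$ 3.\ trivially. For 3.\ $\Rightarrow$ 1.\ with $c(h)=0$, suppose $\sigma^{(n)}_{k_0/h(n)}\to\zeta$ weakly for one $k_0>0$. The point is that in the regularly varying case the asymptotics above do not actually depend on $k$: replacing $h(n)$ by $h(n)/k_0$ changes $|\log h(n)|$ by $O(1)$, hence changes $\int_{h(n)/k_0}^\theta du/\phi^{(n)}(u)$ by at most $O(1/\phi'(\varepsilon_n))=o(1)$ (since $\phi'(\varepsilon_n)\to\infty$ because $\phi'(0^+)=+\infty$). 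So weak convergence for $k_0$ forces the limit of $\int_{h(n)}^\theta du/\phi^{(n)}(u)$ to exist and equal $\int_0^\theta du/\phi(u)$, i.e.\ $h\in\mathcal Z_0$, which by the already proved equivalence is assertion 1.\ with $c(h)=0$. \textbf{The main obstacle} I anticipate is making the uniformity in the first step genuinely rigorous when $\alpha=1$ and $\phi'(0^+)=\infty$: there $\phi'(\varepsilon_n)\to\infty$ slowly and the approximation $\phi(u+\varepsilon_n)-\phi(\varepsilon_n)\approx\phi'(\varepsilon_n)u$ on $(h(n),\varepsilon_n)$ must be controlled via the derivative's regular variation (or a Karamata-type representation of $\phi$), rather than just Potter bounds on $\phi$ itself; one must check the error integrates to $o(|\log h(n)|/\phi'(\varepsilon_n)) + O(1)$.
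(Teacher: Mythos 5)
Your overall strategy — split the integral, use the concavity inequality \eqref{concav}, identify the contribution near $h(n)$ with $|\log h(n)|/\phi'(\varepsilon_n)$, show the tail converges to $\int_0^\theta du/\phi(u)$, and then transfer to Theorem~\ref{3332} — is essentially the route taken in the paper. The transfer of assertion 3 to assertions 1 and 2 via the ``$k$ is harmless'' observation is also in the spirit of the paper, which instead composes Proposition~\ref{1442} with the already-proved equivalence $1\Leftrightarrow2$.

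However there is a genuine gap in the key quantitative step, and you have in fact misdiagnosed where the difficulty lies. With $\phi$ regularly varying of index $\alpha$, the derivative $\phi'$ is regularly varying of index $\alpha-1$, so for $\alpha<1$ the concavity sandwich on $(h(n),\varepsilon_n)$ gives
\[
\frac{\log\varepsilon_n-\log h(n)}{\phi'(\varepsilon_n)}\;\le\;
\int_{h(n)}^{\varepsilon_n}\frac{du}{\phi^{(n)}(u)}\;\le\;
\frac{\log\varepsilon_n-\log h(n)}{\phi'(2\varepsilon_n)}\,,
\]
and the ratio of the two bounding sequences tends to $\phi'(\varepsilon_n)/\phi'(2\varepsilon_n)\to 2^{1-\alpha}\ne1$. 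So the claimed asymptotic $\int_{h(n)}^{\varepsilon_n}du/\phi^{(n)}(u)=\bigl(|\log h(n)|/\phi'(\varepsilon_n)\bigr)(1+o(1))+O(1)$ is \emph{false} for $\alpha\in(0,1)$: the multiplicative error is a fixed constant $2^{1-\alpha}$, not $1+o(1)$. In particular, if $\int_{h(n)}^\theta du/\phi^{(n)}(u)\to L$, your split only yields
$\limsup_n|\log h(n)|/\phi'(\varepsilon_n)\le L-\int_0^\theta du/\phi(u)\le 2^{1-\alpha}\liminf_n|\log h(n)|/\phi'(\varepsilon_n)$,
which does not force the limit in assertion 1 to exist, so $2\Rightarrow1$ is not established; and conversely the limit of the integral in assertion 2 is not pinned down by assertion 1. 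You flagged $\alpha=1$ as the dangerous case, but there $\phi'$ is slowly varying and the factor $2^{1-\alpha}=1$, so in fact $\alpha<1$ is where the naive split at $\varepsilon_n$ breaks.

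The paper repairs exactly this by splitting at $d\varepsilon_n$ for a free parameter $d>0$ (see \eqref{2399}), deducing from Lemma~\ref{9331} that $\int_{d\varepsilon_n}^\theta du/\phi^{(n)}(u)\to\int_0^\theta du/\phi(u)$ (since $(1+d)^\alpha>1$), and then applying the concavity sandwich \eqref{4676} on $(h(n),d\varepsilon_n)$, whose constant factor $(d+1)^{1-\alpha}$ can be made as close to $1$ as desired. Letting $d\to0$ squeezes $\liminf$ and $\limsup$ of $|\log h(n)|/\phi'(\varepsilon_n)$ together and finishes the proof. Your argument needs this extra scaling parameter to close the gap. (A secondary, minor issue: your proposed domination $\phi^{(n)}(u)\ge\tfrac12\phi(u)$ for $u\ge\varepsilon_n$ fails at the left endpoint $u=\varepsilon_n$; the paper's Lemma~\ref{9331} obtains the tail convergence by a direct estimate of $1/\phi^{(n)}(u)-1/\phi(u+\varepsilon_n)$ instead, which you should use.)
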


\subsection{Convergence in $L_1$ and almost sure convergence}\label{8347}

We recall the usual convention $0\times\infty=\infty\times 0=0$, so that in particular, if $Y$ is any  
non negative random variable, then $Y\ind_{\{Y<\infty\}}$ is a.s.~finite. 
As we will see in this section, when $h\in\mathcal{Z}_0$, stronger convergences than weak convergence 
actually hold. Let us first notice that the time $\zeta$ admits all its moments on the set 
$\{\zeta<\infty\}$. This result is also proved in Theorem 3.1 of \cite{lz} where the moments of $\zeta$ 
are expressed in a different form, see also Theorems 1.5 and 1.6 in \cite{li} where this result is 
obtained in a particular case.
 
\begin{proposition}\label{8453}
For all $n\in\mathbb{Z}_+$ and $x>0$, $\e_x\left(\zeta^n\ind_{\{\zeta<\infty\}}\right)<\infty$. 
Moreover for all $n\in\mathbb{Z}_+$, $x>0$ and $\lambda\ge0$, 
\begin{equation*}
\e_x\left(\zeta^n\ind_{\{\zeta<\infty\}}\right) = x\int_0^\rho F(y)^n e^{-xy} dy\;\;\;\mbox{and}\;\;\;
\e_x\left(e^{-\lambda\zeta}\right) = x\int_0^\rho e^{-\lambda F(y)}e^{-xy}dy\,,
\end{equation*}
where $F(y) = \int_0^y \frac{du}{\phi(u)}$, for $y \in [0,\rho)$. In particular, 
$\displaystyle\e_x\left(\zeta\ind_{\{\zeta<\infty\}}\right)=\int_0^{\rho}
\frac{e^{-\lambda x}-e^{-\rho x}}{\phi(\lambda)}d\lambda$.
\end{proposition}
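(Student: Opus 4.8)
The plan is to obtain the law of $\zeta$ under $\p_x$ in closed form and then read off the formulas by a single change of variables, postponing the matter of finiteness; throughout, $F(\rho-):=\int_0^\rho du/\phi(u)\in(0,\infty]$ denotes the limit of $F$ at $\rho$.

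\emph{Step 1: the law of $\zeta$.} Because $\infty$ is absorbing for $Z$, one has $\{\zeta\le t\}=\{Z_t=\infty\}$ for every $t\ge 0$, so $\p_x(\zeta\le t)=1-\p_x(Z_t<\infty)$, and by monotone convergence together with (\ref{3071}),
\[\p_x(Z_t<\infty)=\lim_{\lambda\downarrow 0}\e_x\bigl(e^{-\lambda Z_t}\bigr)=\lim_{\lambda\downarrow 0}e^{-xu_t(\lambda)}=e^{-xu_t(0+)},\]
the limit $u_t(0+):=\lim_{\lambda\downarrow 0}u_t(\lambda)$ existing since $\lambda\mapsto u_t(\lambda)$ is nondecreasing. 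It thus remains to compute $u_t(0+)$, for which I would use (\ref{3453}): for $\lambda\in(0,\rho)$ the solution $t\mapsto u_t(\lambda)$ of (\ref{3451}) is nondecreasing with values in $[\lambda,\rho]$ (the interval $[0,\rho]$ being invariant), it satisfies $F(u_t(\lambda))=t+F(\lambda)$ as long as $u_t(\lambda)<\rho$, and it stabilizes at $\rho$ from time $F(\rho-)-F(\lambda)$ on. Letting $\lambda\downarrow 0$ and using that $1/\phi$ is integrable at $0$ by (\ref{8378}) identifies, with $G(t):=u_t(0+)$, the values $G(t)=F^{-1}(t)$ for $t<F(\rho-)$ and $G(t)=\rho$ for $t\ge F(\rho-)$. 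Hence $\p_x(\zeta\le t)=1-e^{-xG(t)}$; letting $t\to\infty$ recovers $\p_x(\zeta<\infty)=1-e^{-x\rho}$, and since $G$ is locally Lipschitz, $C^1$ on $(0,F(\rho-))$ with $G'=\phi\circ G$, the restriction of the law of $\zeta$ to $(0,\infty)$ is absolutely continuous with density $g(t)=x\,\phi(G(t))\,e^{-xG(t)}\,\ind_{\{t<F(\rho-)\}}$.

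\emph{Step 2: the identities.} For any nonnegative measurable $\psi$, the substitution $y=G(t)$, i.e.\ $t=F(y)$, $dt=dy/\phi(y)$ — legitimate because $G$ restricts to an increasing $C^1$ bijection of $(0,F(\rho-))$ onto $(0,\rho)$ — yields
\[\e_x\bigl(\psi(\zeta)\,\ind_{\{\zeta<\infty\}}\bigr)=\int_0^{F(\rho-)}\psi(t)\,g(t)\,dt=x\int_0^\rho\psi(F(y))\,e^{-xy}\,dy.\]
With $\psi(t)=t^n$ this reads $\e_x(\zeta^n\ind_{\{\zeta<\infty\}})=x\int_0^\rho F(y)^n e^{-xy}\,dy$; with $\psi(t)=e^{-\lambda t}$, noting $\psi(\infty)=0$ so that $\e_x(e^{-\lambda\zeta})=\e_x(e^{-\lambda\zeta}\ind_{\{\zeta<\infty\}})$, it reads $\e_x(e^{-\lambda\zeta})=x\int_0^\rho e^{-\lambda F(y)}e^{-xy}\,dy$ (valid also at $\lambda=0$, both sides then being $1-e^{-x\rho}$).

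\emph{Step 3: finiteness and the first moment.} One then checks $x\int_0^\rho F(y)^n e^{-xy}\,dy<\infty$: as $F$ is continuous on $[0,\rho)$, only the right endpoint matters. If $\rho=\infty$, concavity of $\phi$ with $\phi(0)=0$ and $\phi>0$ forces $\phi$ nondecreasing, so $F$ grows at most linearly and $e^{-xy}$ closes the integral. If $\rho<\infty$, then $\phi(\rho)=0$ and concavity gives $\phi(u)\ge\tfrac{2\phi(\rho/2)}{\rho}(\rho-u)$ on $[\rho/2,\rho]$, whence $F(y)\le C_1+C_2|\log(\rho-y)|$ near $\rho$, and $|\log(\rho-y)|^n$ is integrable at $\rho$. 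Finally, for $n=1$ I would integrate by parts in $x\int_0^\rho F(y)e^{-xy}\,dy$ using $F'=1/\phi$ and the primitive $(e^{-x\rho}-e^{-xy})/x$ of $e^{-xy}$, which vanishes at $\rho$; the boundary term at $0$ vanishes as $F(0)=0$, and that at $\rho$ vanishes since $F(y)=O(|\log(\rho-y)|)$ while $e^{-x\rho}-e^{-xy}=O(\rho-y)$ (or, when $\rho=\infty$, $F$ is linear while $e^{-x\rho}-e^{-xy}=-e^{-xy}$ decays exponentially), leaving $\int_0^\rho(e^{-xy}-e^{-x\rho})/\phi(y)\,dy$, which is the stated expression.

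\emph{Main obstacle.} The delicate step is Step 1, namely identifying $u_t(0+)$ with $F^{-1}(t)$ by letting $\lambda\downarrow 0$ in (\ref{3453}): this is exactly where the hypothesis (\ref{8378}) that $1/\phi$ be integrable at $0$ enters, and one must handle the case $F(\rho-)<\infty$, in which the flow $u_\cdot(\lambda)$ genuinely reaches $\rho$ in finite time, so that $G$ — and the distribution function of $\zeta$ — becomes constant past $F(\rho-)$. Once the law of $\zeta$ is in hand, Steps 2 and 3 are routine; the only other point needing a little care, the integrability near $\rho$ in Step 3, rests on concavity of $\phi$ rather than on (\ref{8378}).
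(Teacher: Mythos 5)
Your proof is correct and follows essentially the same route as the paper: both compute the distribution function of $\zeta$ via $\p_x(\zeta>t)=e^{-xu_t(0)}$, identify $u_t(0)$ with $F^{-1}(t)$ from the flow equation (\ref{3453}), read off the density, and change variables $y=F^{-1}(t)$. The two cosmetic differences: the paper first conditions on $\{\tau=\infty\}$ (i.e.\ works with $\p_x^\uparrow$), whereas you work directly with the defective law of $\zeta$; and for finiteness the paper invokes $\phi(u)\sim|\phi'(\rho)|(\rho-u)$, while you use a concavity bound $\phi(u)\ge \frac{2\phi(\rho/2)}{\rho}(\rho-u)$ on $[\rho/2,\rho]$, which is marginally more robust and, together with your separate treatment of $\rho=\infty$, actually covers the subordinator case that the paper's one-line finiteness argument leaves implicit. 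Your integration-by-parts derivation of the first-moment identity matches the substitution the paper alludes to.

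One small slip worth fixing: the parenthetical claim that the Laplace-transform identity holds ``also at $\lambda=0$, both sides then being $1-e^{-x\rho}$'' is not right under the paper's convention $0\times\infty=0$ (which gives $e^{-0\cdot\zeta}=1$ on $\{\zeta=\infty\}$, so $\e_x(e^{-0\cdot\zeta})=1\ne 1-e^{-x\rho}$ when $\rho<\infty$). The identity as stated is really for $\lambda>0$; at $\lambda=0$ it needs the interpretation $e^{-\lambda\zeta}\to\ind_{\{\zeta<\infty\}}$. This is arguably a wrinkle in the proposition's formulation rather than in your argument, and it does not affect the moment identities, which are the ones actually used downstream.
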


\noindent As we will show in Subsection \ref{5935}, the first passage times $\sigma_{1/h(n)}^{(n)}$ 
are integrable on the set $\{\sigma_{1/h(n)}^{(n)}<\infty\}$ and we will prove the following result. 

\begin{theorem}\label{0486}
Assume that $h\in\mathcal{Z}_0$. Then for all $x>0$ and $n\ge0$, the random variable 
$\sigma_{1/h(n)}^{(n)}\mathds{1}_{\{\sigma_{1/h(n)}^{(n)}<\infty\}}$ is integrable under $\p_x$, moreover, 
\[\sigma_{1/h(n)}^{(n)}\mathds{1}_{\{\sigma_{1/h(n)}^{(n)}<\infty\}}\xrightarrow[n\to\infty]
{\mathrm{L}^1(\p_x)}\zeta\mathds{1}_{\{\zeta<\infty\}}\,.\]
\end{theorem}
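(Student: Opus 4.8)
The plan is to deduce the $\mathrm{L}^1$ convergence from three ingredients: the weak convergence already supplied by Theorem \ref{3332} (recall that $h\in\mathcal{Z}_0$ means $c(h)=0$, so that the limit $Y\ed\zeta$), a convergence of first moments, and a convergence in $\p_x$-probability coming from the coupling of Theorem \ref{2065}; the passage from the last two to $\mathrm{L}^1$ is then the elementary identity $\e_x|W_n-W|=\e_x W_n+\e_x W-2\,\e_x(W_n\wedge W)$ together with dominated convergence, valid for non negative integrable $W_n\to W$ in probability with $\e_x W_n\to\e_x W<\infty$. \emph{Step 1 (integrability and a formula).} Through the Lamperti representation (\ref{6592}), writing $T^{(n)}_y:=\inf\{t:X^{(n)}_t\ge y\}$, one has on $\{\sigma^{(n)}_y<\infty\}$ that $\sigma^{(n)}_y=\int_0^{T^{(n)}_y}ds/X^{(n)}_s$, and $\{\sigma^{(n)}_y<\infty\}=\{T^{(n)}_y<\tau^{(n)}\}$ up to a $\p_x$-null set. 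Applying Fubini's theorem, the strong Markov property of the spectrally positive L\'evy process $X^{(n)}$ and its two-sided exit (scale function) identities — or, equivalently, writing $\sigma^{(n)}_y=\int_0^\infty\1{\sup_{s\le t}Z^{(n)}_s<y}\,dt$ and invoking the classical first passage law of the CSBP $Z^{(n)}$ — one obtains an explicit expression for $\e_x(\sigma^{(n)}_y\1{\sigma^{(n)}_y<\infty})$ as an integral over $(0,\rho_n)$ of a kernel built from $\phi^{(n)}$ and from $F_n(y)=\int_0^y du/\phi^{(n)}(u)$. In particular this expectation is finite for every $y>0$ and $n\ge0$, which is the integrability assertion (this is the content of the forthcoming Subsection \ref{5935}).

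\emph{Step 2 (convergence of the first moments).} The aim is to show that $\e_x(\sigma^{(n)}_{1/h(n)}\1{\sigma^{(n)}_{1/h(n)}<\infty})\to\e_x(\zeta\1{\zeta<\infty})=\int_0^\rho(e^{-\lambda x}-e^{-\rho x})\,d\lambda/\phi(\lambda)$, the last quantity being the one identified in Proposition \ref{8453}. Here the hypothesis $h\in\mathcal{Z}_0$ is used at full strength: by Theorem \ref{3332} it is equivalent to $\lim_n\int_{h(n)}^\theta du/\phi^{(n)}(u)=\int_0^\theta du/\phi(u)$, it entails $\phi^{(n)}\uparrow\phi$ and $\rho_n\uparrow\rho$, and it gives $u^{(n)}_t(h(n))\to F^{-1}(t)=u_t(0)$ for every $t\ge0$, where $F(y)=\int_0^y du/\phi(u)$. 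Substituting $y=1/h(n)$ in the formula of Step 1 and letting $n\to\infty$ — splitting the $\phi^{(n)}$-integral at a fixed level and using monotone/dominated convergence on each piece — yields the claimed limit. The delicate point is that the mass carried by the paths which reach $1/h(n)$ and then return to $0$ (so that $\{\sigma^{(n)}_{1/h(n)}<\infty\}$ is strictly larger than $\{\tau^{(n)}=\infty\}$) must be shown to be asymptotically negligible, and this is precisely what $c(h)=0$ guarantees.

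\emph{Step 3 (convergence in probability, and conclusion).} On $\{\zeta=\infty\}=\{\tau<\infty\}$ one has, using $X^{(n)}\le X$ and Theorem \ref{2065}, $\sup_{s}Z^{(n)}_s\le\sup_{s\le\tau}X_s<\infty$ $\p_x$-a.s., hence $\sigma^{(n)}_{1/h(n)}=\infty$ for all $n$ large and $\sigma^{(n)}_{1/h(n)}\1{\sigma^{(n)}_{1/h(n)}<\infty}=0=\zeta\1{\zeta<\infty}$ eventually. On $\{\zeta<\infty\}=\{\tau=\infty\}$, write $\sigma^{(n)}_{1/h(n)}=\sigma^{(n)}_M+(\sigma^{(n)}_{1/h(n)}-\sigma^{(n)}_M)$ for a fixed level $M$: by Theorem \ref{3789} (first passage times are continuous functionals at $\p_x$-a.e.\ path, a fixed level being a.s.\ regular for $Z$) one gets $\sigma^{(n)}_M\to\sigma^{Z}_M$ as $n\to\infty$, while $\sigma^{Z}_M\uparrow\zeta$ as $M\uparrow\infty$ since $Z_{\zeta-}=\infty$; and the Markovian increment $\sigma^{(n)}_{1/h(n)}-\sigma^{(n)}_M$, i.e.\ the time for $Z^{(n)}$ to climb from $M$ to $1/h(n)$, tends to $0$ in $\p_x$-probability (uniformly enough in $M$) precisely because $c(h)=0$, this being the very mechanism that produces the residual mass $c(h)>0$ in the critical regime. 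Combining these facts gives $\sigma^{(n)}_{1/h(n)}\1{\sigma^{(n)}_{1/h(n)}<\infty}\to\zeta\1{\zeta<\infty}$ in $\p_x$-probability, and then Step 1, Step 2 and the elementary identity quoted at the outset yield the $\mathrm{L}^1(\p_x)$ convergence. The main obstacle is Step 2 together with the formula of Step 1 it rests on: one must isolate the exact contribution of the paths on which $Z^{(n)}$ overshoots $M$, reaches $1/h(n)$, and then dies, and show that under $h\in\mathcal{Z}_0$ both this contribution and the last-stretch climbing time of Step 3 vanish asymptotically — quantitatively this is the whole point of the condition $c(h)=0$, and it is the same estimate which, when relaxed, produces the residual-mass phenomenon of Theorem \ref{3332}.
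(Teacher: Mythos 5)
Your overall strategy — establish convergence in $\p_x$-probability and convergence of first moments, then invoke the Scheff\'e-type identity $\e_x|W_n-W|=\e_x W_n+\e_x W-2\,\e_x(W_n\wedge W)$ — is a legitimate route to $\mathrm{L}^1$ convergence of nonnegative integrable variables, and it is genuinely different from the paper's method. The paper instead works directly in $\mathrm{L}^1$: starting from the Lamperti representation, it introduces the auxiliary exponential time ${\rm e}_n={\rm e}/h(n)$, computes the Laplace-transform identities of Lemma \ref{3722}, proves $\int_0^{{\rm e}_n}du/X^{(n)}_u\,\ind_{\{\tau^{(n)}=\infty\}}\to\zeta\ind_{\{\zeta<\infty\}}$ in $\mathrm{L}^1$ (Corollary \ref{1153}), shows the overshoot-correction $\int_{\tau^{(n)}_{1/h(n)}}^{{\rm e}_n}du/X^{(n)}_u\,\ind_{\{\tau^{(n)}=\infty\}}\to0$ in $\mathrm{L}^1$ (Lemmas \ref{5622}--\ref{5322}), and finally handles the discrepancy between $\ind_{\{\sigma^{(n)}_{1/h(n)}<\infty\}}$ and $\ind_{\{\tau^{(n)}=\infty\}}$ via the $h$-transform inequality (\ref{1371}). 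Convergence in probability is then a byproduct; you are proposing to get it first, which is at least as hard.

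However, as written, your proposal has concrete gaps. Step 1 asserts an explicit formula for $\e_x(\sigma^{(n)}_y\ind_{\{\sigma^{(n)}_y<\infty\}})$ via scale functions and then says ``this is the content of the forthcoming Subsection~\ref{5935}'' — that subsection is the proof of the very theorem being proved, so this is circular as justification. Step 2 is a plan (``splitting, monotone/dominated convergence''), not an argument; the quantities involved require the delicate estimates carried out in Lemmas \ref{3872} and \ref{5622}, and nothing of that sort is done. Step 3 is where the real gap lies. You correctly observe $\sigma^{(n)}_M\to\sigma^Z_M$ a.s.\ on $\{\tau=\infty\}$ (this can be seen from the Lamperti coupling of Theorem \ref{2065}), and $\sigma^Z_M\uparrow\zeta$. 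But the assertion that the increment $\sigma^{(n)}_{1/h(n)}-\sigma^{(n)}_M$ tends to $0$ in $\p_x$-probability ``uniformly enough in $M$'' is exactly the tightness statement one needs and is neither proved nor a consequence of what precedes it. Weak convergence of $\sigma^{(n)}_{1/h(n)}$ to $\zeta$ together with $\liminf_n\sigma^{(n)}_{1/h(n)}\ge\sigma^Z_M$ a.s.\ does \emph{not} imply convergence in probability (a standard counterexample: $W_n=W+n\ind_{B_n}$ with independent $B_n$, $\p(B_n)=1/n$, has $W_n\to W$ in distribution, $\liminf W_n=W$ a.s., but no convergence in probability). So the step producing ``$\to$ in $\p_x$-probability'' is unproven and non-trivial. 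Also, your explanation that the paths which reach $1/h(n)$ and then return to $0$ are negligible ``precisely because $c(h)=0$'' is misdirected: in the paper's estimate (\ref{1371}) that mass is controlled by the factor $(1-e^{-\rho_n/h(n)})^{-1}\to1$, which uses only $h(n)\to0$ and $\rho_n\to\rho>0$, not $c(h)=0$; the role of $c(h)=0$ is elsewhere, in the mean-convergence step. In short, the high-level idea could be made to work, but as it stands none of the three steps is actually carried out, and Step 3 contains a genuine logical gap.
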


\noindent It follows from Theorems \ref{3332} and \ref{0486} that for $h\in\mathcal{Z}_0$, the
convergence in law of $\left(\sigma_{1/h(n)}^{(n)}\right)$ toward $\zeta$ is equivalent to the convergence in 
$L^1$ of $\left(\sigma_{1/h(n)}^{(n)}\mathds{1}_{\{\sigma_{1/h(n)}^{(n)}<\infty\}}\right)$ toward 
$\zeta\mathds{1}_{\{\zeta<\infty\}}$.\\

We also emphasize that if $h\in\mathcal{Z}_c$, that is if 
$\sigma^{(n)}_{1/h(n)} \xrightarrow[n\to\infty]{(d)} \zeta + c(h)$, with $c(h)\in(0,\infty)$, then 
the convergence in $L^1$ of $\left(\sigma_{1/h(n)}^{(n)}\mathds{1}_{\{\sigma_{1/h(n)}^{(n)}<\infty\}}\right)$ 
toward $\zeta\mathds{1}_{\{\zeta<\infty\}}+c(h)$ would make sense only if the constant $c(h)$ can be obtained 
as a functional of the paths of the sequence $(Z^{(n)})$ and those of the process $Z$. This does not seem very
realistic and the constant $c(h)$ may just be an adjustment value that appears in the weak limit, when $h$ 
belongs to the critical domain~$\mathcal{Z}_c$. The same remark obviously holds for almost sure convergence, 
below.\\

We now state that under a stronger assumption than this of Theorem \ref{0486}, the sequence 
$\left(\sigma_{1/h(n)}^{(n)}\ind_{\{\sigma_{1/h(n)}^{(n)}<\infty\}}\right)$ converges almost surely 
toward $\zeta\ind_{\{\zeta<\infty\}}$. 

\begin{theorem}\label{1086}
Assume that $\sum \phi(\varepsilon_n)/\phi(\varepsilon_n+h(n))<\infty$. Then for all $x>0$, 
\begin{eqnarray*}
\sigma_{1/h(n)}^{(n)}\ind_{\{\sigma_{1/h(n)}^{(n)}<\infty\}}\xrightarrow[n\to\infty]{\p_x-a.s.}
\zeta\ind_{\{\zeta<\infty\}}\,.
\end{eqnarray*}
\end{theorem}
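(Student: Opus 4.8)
Let me think about what's going on. We have $h \in \mathcal{Z}$ under the hypothesis (since the summability condition $\sum \phi(\varepsilon_n)/\phi(\varepsilon_n+h(n)) < \infty$ certainly implies $\liminf \phi(h(n)+\varepsilon_n)/\phi(\varepsilon_n) = \infty > 1$, so $h \in \mathcal{Z}_0$ via the condition (2899) / Lemma 9331). So by Theorems 3332 and 0486 we already have weak convergence AND $L^1$ convergence of $\sigma^{(n)}_{1/h(n)} \1{\sigma^{(n)}_{1/h(n)}<\infty}$ to $\zeta \1{\zeta<\infty}$. To upgrade to a.s. convergence, the natural route is a Borel–Cantelli argument: show that for every $\delta > 0$,
$$\sum_n \p_x\big(|\sigma^{(n)}_{1/h(n)}\1{\sigma^{(n)}_{1/h(n)}<\infty} - \zeta\1{\zeta<\infty}| > \delta\big) < \infty,$$
which gives a.s. convergence directly. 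The summability hypothesis $\sum \phi(\varepsilon_n)/\phi(\varepsilon_n+h(n)) < \infty$ is exactly the kind of thing that should control these tail probabilities, so the first thing I would do is look for a quantitative bound on the error $\sigma^{(n)}_{1/h(n)} - \zeta$ (on the event $\{\zeta < \infty\}$, and similarly on $\{\zeta = \infty\}$) in terms of $\phi(\varepsilon_n)/\phi(\varepsilon_n + h(n))$ or a closely related quantity.

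The cleanest handle here is the Lamperti representation. Both $Z$ and $Z^{(n)}$ are built by time-change from the coupled Lévy processes $X$ and $X^{(n)} = X^{(\varepsilon_n)}$, with $X^{(n)} \le X$ and $X - X^{(n)} = \sum_{k \ge n} S^{(\varepsilon_k)}$ a subordinator whose Laplace exponent is $\sum_{k\ge n}\Gamma^{(\varepsilon_k)}$, i.e. $\varphi(\varepsilon_n + \lambda) - \varphi(\varepsilon_n) - \varphi(\lambda) \to 0$ appropriately; in any case $X - X^{(n)} \to 0$ uniformly on compacts a.s. So I would express $\sigma^{(n)}_{1/h(n)}$ and $\zeta$ through the time change $I_t$ of Lamperti and reduce the problem to: (a) on $\{\zeta < \infty\}$, controlling how long $Z^{(n)}$ spends getting from level $1/h(n)$ to $\infty$ compared to how long $Z$ takes, i.e. $\sigma^{(n)}_{1/h(n)} \le \zeta$ morally fails only by the time it takes $Z$ itself to go from $1/h(n)$ to $\infty$, which is small; and (b) controlling the fluctuation coming from $X^{(n)} \ne X$. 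For the deterministic-curve part, I expect to use the explicit first-passage formula for CSBP's: $\e_x(e^{-q\sigma^{(n)}_y}) $ is governed by the scale-type functions, and in particular the Laplace-exponent description $u^{(n)}_t$. A concrete and probably decisive computation: by the Markov property and the strong solution (3453), the time for $Z^{(n)}$ started from $1/h(n)$ to reach a high level is, up to negligible terms, $\int_{?}^{1/h(n)} du/(\text{drift})$ which connects to $\int_{h(n)}^\theta du/\phi^{(n)}(u)$ — and the difference between this and $\int_0^\theta du/\phi(u)$ is controlled by $c(h) = 0$ plus a summable remainder. I'd try to show the per-$n$ discrepancy is $O(\phi(\varepsilon_n)/\phi(\varepsilon_n+h(n)))$ (possibly after splitting the integral near $0$ and using concavity of $\phi^{(n)}$ and the bound $\phi^{(n)}(u) \ge \phi^{(n)}(h(n)) \ge c\,\phi(\varepsilon_n+h(n))/\dots$ type estimates), so that Borel–Cantelli closes.

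The main obstacle, I expect, is the randomness: $\sigma^{(n)}_{1/h(n)} - \zeta$ is not deterministic, and one has to bound its tail uniformly enough. I see two sub-issues. First, on $\{\zeta<\infty\}$ one must show $\sigma^{(n)}_{1/h(n)}$ is squeezed between $\zeta$ minus something small and $\zeta$ plus something small, both summably small; the lower bound uses that $Z^{(n)} \le$ (something close to) $Z$ pathwise near the explosion — here the non-monotonicity of $(Z^{(n)})$ noted in the text is a real annoyance, so I'd work at the level of the Lévy processes where monotonicity does hold ($X^{(n)} \le X^{(n+1)} \le X$) and push the comparison through the Lamperti time change carefully, using that $I_t$ is monotone in the driving path. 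Second, one must handle the event $\{\zeta = \infty\} = \{\tau < \infty\}$: there $Z$ dies or wanders but stays finite, $\zeta\1{\zeta<\infty} = 0$ is the wrong target for $\sigma^{(n)}_{1/h(n)}$, but $\sigma^{(n)}_{1/h(n)}\1{\sigma^{(n)}_{1/h(n)}<\infty}$ — one needs $\p_x(\sigma^{(n)}_{1/h(n)} < \infty,\ \zeta = \infty)$ to be summable, which should follow because on $\{\tau < \infty\}$ the process $X$ (hence $X^{(n)}$) hits $0$, so $Z^{(n)}$ reaching the huge level $1/h(n)$ before absorption becomes exponentially unlikely; quantitatively $\p_x(\sigma^{(n)}_{1/h(n)} < \infty) = \p_x(A^{n,\to}_\infty\text{-type event}) $ decays like $e^{-\rho_n/h(n)}$-ish times a correction, and one checks $\sum_n$ of that converges under the hypothesis. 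I would structure the final write-up as: Step 1, reduce to Borel–Cantelli; Step 2, the Lévy-process comparison and Lamperti time-change bound giving the deterministic part; Step 3, the quantitative first-passage estimate turning the hypothesis into summability of the tail probabilities on $\{\zeta<\infty\}$; Step 4, the small-probability estimate on $\{\zeta=\infty\}$; Step 5, conclude.
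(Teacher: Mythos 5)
Your overall instinct — work through the Lamperti representation, exploit the pathwise coupling $X^{(n)}\le X$ with $X-X^{(n)}$ a subordinator independent of $X^{(n)}$, and feed the summability hypothesis into a Borel--Cantelli argument — is correct, and your guess for the key estimate (that the per-$n$ discrepancy is of order $\phi(\varepsilon_n)/\phi^{(n)}(h(n))\sim\phi(\varepsilon_n)/\phi(\varepsilon_n+h(n))$, via $\e_x(\tau^{(n)}_{1/h(n)}\1{\tau^{(n)}=\infty})\le C/\phi^{(n)}(h(n))$) is exactly the one the paper proves in Lemma~\ref{67} and uses in Lemma~\ref{447}.

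Where your plan goes wrong is at the very first step. You propose to show $\sum_n\p_x\bigl(|\sigma^{(n)}_{1/h(n)}\1{\sigma^{(n)}_{1/h(n)}<\infty}-\zeta\1{\zeta<\infty}|>\delta\bigr)<\infty$ and then apply Borel--Cantelli to the whole error. But not every piece of that error has summable tails under the hypothesis. Write, as the paper does,
\[
\sigma_{1/h(n)}^{(n)}\1{\tau^{(n)}=\infty}-\zeta\1{\zeta<\infty}
=\int_0^{\tau^{(n)}_{1/h(n)}}\!\!\Bigl(\tfrac{1}{X^{(n)}_u}-\tfrac{1}{X_u}\Bigr)\1{\tau^{(n)}=\infty}\,du
+\int_0^{\tau^{(n)}_{1/h(n)}}\!\!\tfrac{du}{X_u}\bigl(\1{\tau^{(n)}=\infty}-\1{\tau=\infty}\bigr)
-\int_{\tau^{(n)}_{1/h(n)}}^\infty\!\!\tfrac{du}{X_u}\1{\tau=\infty}.
\]
The middle term is a full obstruction to your plan: it is nonzero with probability $\p_x(\tau=\infty,\tau^{(n)}<\infty)=e^{-\rho_n x}-e^{-\rho x}\sim x(\rho-\rho_n)e^{-\rho x}$, and nothing in the hypothesis $\sum\phi(\varepsilon_n)/\phi(\varepsilon_n+h(n))<\infty$ forces $\sum(\rho-\rho_n)<\infty$ (the rate $\rho-\rho_n$ depends only on $(\varepsilon_n)$ and $\phi$, not on $h$). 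The same remark applies to the final term, whose tail depends on how fast $\tau^{(n)}_{1/h(n)}\to\infty$. So a direct summation of tail probabilities of the whole error simply doesn't close.

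The paper circumvents this by \emph{not} applying Borel--Cantelli globally. It observes that the middle and last terms converge to $0$ almost surely by pure pathwise arguments: $\{\tau^{(n)}=\infty\}\uparrow\{\tau=\infty\}$ so that for a.e.\ $\omega$ the indicators agree for $n$ large, and $\tau^{(n)}_{1/h(n)}\to\infty$ together with $\int_0^\infty du/X_u\1{\tau=\infty}<\infty$ kills the tail integral, by dominated/monotone convergence. No rate is needed for these. Borel--Cantelli is invoked only for the genuinely delicate piece, the first term, and even there not on its tail directly but on the overshoot of the subordinator $Y^{(n)}=X-X^{(n)}$ at the stopping time $\tau^{(n)}_{1/h(n)}$: one bounds
\[
\int_0^{\tau^{(n)}_{1/h(n)}}\!\!\Bigl(\tfrac{1}{X^{(n)}_u}-\tfrac{1}{X_u}\Bigr)\1{\tau^{(n)}=\infty}\,du
\le \frac{Y^{(n)}_{\tau^{(n)}_{1/h(n)}}}{\inf_{u\le\tau^{(n)}_{1/h(n)}}X^{(n)}_u}\int_0^{\tau^{(n)}_{1/h(n)}}\!\!\tfrac{du}{X_u}\1{\tau^{(n)}=\infty},
\]
and shows (Lemma~\ref{447}, via the independence of $Y^{(n)}$ from $X^{(n)}$, the Laplace exponent $\varphi-\varphi^{(n)}$, and Lemma~\ref{67}) that $Y^{(n)}_{\tau^{(n)}_{1/h(n)}}\1{\tau^{(n)}=\infty}\to 0$ a.s., the summability hypothesis entering precisely here. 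The other two factors in the product converge a.s.\ to finite limits without any rate. The structural lesson you are missing is that a.s.\ convergence of a sum only requires a.s.\ convergence of each summand, and one should reserve the quantitative Borel--Cantelli bound for the single piece that actually needs it; insisting on summable tails for the full error is a strictly stronger and, here, false requirement.
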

The condition $\sum \phi(\varepsilon_n)/\phi(\varepsilon_n+h(n))<\infty$ is not a necessary condition for
the almost sure convergence of $\left(\sigma_{1/h(n)}^{(n)}\right)$ to hold. Indeed,
take for instance $\phi(\lambda)=k\lambda^{1/2}$, $\lambda\ge0$, where $k$ is some positive constant.
Take also $h(n)=\varepsilon_n=n^{-1}$. Then from Theorems \ref{2385} and \ref{0486}, there is a sequence
of integers $(n_i)_{i\ge0}$ with $n_i\rightarrow\infty$, as $i\rightarrow\infty$ such that 
$\sigma_{1/h(n_i)}^{(n_i)}$ converges almost surely toward $\zeta$, as $i\rightarrow\infty$. However 
$\sum_i\phi(\varepsilon_{n_i})/\phi(\varepsilon_{n_i}+h(n_i))=\infty$. This provides a counterexample 
with $\phi(\lambda)=k\lambda^{1/2}$, $\tilde{\varepsilon}_i:=\varepsilon_{n_i}$ and $\tilde{h}(i):=h(n_i)$.\\

From Theorem \ref{1086}, convergence in distribution of $(\sigma_{1/h(n)}^{(n)})$ holds, that is 
$h\in\mathcal{Z}_0$, under the condition $\sum \phi(\varepsilon_n)/\phi(\varepsilon_n+h(n))<\infty$. 
This can be checked directly by applying Lemma \ref{9331}, see (\ref{2899}) above.\\

We end this section with a related result that provides another way to evaluate the 
speed of convergence of the sequence $(Z^{(n)})$ toward $Z$. Let us give ourselves an 
exponentially distributed random variable ${\rm e}$ with parameter 1 
that is independent of the sequence of L\'evy processes $(X^{(n)})$. For each $n\ge0$, denote by 
$\tilde{X}^{(n)}$ the L\'evy process ${X}^{(n)}$ killed at the independent exponential time 
${\rm e}_n:={\rm e}/h(n)$. The killed L\'evy process $\tilde{X}^{(n)}$ has Laplace exponent 
$\tilde{\varphi}^{(n)}(\lambda):=\varphi^{(n)}(\lambda)-h(n)$. Then we define the sequence 
$\tilde{Z}^{(n)}:=L(\tilde{X}^{(n)})$, $n\ge0$ of branching processes obtained from the sequence 
of L\'evy processes $\tilde{X}^{(n)}$ through the Lamperti representation (\ref{2675}). The process
$\tilde{Z}^{(n)}$ has branching mechanism $\tilde{\varphi}^{(n)}$. Recall from 
Theorem \ref{5374} that $\tilde{Z}^{(n)}$ hits $\infty$ in a finite time with positive probability 
and from (\ref{2675}) this is done through a jump, that is 
$\tilde{Z}^{(n)}_{\tilde{\zeta}^{(n)}-}<\infty$, $\p_x$-a.s.~on the set 
$\{\tilde{\zeta}^{(n)}<\infty\}$, for all $x>0$, where 
$\tilde{\zeta}^{(n)}:=\inf\left\{t:\tilde{Z}^{(n)}_t=\infty\right\}$. Actually, the process 
$\tilde{Z}^{(n)}$ corresponds to the process ${Z}^{(n)}$ killed at the time 
$\int_0^{{\rm e}_n\wedge \tau^{(n)}}\frac{du}{X^{(n)}_u}$, where $\tau^{(n)}:=
\inf\left\{t: X^{(n)}_t\le0\right\}$. Moreover the hitting time $\tilde{\zeta}^{(n)}$ satisfies
\begin{equation}\label{5440}
\tilde{\zeta}^{(n)}\mathds{1}_{\{\tilde{\zeta}^{(n)}<\infty\}}=
\int_0^{{\rm e}_n}\frac{du}{X^{(n)}_u}\ind_{\{{\rm e}_n<\tau^{(n)}\}}.
\end{equation}
By reinforcing the hypothesis of Theorem \ref{1086}, we can prove that the sequence of explosion 
times $(\tilde{\zeta}^{(n)}\ind_{\{\tilde{\zeta}^{(n)}<\infty\}})$ converges almost surely 
toward $\zeta\ind_{\{\zeta<\infty\}}$.

\begin{proposition}\label{0456}
If $\sum \phi(\varepsilon_n)/h(n)<\infty$, then for all $x>0$,
\begin{eqnarray*}
\tilde{\zeta}^{(n)}\ind_{\{\tilde{\zeta}^{(n)}<\infty\}}
\xrightarrow[n\to\infty]{\p_x-a.s.}\zeta\ind_{\{\zeta<\infty\}}\,.
\end{eqnarray*}
\end{proposition}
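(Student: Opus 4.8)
The starting point is the explicit representation \eqref{5440}, which reduces the claim to controlling the sequence of random variables
\[
\tilde\zeta^{(n)}\ind_{\{\tilde\zeta^{(n)}<\infty\}}=\int_0^{{\rm e}_n}\frac{du}{X^{(n)}_u}\,\ind_{\{{\rm e}_n<\tau^{(n)}\}},
\qquad {\rm e}_n={\rm e}/h(n),
\]
together with the analogous but \emph{unkilled} quantity $\zeta\ind_{\{\zeta<\infty\}}=\int_0^\infty\frac{du}{X_u}\ind_{\{\tau=\infty\}}$ coming from the Lamperti representation of the limiting CSBP $Z$. So I would split the difference into two pieces: the discrepancy between $X^{(n)}$ and $X$ on the one hand, and the effect of truncating the time integral at ${\rm e}_n$ (and replacing $\{\tau=\infty\}$ by $\{{\rm e}_n<\tau^{(n)}\}$) on the other. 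The first piece is already under control: by Theorem~\ref{2065} the processes $X^{(n)}$ increase to $X$ uniformly on compacts a.s., and since all these processes drift to $+\infty$ and are strictly positive after time $0$ under $\p_x$, the integrands $1/X^{(n)}_u$ decrease to $1/X_u$; monotone (or dominated) convergence then handles $\int_0^T du/X^{(n)}_u\to\int_0^T du/X_u$ for each fixed $T$, and the increasing nature of the sets $\{\tau^{(n)}=\infty\}\uparrow\{\tau=\infty\}$ matches the two indicator sets.

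The real work is showing that the truncation at ${\rm e}_n$ costs nothing in the limit, i.e. that the "missing tail" $\int_{{\rm e}_n}^\infty du/X^{(n)}_u$ (on the appropriate event) tends to $0$ a.s., and this is where the Borel--Cantelli argument fed by the hypothesis $\sum\phi(\varepsilon_n)/h(n)<\infty$ enters. The plan is to estimate the expectation of the tail contribution: using the Esscher/Lamperti bookkeeping, $\int_{{\rm e}_n}^\infty du/X^{(n)}_u$ corresponds to the forward remaining explosion time of a CSBP $Z^{(n)}$ started from a random height, and via the moment formulas this expectation should be comparable to $1/\phi^{(n)\prime}(0)=1/|\varphi'(\varepsilon_n)|$, or more crudely to $\phi(\varepsilon_n)/h(n)$ after incorporating the exponential rate $h(n)$ of ${\rm e}_n$; concretely I expect a bound of the form $\e_x\big(\int_{{\rm e}_n}^\infty du/X^{(n)}_u;\ {\rm e}_n<\tau^{(n)}\big)\le C\,\phi(\varepsilon_n)/h(n)$ (or a bound on $\p_x$ of the tail exceeding a fixed $\delta$, after a Markov inequality). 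Summing over $n$ using the hypothesis and invoking Borel--Cantelli gives that this tail is eventually below any $\delta>0$, hence $\to 0$ a.s. One also has to check the complementary event $\{{\rm e}_n\ge\tau^{(n)}\}\cap\{\tau=\infty\}$ has the right behaviour: on $\{\tau=\infty\}$ one has $\tau^{(n)}=\infty$ for $n$ large (again by the monotone convergence of these sets), so ${\rm e}_n<\tau^{(n)}=\infty$ automatically for large $n$, and on $\{\tau<\infty\}$ both sides vanish for large $n$ since ${\rm e}_n\to\infty$ a.s. while $\tau^{(n)}\to\tau<\infty$ is not generally true — here I would instead note that $\tau^{(n)}\le\tau$ fails, so the cleanest route is to observe $\{\tilde\zeta^{(n)}<\infty\}=\{{\rm e}_n<\tau^{(n)}\}$ and show $\ind_{\{{\rm e}_n<\tau^{(n)}\}}\to\ind_{\{\tau=\infty\}}$ a.s. directly from the a.s. uniform convergence $X^{(n)}\to X$ and ${\rm e}_n\uparrow\infty$.

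The main obstacle I anticipate is making the tail estimate quantitative: one needs an honest upper bound on the expected remaining integral $\int_{{\rm e}_n}^\infty du/X^{(n)}_u$ in terms of $\phi(\varepsilon_n)$ and $h(n)$, uniformly in the starting point, and this requires pushing the moment computation of Proposition~\ref{8453} (or rather its analogue for $Z^{(n)}$ and its killed version $\tilde Z^{(n)}$) through carefully, together with a judicious use of the branching property / strong Markov property at time ${\rm e}_n$ to reduce to an explosion-time moment from a fresh starting height. Once that inequality is in hand, the Borel--Cantelli step and the assembly with the first ($X^{(n)}\to X$) piece are routine, and one concludes $\tilde\zeta^{(n)}\ind_{\{\tilde\zeta^{(n)}<\infty\}}\to\zeta\ind_{\{\zeta<\infty\}}$ $\p_x$-a.s.
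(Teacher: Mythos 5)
The decomposition you propose is not the one that works, and the place you want to put the Borel--Cantelli estimate is exactly where the argument breaks down. You claim the ``missing tail'' $\int_{{\rm e}_n}^\infty du/X^{(n)}_u$ (on $\{{\rm e}_n<\tau^{(n)}\}$) has expectation $\le C\,\phi(\varepsilon_n)/h(n)$. But this quantity is $+\infty$: each $X^{(n)}$ has finite positive mean $|\varphi'(\varepsilon_n)|$, so by the law of large numbers $X^{(n)}_u \sim |\varphi'(\varepsilon_n)|\,u$ as $u\to\infty$, and hence $\int_0^\infty du/X^{(n)}_u = \infty$ a.s.\ on $\{\tau^{(n)}=\infty\}$ (this is precisely why each $Z^{(n)}$ is conservative). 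So comparing the truncated integral $\int_0^{{\rm e}_n} du/X^{(n)}_u$ to the full integral $\int_0^\infty du/X^{(n)}_u$ is an $\infty-\infty$ situation, and no moment bound on the difference can exist.

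Conversely, you dismiss the ``$X^{(n)}$ versus $X$'' discrepancy as ``already under control'' by the a.s.\ uniform-on-compacts convergence of Theorem~\ref{2065}. That is not enough here, because the horizon ${\rm e}_n = {\rm e}/h(n)$ diverges along with $n$: you must compare $X^{(n)}$ and $X$ at a time that runs off to infinity as $n\to\infty$, which is precisely where the hypothesis $\sum\phi(\varepsilon_n)/h(n)<\infty$ is spent. The paper's decomposition keeps both integrals truncated at ${\rm e}_n$,
\[
\tilde\zeta^{(n)}\ind_{\{\tilde\zeta^{(n)}<\infty\}} - \int_0^{{\rm e}_n}\frac{du}{X_u}\ind_{\{\tau=\infty\}}
=\int_0^{{\rm e}_n}\Bigl(\tfrac{1}{X^{(n)}_u}-\tfrac{1}{X_u}\Bigr)\ind_{\{{\rm e}_n<\tau^{(n)}\}}
+\int_0^{{\rm e}_n}\frac{du}{X_u}\bigl(\ind_{\{{\rm e}_n<\tau^{(n)}\}}-\ind_{\{\tau=\infty\}}\bigr),
\]
then recovers $\zeta\ind_{\{\zeta<\infty\}}$ from $\int_0^{{\rm e}_n} du/X_u\,\ind_{\{\tau=\infty\}}$ by plain monotone convergence, because $\int_0^\infty du/X_u$ is finite a.s.\ on $\{\tau=\infty\}$ (since $Z$ explodes). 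The first term is bounded via $(X_{{\rm e}_n}-X^{(n)}_{{\rm e}_n})/\inf_{[0,{\rm e}_n]}X^{(n)}$ times a convergent integral, and Borel--Cantelli is applied in Lemma~\ref{447} to the subordinator increment $X_{{\rm e}_n}-X^{(n)}_{{\rm e}_n}$ evaluated at the exponential time ${\rm e}_n$; this is where $\sum\phi(\varepsilon_n)/h(n)<\infty$ is used. The indicator-correction term is handled by the eventual coincidence of the events $\{{\rm e}_n<\tau^{(n)}\}$ and $\{\tau=\infty\}$, which you do identify correctly.
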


\noindent Proposition \ref{0456} raises the question of the convergence in $L^1$ of the sequence 
$(\tilde{\zeta}^{(n)}\ind_{\{\tilde{\zeta}^{(n)}<\infty\}})$. When $X$ is a subordinator and under 
the sole assumption that $h\in\mathcal{Z}_0$, this is a direct consequence of Corollary \ref{1153}. 
The general case seems much more delicate. 

\section{Proofs}\label{1109}

\subsection{Proof of Theorem $\ref{2065}$}

Let us first write $\varphi^{(\varepsilon_n)}$ as follows, 
\begin{eqnarray*}
\varphi^{(\varepsilon_n)}(\lambda)&=&\varphi(\lambda+\varepsilon_n)-\varphi(\varepsilon_n)\\
&=&\left(a+\varepsilon_n\sigma^2+\int_{(0,\infty)}x\left(1-e^{-\varepsilon_n x}\right)\ind_{\{x<1\}}\,
\pi(dx)\right)\lambda+\frac12\sigma^2\lambda^2\\
&&+\int_{(0,\infty)}\left(e^{-\lambda x}-1+\lambda x\ind_{\{x<1\}}\right)\,e^{-\varepsilon_n x}\pi(dx)\,.
\end{eqnarray*}
This yields that the difference  
\begin{eqnarray*}
\Gamma^{(\varepsilon_n)}(\lambda)&:=&\varphi^{(\varepsilon_{n+1})}(\lambda)-
\varphi^{(\varepsilon_n)}(\lambda)\\
&=&(\varepsilon_{n+1}-\varepsilon_n)\sigma^2\lambda+\int_{(0,\infty)}\left(e^{-\lambda x}-1\right)\,
\left(e^{-\varepsilon_{n+1} x}-e^{-\varepsilon_n x}\right)\pi(dx) 
\end{eqnarray*}
is the Laplace exponent of a subordinator.\\ 

Then on the same probability space, define a L\'evy process $X^{(\varepsilon_0)}$ with Laplace 
exponent $\varphi^{(\varepsilon_0)}$ and a sequence of independent subordinators $(S^{(\varepsilon_n)})_{n\ge0}$ 
such that for each $n$, $S^{(\varepsilon_n)}$ has Laplace exponent $\Gamma^{(\varepsilon_n)}$. Assume moreover 
that $X^{(\varepsilon_0)}$ is independent of the sequence $(S^{(\varepsilon_n)})_{n\ge0}$. Then the 
sequence of L\'evy processes $X^{(\varepsilon_n)}:=X^{(\varepsilon_0)}+\sum_{k=0}^{n-1} S^{(\varepsilon_k)}$
satisfies the conditions of the statement.\\ 

Note that $\Sigma^{(n)}:=\sum_{k=0}^{n-1} S^{(\varepsilon_k)}$ is an increasing sequence of subordinators 
and that $\Sigma^{(n)}$ has Laplace exponent, $\varphi^{(\varepsilon_{n})}-\varphi^{(\varepsilon_0)}$. 
Moreover, $\lim_{n\rightarrow\infty}\varphi^{(\varepsilon_{n})}=\varphi$. 
Therefore, for each $t\ge0$, $(\Sigma^{(n)}_t)$ converges weakly toward an infinitely divisible distribution 
having $\varphi-\varphi^{(\varepsilon_0)}$ as Laplace exponent. Then for each $t\ge0$, the a.s.~limit 
$\Sigma_t=\sum_{k=0}^{\infty} S^{(\varepsilon_k)}_t$
of $\Sigma^{(n)}_t$  is a.s.~finite and has Laplace exponent $\varphi-\varphi^{(\varepsilon_0)}$. 
Moreover $(\Sigma_t)$ defines a c\`adl\`ag non decreasing process. It is actually a subordinator with Laplace 
exponent $\varphi-\varphi^{(\varepsilon_0)}$. Since for all $t\ge0$, 
$\sup_{0\le s\le t}|\Sigma_s^{(n)}-\Sigma_s|=|\Sigma_t^{(n)}-\Sigma_t|$, the sequence of subordinators
$(\Sigma^{(n)})$ converges uniformly over any closed interval of $\mathbb{R}_+$, almost surely, toward 
the process $\Sigma$. 
This implies the last assertion of Theorem \ref{2065} regarding the sequence $(X^{(\varepsilon_{n})})$. 
$\hfill\Box$

\subsection{Proof of the results in Subsection $\ref{8346}$ (convergence in distribution)}
Recall that $\phi^{(n)}=-\varphi^{(n)}$, $n\ge0$ and $\phi=-\varphi$ are the branching mechanisms 
of $Z^{(n)}$, $n\ge0$ and $Z$, respectively and that 
$(u^{(n)}_t(\lambda),\,t,\lambda\ge0)$, $n\ge0$ and $(u_t(\lambda),\,t,\lambda\ge0)$
are the corresponding Laplace exponents. Then we first state some of the elementary properties
of these Laplace exponents, a part of which can also be found in Chapter 3 of \cite{li}.

\begin{lemma}\label{9522} $\mbox{}$
\begin{itemize}
\item[$1.$] For all $t\ge0$ and $n\ge0$, the mappings $\lambda\mapsto u_t^{(n)}(\lambda)$ and 
$\lambda\mapsto u_t(\lambda)$ are continuous and increasing on $[0,\infty)$.
\item[$2.$] For all $\lambda\in(0,\rho)$, the mapping $t\mapsto u_t(\lambda)$ is continuous and increasing 
on $[0,\infty)$. Moreover it is valued in the set $(0,\rho)$. 
\item[$3.$] For all $\lambda\in(0,\rho)$, there is $n_\lambda\ge1$ such that for all $n\ge n_\lambda$, 
the mappings $t\mapsto u_t^{(n)}(\lambda)$ are continuous and increasing on $[0,\infty)$.
Moreover they are valued in the set $(0,\rho)$.
\item[$4.$] Let $\rho_0$ be the largest root of $\phi^{(0)}$. Then for all $t\ge0$ and 
$\lambda\in(0,\rho_0)$, the sequence $(u_t^{(n)}(\lambda))_{n\ge0}$ is non decreasing and 
$\lim_{n\rightarrow\infty}u_t^{(n)}(\lambda)=u_t(\lambda)$.
\end{itemize}
\end{lemma}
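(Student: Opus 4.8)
The plan is to derive everything from the explicit Laplace-exponent representation \eqref{3453}, i.e.\ $\int_{u_t(\lambda)}^\lambda du/\varphi(u)=t$ (and its analogue for $u^{(n)}_t$), together with the differential equation \eqref{3451}, $\partial_t u_t(\lambda)=-\varphi(u_t(\lambda))$. First, for assertion~1, I would use that $u_t$ is the flow of the autonomous ODE \eqref{3451}: since $u_0(\lambda)=\lambda$ and the vector field $-\varphi$ is Lipschitz on compacts, distinct starting points give distinct trajectories for all times, which yields strict monotonicity in $\lambda$; continuity in $\lambda$ follows from continuous dependence on initial conditions. The same argument applies verbatim to $u^{(n)}_t$ with $\varphi^{(n)}$ in place of $\varphi$. (Alternatively one can argue from the semigroup identity and the branching property, but the ODE route is cleanest.)

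For assertions~2 and 3, the key observation is that $\varphi<0$ precisely on $(0,\rho)$ (and $\varphi^{(n)}<0$ on $(0,\rho_n)$). So for $\lambda\in(0,\rho)$, \eqref{3451} gives $\partial_t u_t(\lambda)=-\varphi(u_t(\lambda))>0$ as long as $u_t(\lambda)\in(0,\rho)$; since $0$ and $\rho$ are both fixed points of the ODE, a trajectory started strictly inside $(0,\rho)$ can never reach either endpoint in finite time, so $u_t(\lambda)\in(0,\rho)$ for all $t\ge0$ and $t\mapsto u_t(\lambda)$ is strictly increasing and continuous (being $C^1$ by the ODE). For assertion~3 the only extra point is that, given a fixed $\lambda\in(0,\rho)$, one needs $\lambda<\rho_n$ so that the same reasoning applies to $u^{(n)}_t$; since $\phi^{(n)}\uparrow\phi$ and $\phi>0$ on $(0,\rho)$, in particular $\phi(\lambda)>0$, there is $n_\lambda$ with $\phi^{(n)}(\lambda)>0$, hence $\rho_n>\lambda$, for all $n\ge n_\lambda$. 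Then $u^{(n)}_t(\lambda)\in(0,\rho_n)\subset(0,\rho)$ by the same fixed-point trapping argument, giving the stated monotonicity, continuity, and the value range $(0,\rho)$.

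For assertion~4, fix $t\ge0$ and $\lambda\in(0,\rho_0)$. Monotonicity of the sequence $(u^{(n)}_t(\lambda))_n$: since $\phi^{(n)}\le\phi^{(n+1)}$ on the relevant interval, a comparison theorem for the ODEs $\partial_t u=-\varphi^{(n)}(u)=\phi^{(n)}(u)$ (larger nonnegative vector field, same initial value) gives $u^{(n)}_t(\lambda)\le u^{(n+1)}_t(\lambda)$; one must take a little care that all the trajectories stay inside the common domain $(0,\rho_0)\subset(0,\rho)$ where the comparison is valid, which again follows from the trapping argument of assertion~2 applied to each $\phi^{(n)}$ (note $\rho_n\ge\rho_0$). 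The sequence being non-decreasing and bounded above by $\rho$, it has a limit $\ell_t(\lambda)\le u_t(\lambda)$; to identify $\ell_t(\lambda)=u_t(\lambda)$ I would pass to the limit in the integral equation $\int_{u^{(n)}_t(\lambda)}^{\lambda} du/\varphi^{(n)}(u)=t$ using monotone/dominated convergence — here $\phi^{(n)}\uparrow\phi$ pointwise and, on the compact interval $[\ell_t(\lambda),\lambda]$ bounded away from $0$ and $\rho$, uniformly — to obtain $\int_{\ell_t(\lambda)}^{\lambda} du/\varphi(u)=t$, whence $\ell_t(\lambda)=u_t(\lambda)$ by uniqueness in \eqref{3453}. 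The main obstacle is the bookkeeping around the endpoints: ensuring the integrands in these improper-looking integrals are in fact integrable and that the limit interchange is justified uniformly on compacts bounded away from $0$ and $\rho$ — but since $\lambda<\rho_0$ keeps all trajectories uniformly inside $(0,\rho)$, this is manageable rather than genuinely hard. $\hfill\Box$
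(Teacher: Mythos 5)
Your plan is essentially correct and leads to the same conclusions, but it takes a genuinely different route from the paper for assertions~1--3: you argue through ODE flow theory (uniqueness of trajectories, fixed-point trapping, comparison theorems for ODEs), whereas the paper reads assertion~1 directly off the Laplace-transform definition \eqref{3071} and handles assertions~2--3 by combining the semigroup identity $u_{t+\delta}(\lambda)=u_t(u_\delta(\lambda))$ with the mean value theorem to show that $\partial_t u_t(\lambda)$ keeps the sign of $\phi(\lambda)$; assertion~4 is essentially the same in both, although the paper manipulates the integral identity $\int_\lambda^{u^{(n)}_t(\lambda)}du/\phi^{(n)}(u)=t$ directly instead of citing an ODE comparison theorem. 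What your approach buys is conceptual cleanliness (the trapping picture makes the role of the fixed points $0$ and $\rho$ transparent); what the paper's approach buys is that it never leans on Lipschitz regularity of the vector field, which is a real issue here.

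Indeed, there is one subtlety you should flag explicitly. Under the standing assumption \eqref{8378} the paper records that $\varphi'(0)=-\infty$, so $-\varphi$ is \emph{not} locally Lipschitz in any neighbourhood of $0$. In particular the ODE $\partial_t u=-\varphi(u)$ fails to have unique solutions from the initial value $\lambda=0$: the constant solution $u\equiv0$ coexists with the non-constant flow $u_t(0)>0$ (which is exactly the explosion phenomenon). Your ``distinct starting points give distinct trajectories'' argument for assertion~1 therefore does not apply verbatim on the closed half-line $[0,\infty)$. The fix is mild: prove strict monotonicity and continuity in $\lambda$ on $(0,\infty)$ (where trajectories stay in $(0,\rho)$ or $(\rho,\infty)$, both domains on which $\varphi$ is locally Lipschitz and backward uniqueness holds), then extend to $\lambda=0$ by taking the limit $\lambda\downarrow0$; or simply use, as the paper does, that $u_t(\lambda)=-x^{-1}\log \e_x(e^{-\lambda Z_t})$ is monotone and continuous in $\lambda$ by monotone/dominated convergence, which avoids the ODE altogether. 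For assertions~2--4 the gap is immaterial, since there $\lambda>0$ is fixed, the flow is strictly increasing and hence moves away from $0$, and the trapping at the upper end $\rho$ only uses $\varphi'(\rho)\in(0,\infty)$. Note also that the issue does not arise for the approximating Laplace exponents $u^{(n)}_t$, since $\varphi^{(n)\prime}(0)=\varphi'(\varepsilon_n)$ is finite.
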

\begin{proof}
The first assertion follows directly from the definition (\ref{3071}) as well as continuity of the mappings
in $2.$ and $3.$  

We first prove that for $\lambda\in(0,\rho)$, the mapping $t\mapsto u_t(\lambda)$ is increasing on $[0,\infty)$. 
From (\ref{3451}), $\frac{\partial u_t}{\partial t}(\lambda)|_{t=0}=\phi(u_0(\lambda))=\phi(\lambda)>0$, 
so that $u_\delta(\lambda)>u_0(\lambda)=\lambda$, for all $\delta>0$ small enough. 
Let $t>0$, then from the mean value theorem applied to the function $\lambda\mapsto u_t(\lambda)$, 
there is $\lambda_\delta\in[\lambda,u_\delta(\lambda)]$, such that,
\[u_t(u_\delta(\lambda))-u_t(\lambda)=u_{t+\delta}(\lambda)-u_t(\lambda)=
\frac{\partial}{\partial\lambda}u_t(\lambda_\delta)(u_\delta(\lambda)-\lambda),\]
where we have used the semigroup property of $u_t(\lambda)$ in the first equality. Note that both 
$u_\delta(\lambda)$ and $\lambda_\delta$ tend to $\lambda$, as $\delta$ tends to 0, hence it follows 
from the above equality that, 
\[\lim_{\delta\rightarrow0}\frac{u_{t+\delta}(\lambda)-u_t(\lambda)}{u_\delta(\lambda)-\lambda}=
\frac{\partial}{\partial\lambda}u_t(\lambda)>0\,.\]
Multiplying and dividing the left hand side of this equality by $\delta$ yields, 
\[\lim_{\delta\rightarrow0}\frac{u_{t+\delta}(\lambda)-u_t(\lambda)}{u_\delta(\lambda)-\lambda}=
\frac{\partial}{\partial t}u_t(\lambda)\left(\frac{\partial u_t}{\partial t}(\lambda)|_{t=0}\right)^{-1}>0\,.\]
Hence, $\frac{\partial}{\partial t}u_t(\lambda)$ has the same sign as 
$\frac{\partial u_t}{\partial t}(\lambda)|_{t=0}=\phi(\lambda)>0$. This proves that $t\mapsto u_t(\lambda)$ 
is increasing on $[0,\infty)$. Then it is clear that of all $\lambda\in(0,\rho)$, $u_t(\lambda)>0$.
Moreover $u_t(\lambda)<\rho$ since $\frac{\partial u_t}{\partial t}(\lambda)=\phi(u_t(\lambda))>0$.

Let $\rho_n$ be the largest root of $\phi^{(n)}$, then since $(\phi^{(n)})$ is increasing and 
$\phi^{(n)}\le\phi$, the sequence $(\rho_n)$ is increasing and $\rho_n\le \rho$. Moreover, 
$\lim_{n\rightarrow\infty}\rho_n=\rho$, so that for all $\lambda\in(0,\rho)$, there is $n_\lambda\ge1$ 
such that for all $n\ge n_\lambda$, $\lambda\in(0,\rho_n)$. Then the proof of 3.~follows this of 2.~along 
the lines.

Let us emphasize that from $2.$ and $3.$, for $n\ge n_\lambda$, $\phi^{(n)}$ has no root between 
$\lambda\in(0,\rho_n)$ and $u_t^{(n)}(\lambda)$ and $\phi$ has no root between $\lambda\in(0,\rho)$ and 
$u_t(\lambda)$. Since $\phi^{(n)}\le\phi^{(n+1)}$ for all $n$, we derive from (\ref{3453}) that for all 
$n\ge0$, $\lambda\in(0,\rho_n)$ and $t\ge0$, 
\[\int_{\lambda}^{u_t^{(n+1)}(\lambda)}\dfrac{du}{\phi^{(n+1)}(u)}=t=
\int_{\lambda}^{u_t^{(n)}(\lambda)}\dfrac{du}{\phi^{(n)}(u)}\le 
\int_{\lambda}^{u_t^{(n+1)}(\lambda)}\dfrac{du}{\phi^{(n)}(u)}\,.\]
Therefore, $u_t^{(n)}(\lambda) \le u_t^{(n+1)}(\lambda)$, for all $\lambda\in(0,\rho_n)$. On the other hand,
we derive from 
\begin{equation}\label{2067}
\int_{\lambda}^{u_t^{(n)}(\lambda)}\dfrac{du}{\phi(u)}\le\int_{\lambda}^{u_t^{(n)}(\lambda)}\dfrac{du}{\phi^{(n)}(u)}
=t=\int_{\lambda}^{u_t^{(n)}(\lambda)}\dfrac{du}{\phi(u)}+\int_{u_t^{(n)}(\lambda)}^{u_t(\lambda)}\dfrac{du}{\phi(u)}
\end{equation}
that $u^{(n)}_t(\lambda)\le u_t(\lambda)$, for all $n\ge0$ and $\lambda\in(0,\rho_n)$. Moreover, 
since $u_t(\lambda)<\rho$, we derive from (\ref{2067}) and dominated convergence theorem that for all 
$\lambda\in(0,\rho_0)$, $\lim_{n\rightarrow\infty}u_t^{(n)}(\lambda)=u_t(\lambda)$.
\end{proof}

Recall that $(h(n))_{n\ge0}$ is a decreasing sequence such that $\lim_{n\to \infty} h(n)=0$. Moreover, with no
loss of generality, we shall henceforth assume that $h(0)<\rho_0$.\

\begin{lemma}\label{3420}$\mbox{}$
\begin{itemize}
\item[$1.$] For all $t\ge0$ and $n\ge0$, $0<u^{(n)}_t(h(n))\le u_t(h(0))<\rho$. 
\item[$2.$] Assume that $(u^{(n)}_t(h(n)))_{n\ge0}$ converges for all $t\ge0$ and set
\[l_t(h):=\lim_{n\rightarrow\infty}u_t^{(n)}(h(n))\,.\]
Then the mapping $t\mapsto l_t(h)$ is continuous and non decreasing on $[0,\infty)$. In this case, we set 
\[c(h):=\sup\{t\ge0:l_t(h)=0\}\,.\]
\item[$3.$]
The sequence $(u^{(n)}_t(h(n)))_{n\ge0}$ converges for all $t\ge0$ if and only if 
$\displaystyle\lim_{n\to\infty}\int_{h(n)}^\theta\dfrac{du}{\phi^{(n)}(u)}$ exists in $[0,\infty]$ for some 
$\theta\in(0,\rho)$. In this case, 
\begin{equation}\label{4005}
\lim_{n\to\infty}\int_{h(n)}^\theta\dfrac{du}{\phi^{(n)}(u)}=c(h)+\int_0^\theta\frac{du}{\phi(u)},\quad 
\mbox{for all $\theta\in(0,\rho)$.}
\end{equation}
If $c(h)<\infty$, then for all $t\ge c(h)$,
\begin{equation*}
\lim_{n\to \infty} u_t^{(n)}(h(n))=u_{t-c(h)}(0).
\end{equation*}
\end{itemize}
\end{lemma}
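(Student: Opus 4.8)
The plan for Parts~1 and~2 is short. For Part~1, applying $(\ref{3453})$ to $\varphi^{(n)}$ at $\lambda=h(n)\in(0,\rho_n)$ gives $\int_{h(n)}^{u^{(n)}_t(h(n))}\frac{du}{\phi^{(n)}(u)}=t\ge0$, hence $u^{(n)}_t(h(n))\ge h(n)>0$; and monotonicity of $\lambda\mapsto u^{(n)}_t(\lambda)$ combined with Lemma~\ref{9522}.4 and $h(n)\le h(0)<\rho_0$ yields $u^{(n)}_t(h(n))\le u^{(n)}_t(h(0))\le u_t(h(0))<\rho$, the last inequality by Lemma~\ref{9522}.2. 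For Part~2, the point is that $t\mapsto u^{(n)}_t(h(n))$ is uniformly Lipschitz on compacts: by $(\ref{3451})$ its derivative is $\phi^{(n)}(u^{(n)}_t(h(n)))$, which is nonnegative and, using Part~1 and $\phi^{(n)}\le\phi$, bounded on $[0,T]$ by $\sup_{[0,u_T(h(0))]}\phi<\infty$. Thus each $t\mapsto u^{(n)}_t(h(n))$, hence also the pointwise limit $l_\cdot(h)$, is nondecreasing and Lipschitz on every $[0,T]$; since $l_0(h)=\lim_n h(n)=0$, the constant $c(h)=\sup\{t:l_t(h)=0\}$ is well defined.

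For Part~3 the plan is to reduce everything to the convergence of a sequence of inverse functions. Fix $\theta\in(0,\rho)$ and set $G_n(\lambda):=\int_\theta^\lambda\frac{du}{\phi^{(n)}(u)}$ on $(0,\rho_n)$ and $G(\lambda):=\int_\theta^\lambda\frac{du}{\phi(u)}$ on $(0,\rho)$. Since $\phi^{(n)}(u)\sim|\varphi'(\varepsilon_n)|\,u$ near $0$ (with $0<|\varphi'(\varepsilon_n)|<\infty$) and $u^{(n)}_t(h(n))$ stays in $(0,\rho_n)$ for all $t$ (as in Lemma~\ref{9522}), $G_n$ is an increasing bijection of $(0,\rho_n)$ onto $\mathbb{R}$, with inverse $G_n^{-1}:\mathbb{R}\to(0,\rho_n)$. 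Applying $(\ref{3453})$ to $\varphi^{(n)}$ at $\lambda=h(n)$ gives $G_n\!\big(u^{(n)}_t(h(n))\big)=t+G_n(h(n))$, i.e.
\[
u^{(n)}_t(h(n))=G_n^{-1}\big(t-a_n(\theta)\big),\qquad a_n(\theta):=\int_{h(n)}^{\theta}\frac{du}{\phi^{(n)}(u)}=-G_n(h(n)).
\]
On the limit side, $(\ref{8378})$ gives $\beta:=\int_0^\theta\frac{du}{\phi(u)}<\infty$, so $G$ extends to an increasing homeomorphism of $[0,\rho)$ onto $[-\beta,\infty)$ (the integral still diverges at $\rho$, since $\phi$ is concave, positive, and vanishes at $\rho$ if $\rho<\infty$); from $(\ref{3453})$ for $\varphi$ and the convention $u_s(0):=\lim_{\lambda\downarrow0}u_s(\lambda)$ one gets $u_s(0)=G^{-1}(s-\beta)$ for $s>0$, which is $>0$ by Theorem~\ref{5374}. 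I also set $u_s(0):=0$ for $s\le0$ and $v(s):=u_{s+\beta}(0)$, so $v$ is continuous, nondecreasing, and $v(s)=0$ exactly for $s\le-\beta$.

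The bridge will be the elementary claim that \emph{$s_n\to s\in[-\infty,\infty)$ implies $G_n^{-1}(s_n)\to v(s)$}, proved by a squeeze: monotone convergence gives $G_n(\mu)\to G(\mu)$ for every $\mu\in(0,\rho)$ (from above if $\mu>\theta$, from below if $\mu<\theta$), and then for $s>-\beta$ one traps $G_n^{-1}(s_n)$ between any $\mu^-<v(s)<\mu^+$, while for $s\le-\beta$ one picks $\mu^+\in(0,\theta)$ with $G(\mu^+)>-\beta\ge s$ and concludes $G_n^{-1}(s_n)<\mu^+$ eventually. Granting this, if $\lim_n a_n(\theta)=:A$ exists in $[0,\infty]$ then $A\ge\beta$ (because $a_n(\theta)\ge\int_{h(n)}^\theta\frac{du}{\phi(u)}\to\beta$), and writing $A=\beta+c$, the displayed formula and the squeeze (with $s_n=t-a_n(\theta)$) give $u^{(n)}_t(h(n))\to v(t-A)=u_{t-c}(0)$ for all $t$; hence the sequence converges, $l_t(h)=u_{t-c}(0)$, and $c(h)=c=A-\beta$ (since $u_s(0)=0\iff s\le0$) --- which is $(\ref{4005})$ for this $\theta$ and the last display of the lemma for $t\ge c(h)$. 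Conversely, suppose $(u^{(n)}_t(h(n)))_n$ converges for all $t$. If $l_{t_0}(h)>0$ for some $t_0$, put $\mu_n:=u^{(n)}_{t_0}(h(n))\to\mu:=l_{t_0}(h)\in(0,\rho)$; then $G_n(\mu_n)=G_n(\mu)+\int_\mu^{\mu_n}\frac{du}{\phi^{(n)}(u)}$, where $G_n(\mu)\to G(\mu)$ and the correction term vanishes ($\phi^{(n)}$ is bounded below on a fixed compact neighbourhood of $\mu$ for large $n$), so $a_n(\theta)=t_0-G_n(\mu_n)$ converges. If instead $l_t(h)\equiv0$, then $\liminf_na_n(\theta)<\infty$ would force, along a subsequence with $a_{n_k}(\theta)\to A_0\ (\ge\beta)$, that $u^{(n_k)}_{t_0}(h(n_k))\to v(t_0-A_0)>0$ at $t_0:=A_0-\beta+1$, contradicting $l_{t_0}(h)=0$; hence $a_n(\theta)\to\infty$. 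In both cases $\lim_na_n(\theta)$ exists, and the first direction then yields $(\ref{4005})$.

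Finally, the choice of $\theta$ is immaterial: $a_n(\theta')-a_n(\theta)=\int_\theta^{\theta'}\frac{du}{\phi^{(n)}(u)}\to\int_\theta^{\theta'}\frac{du}{\phi(u)}$ by monotone convergence on the compact interval between $\theta$ and $\theta'$, so existence of the limit for one $\theta\in(0,\rho)$ is equivalent to existence for all, and $(\ref{4005})$ propagates. The step I expect to be the main obstacle is exactly the non-uniformity of $G_n\to G$ near $0$ --- the $G_n$ escape to $-\infty$ at $0$ whereas $G$ is finite there, which is precisely what lets a positive constant $c(h)=A-\beta$ be created in the limit; making this clean requires the two-sided squeeze above, keeping track of the cases $s>-\beta$, $s=-\beta$ and $s=-\infty$ separately, and being consistent about the conventions $u_s(0)=u_s(0+)$ for $s>0$ versus $u_s(0)=0$ for $s\le0$, while also checking the degenerate situation $c(h)=\infty$.
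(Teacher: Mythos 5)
Your argument is correct, and both Part~2 and Part~3 are handled by routes genuinely different from the paper's. For Part~2, you establish continuity of $t\mapsto l_t(h)$ by a uniform Lipschitz bound: from $(\ref{3451})$ the derivative of $t\mapsto u^{(n)}_t(h(n))$ is $\phi^{(n)}(u^{(n)}_t(h(n)))$, and Part~1 plus $\phi^{(n)}\le\phi$ caps this on $[0,T]$ by $\sup_{[0,u_T(h(0))]}\phi<\infty$, uniformly in $n$. The paper instead applies Fatou's lemma to the identity $\int_{u_s^{(n)}(h(n))}^{u_t^{(n)}(h(n))}du/\phi^{(n)}(u)=t-s$ and gets $\int_{l_s(h)}^{l_t(h)}du/\phi(u)\le t-s$. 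Your version is more quantitative (Lipschitz rather than just continuous); the paper's is slightly shorter. For Part~3, you reformulate the semigroup relation as $u^{(n)}_t(h(n))=G_n^{-1}(t-a_n(\theta))$ with $G_n(\lambda)=\int_\theta^\lambda du/\phi^{(n)}(u)$ and reduce the whole equivalence to a squeeze lemma for the non-uniformly converging inverses $G_n^{-1}$, taking care of the escape-to-$-\infty$ at the left endpoint; this produces the constant $c(h)=A-\beta$ directly and gives the formula $l_t(h)=u_{t-c(h)}(0)$ in one stroke. The paper instead works directly with the integral identity $(\ref{6749})$: in the forward direction it uses dominated convergence for $t>c(h)$ and Fatou for the case $c(h)=\infty$; in the converse direction it rules out two subsequential limits (one positive, one zero) by a contradiction between dominated convergence and Fatou. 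Your inverse-function packaging is a bit longer to set up but makes the mechanism of "creating" the constant $c(h)$ very transparent; the paper's argument is more economical and stays closer to $(\ref{6749})$. Both are complete and rigorous.
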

\begin{proof} Since $(h(n))$ is decreasing and $h(0)\in(0,\rho_0)$, from $1.$ and $4.$ in Lemma \ref{9522}, 
for all $t\ge0$, $n\ge0$ and $k\ge n$, $u^{(n)}_t(h(n))\le u_t^{(n)}(h(0))\le u_t^{(k)}(h(0))$. Then by 
letting $k$ go to $\infty$ and applying $2.$ in Lemma \ref{9522} again, we obtain 
$u^{(n)}_t(h(n))\le u_t^{(n)}(h(0))\le u_t(h(0))$.
Since $h(0)\in(0,\rho_0)$ and $\rho_0<\rho$, assertion $2.$ in Lemma \ref{9522} implies that 
$u_t(h(0))<\rho$.

Let us now prove assertion 2. Assume that $(u^{(n)}_t(h(n)))_{n\ge0}$ converges for all $t\ge0$. Then 
from $3.$ in Lemma \ref{9522}, for all $\lambda\in(0,\rho)$ and $n$ sufficiently large, the functions 
$t\mapsto u_t^{(n)}(\lambda)$ are increasing, therefore $t\mapsto l_t(h)$ is non decreasing. 
Moreover $t\mapsto l_t(h)$ is continuous. Indeed from (\ref{3453}) applied to $\phi^{(n)}$, for all 
$n\ge0$ and $0\le s\le t$,
\[\int_{u_s^{(n)}(h(n))}^{u_t^{(n)}(h(n))}\dfrac{du}{\phi^{(n)}(u)}=t-s\,,\]
so that from Fatou's lemma and the assumption 
\[\int_{l_s(h)}^{l_t(h)}\dfrac{du}{\phi(u)}\le
\liminf_{n}\int_{u_s^{(n)}(h(n))}^{u_t^{(n)}(h(n))}\dfrac{du}{\phi^{(n)}(u)}=t-s\,.\]
The continuity of $t\mapsto l_t(h)$ follows from this.

Let us finally prove 3. Assume first that $l_t(h)=\lim_{n\rightarrow\infty}u^{(n)}_t(h(n))$ exists for all 
$t\ge0$. If $c(h)=\infty$, that is for all $t\ge0$, $l_t(h)=0$, then by Fatou's Lemma in 
\begin{equation}\label{6749}
     \int_{h(n)}^\theta \dfrac{du}{\phi^{(n)}(u)}= t + \int^\theta_{u^{(n)}_t(h(n))} 
     \dfrac{du}{\phi^{(n)}(u)}\,,\;\;\;\theta\in(0,\rho_n)\,,
\end{equation}
we obtain, for all $\theta\in(0,\rho)$, 
\begin{equation*}
\liminf_{n\to\infty}\int_{h(n)}^\theta\dfrac{du}{\phi^{(n)}(u)}\geq t+\int_0^\theta\dfrac{du}{\phi(u)}
\end{equation*}
and taking the limit when $t$ goes to infinity implies $\displaystyle\lim_{n\to\infty}  
\int_{h(n)}^\theta \dfrac{du}{\phi^{(n)}(u)}=\infty$. If $c(h)<\infty$, then let $t>c(h)$, so that 
$l_t(h)>0$. Using dominated convergence in ($\ref{6749}$)
yields for all $\theta\in(0,\rho)$, 
$\displaystyle\lim_{n\to\infty}\int_{h(n)}^\theta \dfrac{du}{\phi^{(n)}(u)} = t + 
\int_{l_t(h)}^\theta \dfrac{du}{\phi(u)}$, and letting $t$ tend to $c(h)$ in this equality, we obtain 
by continuity of the function $t\mapsto l_t(h)$ that 
$\displaystyle\lim_{n\to\infty}\int_{h(n)}^\theta \dfrac{du}{\phi^{(n)}(u)}=
c(h)+\int_0^\theta\frac{du}{\phi(u)}$. This implies that,
\begin{equation*}
    \int_0^{l_t(h)} \frac{du}{\phi(u)}= t-c(h),
\end{equation*}
for all $t\ge c(h)$, that is $l_t(h)=u_{t-c(h)}(0)$. 

Conversely, assume that 
$\displaystyle\lim_{n\to\infty}\int_{h(n)}^\theta\dfrac{du}{\phi^{(n)}(u)}$ exists in $[0,\infty]$ and 
that for some $t>0$, there are two subsequences $(v_t^{(n)})$ and 
$(w_t^{(n)})$ of $(u_t^{(n)}(h(n)))$ which converge toward $v_t>0$ and 0 respectively. Then from 
dominated convergence in (\ref{6749}) where we replaced $u_t^{(n)}(h(n))$ by $v_t^{(n)}$ we obtain,
\begin{equation}\label{0200}
\lim_{n\to\infty}\int_{h(n)}^\theta\dfrac{du}{\phi^{(n)}(u)}=t+\int_{v_t}^\theta\dfrac{du}{\phi(u)},
\end{equation}
and from Fatou's lemma in (\ref{6749}) where we replaced 
$u_t^{(n)}(h(n))$ by $w_t^{(n)}$ we obtain,
\[\lim_{n\to\infty}\int_{h(n)}^\theta\dfrac{du}{\phi^{(n)}(u)}\geq t+\int_0^\theta\dfrac{du}{\phi(u)}.\]
This is contradictory, so either any subsequence of $(u_t^{(n)}(h(n)))$ converges toward a positive 
limit, and in this case, from (\ref{0200}) these limits are identical, or  $(u_t^{(n)}(h(n)))$ converges 
toward~0. If $t=0$, then $u_t^{(n)}(h(n))=h(n)\rightarrow 0=u_0(0)$. Finally, (\ref{4005}) follows by 
taking $t=c(h)$ in (\ref{0200}), where actually $v_t=l_t(h)$ and by recalling that from part 2., 
$l_{c(h)}(h)=0$. 
\end{proof}

\begin{lemma}\label{CVPS} For all $t\ge0$ and $x\ge0$,
\[\lim_{n\rightarrow\infty}Z^{(n)}_t=Z_t\;\;\;\mbox{and}\;\;\;\lim_{n\rightarrow\infty}
\overline{Z}^{(n)}_t=\overline{Z}_t\,,\;\;\;\mbox{$\p_x$-a.s.}\]
Moreover, on the event $\{\zeta\leq t\}$,
\begin{align*}
\lim_{n\to \infty } \dfrac{Z^{(n)}_t}{\overline{Z}^{(n)}_t} = 1\,,\;\;\;\mbox{$\p_x$-a.s.}
\end{align*}
\end{lemma}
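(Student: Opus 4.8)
The plan is to split the probability space into the complementary events $\{t<\zeta\}$ and $\{\zeta\le t\}$; the cases $x=0$ and $t=0$ are trivial, so I fix $x>0$, $t>0$ and write $\overline Y_s:=\sup_{0\le r\le s}Y_r$ for running suprema (so $\overline Z^{(n)},\overline Z,\overline X^{(n)},\overline X$ have the evident meaning). On $\{t<\zeta\}$ the process $Z$ does not explode on $[0,t]$, so $(Z_s)_{s\le t}$ and all $(Z^{(n)}_s)_{s\le t}$ lie in $D([0,t],[0,\infty))$, and Theorem~\ref{3789} gives $\p_x$-a.s.\ $J_1$-convergence of $(Z^{(n)}_s)_{s\le t}$ to $(Z_s)_{s\le t}$ on this event. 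I would then use that both the endpoint evaluation $y\mapsto y(t)$ and the supremum $y\mapsto\sup_{[0,t]}y$ are $J_1$-continuous on $D[0,t]$ — if $(\lambda_n)$ are admissible time changes realizing the convergence, then $\lambda_n(t)=t$ and $\lambda_n$ is a bijection of $[0,t]$, so $y_n(t)=y_n(\lambda_n(t))\to y(t)$ and $\sup_{[0,t]}y_n=\sup_{[0,t]}(y_n\circ\lambda_n)\to\sup_{[0,t]}y$ — to conclude $Z^{(n)}_t\to Z_t$ and $\overline Z^{(n)}_t\to\overline Z_t$ there. This already covers $\{\zeta=\infty\}$, and the statement about the ratio is vacuous on $\{t<\zeta\}$.

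On $\{\zeta\le t\}\subseteq\{\zeta<\infty\}=\{\tau=\infty\}$, I would first invoke Theorem~\ref{2065} (the sets $\{\tau^{(n)}=\infty\}$ increase to $\{\tau=\infty\}$) to pick $\p_x$-a.s.\ an index $N=N(\omega)$ with $X^{(n)}_u>0$ for all $u\ge0$ whenever $n\ge N$. Since each $Z^{(n)}$ is conservative (Theorem~\ref{5374}, as $|\varphi'(\varepsilon_n)|<\infty$; see Section~\ref{9693}) and, on $\{\tau^{(n)}=\infty\}$, non-extinct, the Lamperti clock $A^{(n)}_s:=\int_0^s du/X^{(n)}_u$ is finite for all $s$ with $A^{(n)}_\infty=\infty$; hence $Z^{(n)}_t=X^{(n)}_{I^{(n)}_t}$ with $I^{(n)}_t:=\inf\{s:A^{(n)}_s>t\}<\infty$, and, $A^{(n)}$ being a continuous increasing bijection of $\mathbb R_+$, the time change $s\mapsto I^{(n)}_s$ maps $[0,t]$ onto $[0,I^{(n)}_t]$, so $\overline Z^{(n)}_t=\overline X^{(n)}_{I^{(n)}_t}$. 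By monotone convergence $A^{(n)}_s\downarrow A_s:=\int_0^s du/X_u$ for each fixed $s$ (using $X^{(n)}_u\uparrow X_u>0$), and $A$ is continuous, strictly increasing with $A_\infty=\zeta$; since $A_M<A_\infty=\zeta\le t$ for every $M$, we get $A^{(n)}_M<t$ for $n$ large, i.e.\ $I^{(n)}_t\ge M$, so $I^{(n)}_t\to\infty$. Finally $X^{(0)}$ drifts to $+\infty$ (so $\inf_{u\ge M}X^{(0)}_u\to\infty$ as $M\to\infty$) and $X^{(n)}\ge X^{(0)}$, whence $Z^{(n)}_t=X^{(n)}_{I^{(n)}_t}\ge X^{(0)}_{I^{(n)}_t}\to\infty$, so $Z^{(n)}_t\to\infty=Z_t$ and a fortiori $\overline Z^{(n)}_t\to\infty=\overline Z_t$ on $\{\zeta\le t\}$.

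It remains to prove $Z^{(n)}_t/\overline Z^{(n)}_t\to1$ on $\{\zeta\le t\}$. For $n\ge N$ this ratio equals $X^{(n)}_{\sigma_n}/\overline X^{(n)}_{\sigma_n}=1-R^{(n)}_{\sigma_n}/\overline X^{(n)}_{\sigma_n}$, with $\sigma_n:=I^{(n)}_t\to\infty$ and $R^{(n)}_s:=\overline X^{(n)}_s-X^{(n)}_s\ge0$ the reflected process. The \emph{key observation} is that $R^{(n)}$ is pathwise non-increasing in $n$: writing $X^{(n)}=X^{(0)}+\Sigma^{(n)}$ with $\Sigma^{(n)}:=X^{(n)}-X^{(0)}$ a subordinator (Theorem~\ref{2065}), one has $X^{(n)}_u-X^{(n)}_s=(X^{(0)}_u-X^{(0)}_s)+(\Sigma^{(n)}_u-\Sigma^{(n)}_s)\le X^{(0)}_u-X^{(0)}_s$ for $u\le s$, so taking the supremum over $u\le s$ gives $R^{(n)}_s\le R^{(0)}_s$; moreover $\overline X^{(n)}_s\ge\overline X^{(0)}_s$. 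Hence $0\le R^{(n)}_{\sigma_n}/\overline X^{(n)}_{\sigma_n}\le R^{(0)}_{\sigma_n}/\overline X^{(0)}_{\sigma_n}$. Now $X^{(0)}$ has finite positive mean $-\varphi'(\varepsilon_0)$ per unit time (one may assume $\varphi'<0$ on $[0,\varepsilon_0]$), so the strong law of large numbers gives $X^{(0)}_s/s\to-\varphi'(\varepsilon_0)$, hence $\overline X^{(0)}_s/s\to-\varphi'(\varepsilon_0)$, hence $R^{(0)}_s/\overline X^{(0)}_s=1-X^{(0)}_s/\overline X^{(0)}_s\to0$ as $s\to\infty$; evaluating along $\sigma_n\to\infty$ yields $R^{(n)}_{\sigma_n}/\overline X^{(n)}_{\sigma_n}\to0$, i.e.\ $Z^{(n)}_t/\overline Z^{(n)}_t\to1$ on $\{\zeta\le t\}$.

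I expect the last step to be the real obstacle: it is a genuine double limit, since both the index $n$ and the random Lamperti time $\sigma_n=I^{(n)}_t$ tend to infinity, so the $n$-dependent law-of-large-numbers rate for $X^{(n)}$ cannot be used directly; what makes it work is the monotonicity $R^{(n)}\le R^{(0)}$ together with $\overline X^{(n)}\ge\overline X^{(0)}$, which collapses the problem onto the single fixed process $X^{(0)}$. A more routine but nontrivial technical point is justifying $A^{(n)}_s<\infty$ for all $s$ and $A^{(n)}_\infty=\infty$ on $\{\tau^{(n)}=\infty\}$ (conservativeness of $Z^{(n)}$, together with the strong law) and deducing $I^{(n)}_t\to\infty$ from $A^{(n)}_s\downarrow A_s$ and $A_\infty=\zeta\le t$.
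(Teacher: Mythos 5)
Your proof is correct, and the overall strategy is the same as the paper's, but the routes differ in two concrete places.

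For the pointwise and running-sup limits, you invoke Theorem~\ref{3789} on $\{t<\zeta\}$ and use $J_1$-continuity of $y\mapsto y(t)$ and $y\mapsto\sup_{[0,t]}y$ (both legitimate once one notes that admissible time changes of $[0,t]$ fix the endpoint $t$), while the paper proves these limits directly from the Lamperti representation: it observes that $(I^{(n)}_t\wedge\tau^{(n)})_n$ is an increasing sequence of $(\mathcal F_t)$-stopping times converging to $I_t\wedge\tau$, appeals to quasi-left-continuity of $X$ (Bertoin, Ch.~I, Prop.~7) to get $X(I^{(n)}_t\wedge\tau^{(n)})\to X(I_t\wedge\tau)$, and bounds $|Z^{(n)}_t-Z_t|$ by $\sup_{s\le I_t\wedge\tau}|X^{(n)}_s-X_s|+|X_{I^{(n)}_t\wedge\tau^{(n)}}-X_{I_t\wedge\tau}|$. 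The paper's route is self-contained and does not lean on the $J_1$-convergence statement; yours is shorter but imports the external result.

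For the ratio, the key observation is identical to the paper's inequality~\eqref{2964}, namely that $X^{(n)}_s/\overline X^{(n)}_s$ is nondecreasing in $n$. Your derivation via $R^{(n)}_s=\sup_{u\le s}(X^{(n)}_u-X^{(n)}_s)$ and the subordinator increments, showing $R^{(n)}\le R^{(0)}$ together with $\overline X^{(n)}\ge\overline X^{(0)}$, is arguably cleaner than the paper's algebraic manipulation (which, as written, implicitly needs $X^{(m)}_t\ge0$, though this is harmless since it is only used for large $t$ where $X^{(0)}$ has already become positive). Where the paper then cites Lemma~2.1 of \cite{lz} for $X^{(0)}_s/\overline X^{(0)}_s\to1$, you unpack it via the strong law of large numbers, using $\e(X^{(0)}_1)=-\varphi'(\varepsilon_0)\in(0,\infty)$; both are fine, and you correctly identify that the whole point of the monotonicity is to collapse the double limit $(n,\sigma_n)$ onto the single fixed process $X^{(0)}$.

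One small technical point worth flagging: your assertion that $\overline Z^{(n)}_t=\overline X^{(n)}_{I^{(n)}_t}$ on $\{\tau^{(n)}=\infty\}$ rests on $A^{(n)}$ being a continuous increasing bijection of $[0,\infty)$ onto itself, which in turn uses both $A^{(n)}_s<\infty$ for all $s$ (follows since $\inf_u X^{(n)}_u>0$ a.s.\ on $\{\tau^{(n)}=\infty\}$, as $X^{(n)}$ drifts to $+\infty$) and $A^{(n)}_\infty=\infty$ (from the SLLN, $X^{(n)}_u=O(u)$ so $\int^\infty du/X^{(n)}_u=\infty$). You mention these, and they are indeed what is needed; the paper sidesteps this bookkeeping by never explicitly rewriting $\overline Z^{(n)}$ as a time-changed $\overline X^{(n)}$, and instead reuses the same $I^{(n)}_t\wedge\tau^{(n)}\to\infty$ argument on the explosion event.
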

\begin{proof} Fix $t\ge0$, $x\ge0$ and let $(\mathcal{F}_t)$ be the filtration generated by all the processes
$X^{(n)}$, $n\ge0$. Define also $\displaystyle I^{(n)}_t=\inf\left\{s:\int_0^s\frac{du}{X^{(n)}_u}>t\right\}$ 
and $\tau^{(n)}=\inf\{t:X^{(n)}_t\le0\}$.
Then for each $t\ge0$, the sequence $(I^{(n)}_t\wedge\tau^{(n)})$ is an increasing sequence of stopping times 
in the filtration $(\mathcal{F}_t)$, and it satisfies $\lim_n I^{(n)}_t\wedge\tau^{(n)}=I_t\wedge\tau$, 
$\p_x$-a.s. Moreover, $X$ is a L\'evy process in the filtration $(\mathcal{F}_t)$ and it is 
quasi-left-continuous. Therefore $\lim_n X(I^{(n)}_t\wedge\tau^{(n)})=X(I_t\wedge\tau)$, $\p_x$-a.s.~on the set 
$\{I_t\wedge\tau<\infty\}$, see Proposition 7 of Chapter I in \cite{be}. Then from the construction of $Z^{(n)}$ 
and $Z$, 
\begin{eqnarray}
|Z^{(n)}_t-Z_t|&\le&\sup_{s\le I_t\wedge\tau}|X^{(n)}_{s}-X_{s}|
+|X_{I^{(n)}_t\wedge\tau^{(n)}}-X_{I_t\wedge\tau}|\,,
\end{eqnarray}
and from Theorem \ref{2065}, $\lim_nZ^{(n)}_t=Z_t$, $\p_x$-a.s.~on the set $\{I_t\wedge\tau<\infty\}$.
On the set $\{I_t\wedge\tau=\infty\}$, $\lim_n I^{(n)}_t\wedge\tau^{(n)}=\infty$, $\p_x$-a.s., so that 
$\lim_n Z^{(n)}_t=\lim_n X^{(n)}(I^{(n)}_t\wedge\tau^{(n)})=\infty=Z_t$, $\p_x$-a.s. The proof 
of the fact that $\lim_{n}\overline{Z}^{(n)}_t=\overline{Z}_t=\infty$, $\p_x$-a.s.~follows the same 
arguments together with the fact that 
$\sup_{s\le t}|\overline{X}^{(n)}_{s}-\overline{X}_{s}|=\overline{X}_{t}-\overline{X}^{(n)}_{t}$, 
so that $\lim_{n}\sup_{s\le t}|\overline{X}^{(n)}_{s}-\overline{X}_{s}|=0$, $\p$-a.s. for all $t\ge0$.

Let us now prove the second assertion. First observe that for all $0\leq m\le n$ and $t\geq 0$,
\begin{equation}\label{2964}
    \dfrac{X^{(n)}_t}{\overline{X}^{(n)}_t} \geq \dfrac{X^{(m)}_t}{\overline{X}^{(m)}_t}.
\end{equation}
Indeed, write $X^{(n)}_t=X^{(m)}_t + S^{m,n}_t$, where $S^{m,n}$ is a subordinator. Then 
$\overline{X}^{(n)}_t \leq \overline{X}^{(m)}_t+ S^{m,n}_t$, so that 
 \begin{align*}
    \dfrac{X^{(n)}_t}{\overline{X}^{(n)}_t} - \dfrac{X^{(m)}_t}{\overline{X}^{(m)}_t}
    =\dfrac{\overline{X}^{(m)}_tX_t^{(n)}-\overline{X}^{(n)}_tX^{(m)}_t}{\overline{X}^{(n)}_t\overline{X}^{(m)}_t}
    &=\dfrac{\overline{X}^{(m)}_t(X^{(m)}_t + S^{m,n}_t)- (\overline{X}^{(m)}_t+ S^{m,n}_t )X^{(m)}_t}
    {\overline{X}^{(n)}_t\overline{X}^{(m)}_t}\\
    &\geq \dfrac{ S^{m,n}_t (\overline{X}_t^{(m)}-X_t^{(m)})}{\overline{X}^{(n)}_t\overline{X}^{(m)}_t} \geq 0.
\end{align*}
Recall from Subsection \ref{9693} that we choose $\varepsilon_0$ small enough so that $0<\phi^{(n)'}(0)=
\phi'(\varepsilon_n)<\infty$, for all $n\ge0$, and hence $\e(X_1^{(n)})>0$, for all $n\ge0$. Let $\delta\in(0,1)$, 
then from Lemma 2.1 in \cite{lz}, there exists $t_0>0$ such that $\p$-a.s., for all $t\geq t_0$, 
$X^{(0)}_t/\overline{X}^{(0)}_t\geq 1-\delta$ and then from (\ref{2964}) for all $n\geq 0$, 
$X^{(n)}_t/\overline{X}^{(n)}_t\geq 1-\delta$. On the other hand, note that for all $t\ge0$, 
$\{\zeta\leq t\}\subset\{I_t\wedge\tau=\infty\}$ and hence  $\lim_n I^{(n)}_t\wedge\tau^{(n)}=\infty$, $\p$-a.s. 
on the set $\{\zeta\leq t\}$. This yields, 
\begin{equation*}
   \lim_n \dfrac{Z^{(n)}_t}{\overline{Z}_t^{(n)}} =\lim_n
     \dfrac{X^{(n)}(I^{(n)}_t\wedge\tau^{(n)})} {\overline{X}^{(n)}(I^{(n)}_t\wedge\tau^{(n)})} \geq 1-\delta\,,
     \;\;\;\mbox{$\p$-a.s. on the set $\{\zeta\leq t\}$,}
\end{equation*}
which allows us to conclude, $\delta$ being arbitrary.
\end{proof}

\begin{lemma}\label{999}
Let ${\rm e}$ be some exponentially distributed random variable with parameter $1$ and assume that ${\rm e}$ 
is independent of the family $(Z^{(n)},\,n\ge0)$. Fix $t>0$ and $x>0$. 
\begin{itemize}
\item[$1.$]
If $\lim_{n\to \infty}u^{(n)}_t(h(n))$ exists, then 
$\lim_{n\to\infty}u^{(n)}_t(kh(n))=\lim_{n\to \infty}u^{(n)}_t(h(n))$, for all $k>0$.
\item[$2.$] If $\lim_{n\rightarrow\infty}\p_x(\overline{Z}_t^{(n)}\le {\rm e}/h(n))=e^{-xu_t(0)}$, then 
$\lim_{n\rightarrow\infty}\p_x(Z_t^{(n)}\le {\rm e}/h(n))=e^{-xu_t(0)}$.
\item[$3.$] $\lim_{n\to \infty}u^{(n)}_t(h(n))$ exists if and only if 
$\lim_{n\rightarrow\infty}\p_x(\overline{Z}_t^{(n)}\le k{\rm e}/h(n))$ exists for all $k>0$ and does not 
depend on $k$. In this case, for all $k>0$, 
$\lim_{n\rightarrow\infty}\p_x(\overline{Z}_t^{(n)}\le k{\rm e}/h(n))=e^{-xl_t(h)}$, where 
$l_t(h):=\lim_{n\to \infty}u^{(n)}_t(h(n))$.
\end{itemize}
\end{lemma}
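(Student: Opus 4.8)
The plan is to prove \textbf{Part 1} from the Lamperti ODE, and then to deduce \textbf{Parts 2} and \textbf{3} from \textbf{Part 1} together with a single almost sure comparison between $Z^{(n)}_t$ and its running supremum $\overline{Z}^{(n)}_t$.

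For \textbf{Part 1}, I would start from the identity already used in $(\ref{2067})$--$(\ref{6749})$: for $\lambda\in(0,\rho_n)$ one has $\int_{\lambda}^{u^{(n)}_t(\lambda)}\frac{du}{\phi^{(n)}(u)}=t$. Writing it at $\lambda=h(n)$ and at $\lambda=kh(n)$ (both in $(0,\rho_n)$ once $n$ is large) and subtracting gives
\[
\int_{h(n)\wedge kh(n)}^{h(n)\vee kh(n)}\frac{du}{\phi^{(n)}(u)}
=\left|\int_{u^{(n)}_t(h(n))}^{u^{(n)}_t(kh(n))}\frac{du}{\phi^{(n)}(u)}\right|.
\]
I would first show the left-hand side tends to $0$: since $\phi$ is concave on $(0,\infty)$, the mean value theorem gives $\phi^{(n)}(u)=\phi(\varepsilon_n+u)-\phi(\varepsilon_n)=u\,\phi'(\xi)$ with $\xi\in(\varepsilon_n,\varepsilon_n+u)$, hence $\phi^{(n)}(u)\ge u\,\phi'\bigl(\varepsilon_n+(1\vee k)h(n)\bigr)$ throughout the domain of integration, so the left-hand side is at most $|\log k|\big/\phi'\bigl(\varepsilon_n+(1\vee k)h(n)\bigr)$, which goes to $0$ because $\varepsilon_n+(1\vee k)h(n)\downarrow0$ and $\phi'(0+)=+\infty$ (i.e.\ $\varphi'(0)=-\infty$, valid under $(\ref{8378})$). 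Hence the right-hand side tends to $0$ too. To turn this into $u^{(n)}_t(kh(n))-u^{(n)}_t(h(n))\to0$, note that both sequences are bounded above by $u_t(h(0))<\rho$ (Lemma \ref{3420}, resp.\ Lemma \ref{9522}.4 applied to $kh(n)<h(0)$ for large $n$). Using $\phi^{(n)}\le\phi$ and the fact that $\int_{0}^{\varepsilon}\frac{du}{\phi(u)}>0$ for every $\varepsilon>0$, the two sequences cannot separate: either both converge to $0$, or both eventually stay in a compact subinterval $[\alpha,\beta]\subset(0,\rho)$, on which $\phi$ is bounded away from $0$ and $\infty$, whence $\bigl|u^{(n)}_t(kh(n))-u^{(n)}_t(h(n))\bigr|$ is at most a constant times the right-hand side above, hence $\to0$. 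Since $\lim_n u^{(n)}_t(h(n))$ is assumed to exist, this forces $\lim_n u^{(n)}_t(kh(n))=\lim_n u^{(n)}_t(h(n))$.

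For \textbf{Parts 2} and \textbf{3} the engine is the following. By independence of ${\rm e}$, for any nonnegative random variable $Y$ and any $c>0$ one has $\p_x(Y\le c\,{\rm e}/h(n))=\e_x[e^{-(h(n)/c)Y}]$; in particular $\p_x(Z^{(n)}_t\le k\,{\rm e}/h(n))=e^{-x\,u^{(n)}_t(h(n)/k)}$ and $\p_x(\overline{Z}^{(n)}_t\le k\,{\rm e}/h(n))=\e_x[e^{-(h(n)/k)\overline{Z}^{(n)}_t}]$. I claim that for each fixed $k>0$,
\[
\p_x\bigl(\overline{Z}^{(n)}_t\le k\,{\rm e}/h(n)\bigr)-e^{-x\,u^{(n)}_t(h(n)/k)}
=\e_x\Bigl[e^{-(h(n)/k)\overline{Z}^{(n)}_t}-e^{-(h(n)/k)Z^{(n)}_t}\Bigr]\longrightarrow 0 .
\]
Indeed the integrand tends to $0$ $\p_x$-a.s.: on $\{\zeta>t\}$ both $Z^{(n)}_t$ and $\overline{Z}^{(n)}_t$ converge to finite limits (Lemma \ref{CVPS}), so $h(n)Z^{(n)}_t$ and $h(n)\overline{Z}^{(n)}_t$ go to $0$ and both exponentials go to $1$; on $\{\zeta\le t\}$, Lemma \ref{CVPS} gives $Z^{(n)}_t/\overline{Z}^{(n)}_t\to1$, and writing $b_n=(h(n)/k)\overline{Z}^{(n)}_t$, the difference $e^{-b_n Z^{(n)}_t/\overline{Z}^{(n)}_t}-e^{-b_n}$ goes to $0$ along every subsequence (split according to whether $b_n$ is bounded or $b_n\to\infty$). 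Dominated convergence (everything is in $[0,1]$) gives the claim. Now \textbf{Part 2} is immediate with $k=1$: $\p_x(Z^{(n)}_t\le{\rm e}/h(n))=e^{-x u^{(n)}_t(h(n))}=\p_x(\overline{Z}^{(n)}_t\le{\rm e}/h(n))+o(1)\to e^{-x u_t(0)}$ under the hypothesis. For \textbf{Part 3}, the claim gives, for each $k>0$, that $\lim_n\p_x(\overline{Z}^{(n)}_t\le k\,{\rm e}/h(n))$ exists iff $\lim_n u^{(n)}_t(h(n)/k)$ exists, and then equals $e^{-x\lim_n u^{(n)}_t(h(n)/k)}$ (here $x>0$ is used); applying Part 1 to the sequence $h(\cdot)/k$, still of type $(\ref{3930})$, this limit exists for one $k$ iff for all $k$, iff $\lim_n u^{(n)}_t(h(n))=:l_t(h)$ exists, all these limits equaling $l_t(h)$. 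Chaining the equivalences yields the stated if and only if, the common limit being $e^{-x l_t(h)}$.

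The step I expect to be the real obstacle is the almost sure comparison of $\overline{Z}^{(n)}_t$ with $Z^{(n)}_t$ on the explosion event: a priori the running supremum could be much larger than the value at time $t$, and the argument is rescued only by the ratio convergence $Z^{(n)}_t/\overline{Z}^{(n)}_t\to1$ supplied by Lemma \ref{CVPS}. The other point requiring care is the vanishing of the ``error integral'' in Part 1, where the explosion hypothesis $(\ref{8378})$, through $\varphi'(0)=-\infty$, is precisely what makes the scaling constant $k$ asymptotically invisible.
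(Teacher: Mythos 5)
Your proof is correct, and it takes a genuinely different route from the paper's in two places. For Part~1, the paper reduces the claim to the convergence of $\int_{h(n)}^\theta du/\phi^{(n)}(u)$ (showing $\int_{h(n)}^{kh(n)}du/\phi^{(n)}\to0$ via the mean value theorem) and then invokes the equivalence established in part~3 of Lemma~\ref{3420} to pass back to $u^{(n)}_t$; you instead work directly with the backward ODE identity $\int_{\lambda}^{u^{(n)}_t(\lambda)}du/\phi^{(n)}(u)=t$, transfer the vanishing of the left-hand integral to the right-hand one, and then use boundedness in $[0,u_t(h(0))]$ together with $\phi^{(n)}\le\phi$ and the positivity of $\int_0^\varepsilon du/\phi$ to pin down the unique subsequential limit by Fatou. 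This is more self-contained (it does not route through Lemma~\ref{3420}) and, incidentally, avoids the slightly delicate point that Lemma~\ref{3420}.3 is stated for all $t\ge0$ whereas Lemma~\ref{999}.1 fixes a single $t$. For Parts~2 and~3, the paper works on the decomposition $\{t<\zeta\}\cup\{t\ge\zeta\}$ via~(\ref{9255}) and runs a subsequence/sandwich argument for $\limsup$ and $\liminf$ of $\p_x(\overline{Z}^{(n)}_t\le\cdot)$ versus $\p_x(Z^{(n)}_t\le\cdot)$ on the explosion event; you instead write both probabilities as Laplace transforms over the exponential randomization, show the integrand $e^{-(h(n)/k)\overline{Z}^{(n)}_t}-e^{-(h(n)/k)Z^{(n)}_t}$ tends to $0$ a.s.\ (using the ratio convergence of Lemma~\ref{CVPS} on $\{\zeta\le t\}$ and finiteness of the limits on $\{\zeta>t\}$, handling the competition between $h(n)\to0$ and $\overline{Z}^{(n)}_t\to\infty$ by passing to subsequences where $b_n$ either stays bounded or diverges), and conclude by dominated convergence. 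This collapses the two-sided sandwich into a single asymptotic identity $\p_x(\overline{Z}^{(n)}_t\le k{\rm e}/h(n))=e^{-xu^{(n)}_t(h(n)/k)}+o(1)$ from which both Parts~2 and~3 drop out immediately via Part~1. Both approaches rely on the same two external inputs --- $\varphi'(0)=-\infty$ to make the scaling constant invisible, and Lemma~\ref{CVPS} for the ratio $Z^{(n)}_t/\overline{Z}^{(n)}_t\to1$ on the explosion event --- so the novelty is organizational rather than in the ingredients, but your version is shorter and isolates the key estimate more transparently.
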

\begin{proof}
Let $\theta\in(0,\rho_0)$, so that none of the functions $\phi^{(n)}$ vanishes on $(0,\theta)$.  
Recall also that $h(0)<\rho_0$. Let $k>0$ and $n$ such that $kh(n)<\theta$ and write 
\[\int_{h(n)}^{kh(n)}\frac{du}{\phi^{(n)}(u)}=
\int_{h(n)}^\theta\frac{du}{\phi^{(n)}(u)}-
\int_{kh(n)}^\theta\frac{du}{\phi^{(n)}(u)}\,.\]
Since $\phi^{(n)}$ is increasing and differentiable, 
\begin{equation*}\label{Eq31}
\left| \int_{h(n)}^{kh(n)}\frac{du}{\phi^{(n)}(u)} \right|\le 
\frac{\left| k-1 \right| h(n)}{\phi^{(n)}((k\wedge 1)h(n))}=\frac{|k-1|}{(k\wedge 1)\phi'(\alpha_n)}\,,
\end{equation*}
where $\alpha_n\in({\varepsilon_n},{\varepsilon_n} +(k\wedge 1)h(n))$. Since $\phi'(0)=\infty$, it follows 
from the above inequality that 
$\lim_{n\rightarrow\infty}\int_{h(n)}^{kh(n)}\frac{du}{\phi^{(n)}(u)}=0$, 
so that if one of the sequences $(\int_{h(n)}^\theta\frac{du}{\phi^{(n)}(u)})$ or 
$(\int_{kh(n)}^\theta\frac{du}{\phi^{(n)}(u)})$ converges in $[0,\infty]$ as $n$ tends to infinity, 
then both converge and they have the same limit.
Then part $1.$ follows from part $3.$ of Lemma \ref{3420}.

Before proving $2.$ and $3.$, recall from Lemma \ref{CVPS} that $Z_t^{(n)}$ and $\overline{Z}_t^{(n)}$  
converge $\p_x$-almost surely to $Z_t$ and $\overline{Z}_t$, respectively. 
Therefore, for all $k>0$, 
\begin{equation}\label{9255}
\begin{array}{ll}
&\lim_{n\rightarrow\infty}\p_x(Z_t^{(n)}\le k{\rm e}/h(n),\,t<\zeta)=\p_x(Z_t<\infty,\,t<\zeta)=
\p_x(t<\zeta)=e^{-xu_t(0)}\,,\\
&\lim_{n\rightarrow\infty}\p_x(\overline{Z}_t^{(n)}\le k{\rm e}/h(n),\,t<\zeta)=
\p_x(\overline{Z}_t<\infty,\,t<\zeta)=\p_x(t<\zeta)=e^{-xu_t(0)}\,.
\end{array}
\end{equation}
Then let us prove assertion $2.$ and assume that 
$\lim_{n\rightarrow\infty}\p_x(\overline{Z}_t^{(n)}\le {\rm e}/h(n))=e^{-xu_t(0)}$. From (\ref{9255}) this 
means that $\lim_{n\rightarrow\infty}\p_x(\overline{Z}_t^{(n)}\le {\rm e}/h(n),\,t\ge\zeta)=0$.
Then let $a<1$ and write
\begin{eqnarray*}
\p_x\left(a^{-1}Z_t^{(n)}\le {\rm e}/h(n),\,t\ge\zeta\right)&\le&\p_x\left(\overline{Z}^{(n)}_t\le {\rm e}/h(n),\,
Z^{(n)}_t/\overline{Z}^{(n)}_t\in[a,1],\,t\ge\zeta\right)\\
&&+\p_x\left(Z^{(n)}_t/\overline{Z}^{(n)}_t\in[a,1]^c,t\ge\zeta\right)\,.
\end{eqnarray*}
This inequality together with Lemma \ref{CVPS} implies that 
$\lim_{n\rightarrow\infty}\p_x\left(a^{-1}Z_t^{(n)}\le {\rm e}/h(n),\,t\ge\zeta\right)=0$, so that from 
(\ref{9255}), $\lim_{n\rightarrow\infty}\p_x\left(a^{-1}Z_t^{(n)}\le {\rm e}/h(n)\right)=e^{-xu_t(0)}$.
But $\p_x\left(a^{-1}Z_t^{(n)}\le {\rm e}/h(n)\right)=\e_x(e^{-a^{-1}h(n)Z_t^{(n)}})=e^{-xu_t^{(n)}(a^{-1}h(n))}$, 
and the conclusion follows from part $1.$ of this lemma. 

Finally let us prove $3.$ Assume that $\lim_{n\rightarrow\infty}\p_x(\overline{Z}_t^{(n)}\le k{\rm e}/h(n))$ 
exists for all $k>0$ and does not depend on $k$.
Then from (\ref{9255}) $\lim_{n\rightarrow\infty}\p_x(\overline{Z}_t^{(n)}\le k{\rm e}/h(n),\,t\ge\zeta)$ exists 
for all $k>0$ and is equal to $\lim_{n\rightarrow\infty}\p_x(\overline{Z}_t^{(n)}\le {\rm e}/h(n),\,t\ge\zeta)$.
Now let $a>1$ and write
\begin{equation}\label{9543}
\p_x\left(aZ_t^{(n)}\le {\rm e}/h(n),\,\overline{Z}^{(n)}_t/Z^{(n)}_t
\in[1,a],t\ge\zeta\right)\le\p_x\left(\overline{Z}^{(n)}_t\le {\rm e}/h(n),\,t\ge\zeta\right)\,,
\end{equation}
so that from Lemma \ref{CVPS}, 
\begin{equation}\label{9544}
\limsup_{n\rightarrow\infty}\p_x\left(aZ_t^{(n)}\le {\rm e}/h(n),\,t\ge\zeta\right)\le 
\lim_{n\rightarrow\infty}\p_x\left(\overline{Z}^{(n)}_t\le {\rm e}/h(n),\,t\ge\zeta\right)\,.
\end{equation}
But since $\lim_{n\rightarrow\infty}\p_x(\overline{Z}_t^{(n)}\le k{\rm e}/h(n),\,t\ge\zeta)$ does not depend 
on $k$, replacing $h$ by $h/a$ in (\ref{9544}), we can write
\[\limsup_{n\rightarrow\infty}\p_x\left(Z_t^{(n)}\le {\rm e}/h(n),\,t\ge\zeta\right)\le 
\lim_{n\rightarrow\infty}\p_x\left(\overline{Z}^{(n)}_t\le {\rm e}/h(n),\,t\ge\zeta\right)\,.\]
Moreover, the inequality 
\[\lim_{n\rightarrow\infty}\p_x\left(\overline{Z}^{(n)}_t\le {\rm e}/h(n),\,t\ge\zeta\right)
\le\liminf_{n\rightarrow\infty}\p_x\left(Z^{(n)}_t\le {\rm e}/h(n),\,t\ge\zeta\right)\]
is straightforward. So we have proved that 
$\lim_{n\rightarrow\infty}\p_x\left(Z^{(n)}_t\le {\rm e}/h(n),\,t\ge\zeta\right)$ exists. 
We conclude from the equality, 
\begin{eqnarray}\label{1194}
\p_x\left(Z^{(n)}_t\le {\rm e}/h(n)\right)=e^{-xu_t^{(n)}(h(n))}\,.
\end{eqnarray}

Conversely assume that $\lim_{n\to \infty}u^{(n)}_t(h(n))$ exists. Then from part 1.~of the present lemma, 
$\lim_{n\to\infty}u^{(n)}_t(kh(n))=\lim_{n\to \infty}u^{(n)}_t(h(n))$, exists for all $k>0$ and therefore 
$\lim_n\p_x\left(Z^{(n)}_t\le {\rm e}/kh(n)\right)=\lim_ne^{-xu_t^{(n)}(kh(n))}$ exists for all $k>0$. Then 
we derive from  
(\ref{9255}) that $\lim_{n\rightarrow\infty}\p_x(Z_t^{(n)}\le k{\rm e}/h(n),\,t\ge\zeta)$ exists for all $k>0$
and does not depend on $k$. So from the same argument using Lemma \ref{CVPS}, developed in (\ref{9543}) and 
(\ref{9544}), for all $k>0$,
\begin{equation*}
\lim_{n\rightarrow\infty}\p_x\left(Z_t^{(n)}\le {\rm e}/h(n),\,t\ge\zeta\right)\le 
\liminf_{n\rightarrow\infty}\p_x\left(\overline{Z}^{(n)}_t\le k{\rm e}/h(n),\,t\ge\zeta\right)\,.
\end{equation*}
We conclude from the inequalities,
\begin{eqnarray*}
\limsup_{n\rightarrow\infty}\p_x\left(\overline{Z}^{(n)}_t\le k{\rm e}/h(n),\,t\ge\zeta\right)
&\le&\lim_{n\rightarrow\infty}\p_x\left(Z^{(n)}_t\le k{\rm e}/h(n),\,t\ge\zeta\right)\\
&=&\lim_{n\rightarrow\infty}\p_x\left(Z^{(n)}_t\le {\rm e}/h(n),\,t\ge\zeta\right)
\end{eqnarray*}
and the equality (\ref{1194}).
\end{proof}

\begin{lemma}\label{3233} Fix $x,t>0$, then
$\lim_{n\to \infty}u_t^{(n)}(h(n))$ exists if and only if 
$\lim_{n\rightarrow \infty}\p_x(\sigma_{k/h(n)}^{(n)}>t)$ exists for all $k>0$ and does not depend
on $k$. In this case, $\lim_{n\rightarrow \infty}\p_x(\sigma_{k/h(n)}^{(n)}>t)=e^{-xl_t(h)}$, where 
$\lim_{n\to \infty}u_t^{(n)}(h(n)):=l_t(h)$.
\end{lemma}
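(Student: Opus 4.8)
The plan is to relate the event $\{\sigma_{k/h(n)}^{(n)}>t\}$ to the event $\{\overline{Z}^{(n)}_t<k/h(n)\}$ and then invoke part $3.$ of Lemma \ref{999}, which already characterizes the existence of $\lim_n u_t^{(n)}(h(n))$ in terms of the convergence of $\p_x(\overline{Z}^{(n)}_t\le k{\rm e}/h(n))$. The starting observation is the elementary identity $\{\sigma_{y}^{(n)}>t\}=\{\overline{Z}^{(n)}_t<y\}$ (up to the boundary value $\overline{Z}^{(n)}_t=y$, which is negligible here since we will integrate against an absolutely continuous law), where $\overline{Z}^{(n)}_t=\sup_{s\le t}Z^{(n)}_s$ is the running supremum already used in Lemma \ref{CVPS}. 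Setting $y=k/h(n)$ gives $\p_x(\sigma_{k/h(n)}^{(n)}>t)=\p_x(\overline{Z}^{(n)}_t<k/h(n))$, so the task is to pass from the deterministic level $k/h(n)$ to the randomized level $k{\rm e}/h(n)$ appearing in Lemma \ref{999}.

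First I would observe that, conditioning on the independent exponential variable ${\rm e}$ and using Fubini,
\[
\p_x\bigl(\overline{Z}^{(n)}_t\le k{\rm e}/h(n)\bigr)
=\int_0^\infty e^{-s}\,\p_x\bigl(\overline{Z}^{(n)}_t\le ks/h(n)\bigr)\,ds
=\int_0^\infty e^{-s}\,\p_x\bigl(\sigma^{(n)}_{ks/h(n)}\ge t\bigr)\,ds.
\]
So the randomized first-passage probability is a fixed averaging (an integral transform with a strictly positive kernel, independent of $n$) of the deterministic ones over all levels. This makes one implication immediate: if $\lim_n\p_x(\sigma^{(n)}_{k/h(n)}>t)$ exists for every $k>0$ and does not depend on $k$, say equals $\ell$, then $\p_x(\sigma^{(n)}_{ks/h(n)}>t)=\p_x(\sigma^{(n)}_{(ks)/h(n)}>t)\to\ell$ for every fixed $s>0$ as well (apply the hypothesis with $k$ replaced by $ks$), and dominated convergence in the integral above yields $\lim_n\p_x(\overline{Z}^{(n)}_t\le k{\rm e}/h(n))=\ell$ for all $k$; in particular this limit does not depend on $k$, so Lemma \ref{999}(3) gives the existence of $l_t(h):=\lim_n u_t^{(n)}(h(n))$ and the identification $\ell=e^{-xl_t(h)}$.

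For the converse, suppose $l_t(h):=\lim_n u_t^{(n)}(h(n))$ exists. By Lemma \ref{999}(1), $\lim_n u_t^{(n)}(kh(n))=l_t(h)$ for every $k>0$, hence $\p_x(\overline{Z}^{(n)}_t<k/h(n))$ should converge; to make this rigorous I would use that $\p_x(\overline{Z}^{(n)}_t\le y)$ is monotone in $y$ and sandwich it between Laplace transforms: for any $\delta>1$,
\[
\e_x\bigl(e^{-(h(n)/k)\overline{Z}^{(n)}_t}\bigr)-e^{-\delta}
\le \p_x\bigl(\overline{Z}^{(n)}_t\le k/h(n)\bigr)
\le e\cdot \e_x\bigl(e^{-(h(n)/k)\overline{Z}^{(n)}_t}\bigr),
\]
(the left bound by discarding $\{\overline{Z}^{(n)}_t> k/h(n)\}$ where the integrand is $\le e^{-1}$, crudely; more cleanly, one runs the exact argument of Lemma \ref{999}(3) with the deterministic level in place of the exponential one, using the two-sided comparison $\overline{Z}^{(n)}_t/Z^{(n)}_t\to 1$ on $\{\zeta\le t\}$ from Lemma \ref{CVPS} together with the decomposition according to $\{t<\zeta\}$ versus $\{t\ge\zeta\}$ exactly as in \eqref{9255}). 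The point is that on $\{t<\zeta\}$ one has $\overline{Z}^{(n)}_t\to\overline{Z}_t<\infty$ so $\p_x(\overline{Z}^{(n)}_t<k/h(n),\,t<\zeta)\to\p_x(t<\zeta)=e^{-xu_t(0)}$, independently of $k$, while on $\{t\ge\zeta\}$ the ratio argument of Lemma \ref{CVPS} transfers the convergence of $\p_x(\overline{Z}^{(n)}_t<k/h(n),t\ge\zeta)$ between different values of $k$; combined with $\e_x(e^{-\lambda\overline{Z}^{(n)}_t})\to e^{-xu_t(0)}+\e_x(e^{-\lambda Z_t};t\ge\zeta)$-type control this forces $\lim_n\p_x(\sigma^{(n)}_{k/h(n)}>t)=\lim_n\p_x(\overline{Z}^{(n)}_t<k/h(n))$ to exist for every $k>0$ and to equal $e^{-xl_t(h)}$ in view of $\p_x(Z^{(n)}_t\le k/h(n))=e^{-xu_t^{(n)}(kh(n))}\to e^{-xl_t(h)}$ and the squeeze $Z^{(n)}_t\le\overline{Z}^{(n)}_t$.

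The main obstacle is precisely this last transfer: passing from the running supremum $\overline{Z}^{(n)}_t$ to the marginal $Z^{(n)}_t$ (whose Laplace transform is the explicit $e^{-xu_t^{(n)}}$) on the explosion event $\{t\ge\zeta\}$. This is exactly the difficulty already handled in Lemma \ref{CVPS} and exploited in Lemma \ref{999}, so I expect the proof to consist mainly of replaying the estimates \eqref{9543}--\eqref{9544} with the deterministic level $k/h(n)$ in place of $k{\rm e}/h(n)$, plus the Fubini observation above to close the loop with Lemma \ref{999}(3); no genuinely new idea is needed beyond assembling these pieces.
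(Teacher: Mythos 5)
Your high-level plan is the right one (connect $\sigma^{(n)}_{k/h(n)}$ to the running supremum $\overline{Z}^{(n)}_t$, then reduce to Lemma~\ref{999} via exponential randomization), and your Fubini observation
\[
\p_x\bigl(\overline{Z}^{(n)}_t\le k{\rm e}/h(n)\bigr)
=\int_0^\infty e^{-s}\,\p_x\bigl(\sigma^{(n)}_{ks/h(n)}>t\bigr)\,ds
\]
is exactly the identity the paper builds on. Your treatment of the direction ``$\lim_n\p_x(\sigma^{(n)}_{k/h(n)}>t)$ exists and is $k$-independent $\Rightarrow$ $\lim_n u_t^{(n)}(h(n))$ exists'' via dominated convergence in this integral is correct and arguably cleaner than the paper's.

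The gap is in the other direction. You claim that from $\lim_n u_t^{(n)}(kh(n))=l_t(h)$ (Lemma~\ref{999}(1)) it follows that $\p_x(\overline{Z}^{(n)}_t<k/h(n))$ ``should converge,'' and you try to back this up with a sandwich $\e_x(e^{-(h(n)/k)\overline{Z}^{(n)}_t})-e^{-\delta}\le\p_x(\overline{Z}^{(n)}_t\le k/h(n))\le e\cdot\e_x(e^{-(h(n)/k)\overline{Z}^{(n)}_t})$. This does not close: the multiplicative/additive constants $e$ and $e^{-\delta}$ do not improve as $n\to\infty$, so at best you trap the limit in a fixed window, not pin it down. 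Moreover, the object $\e_x(e^{-\lambda\overline{Z}^{(n)}_t})$ on which your bound rests is not explicit --- only $\e_x(e^{-\lambda Z^{(n)}_t})=e^{-xu^{(n)}_t(\lambda)}$ is --- and the step you describe as ``running the exact argument of Lemma~\ref{999}(3) with a deterministic level in place of the exponential one'' cannot be done verbatim: the exponential randomization is what produces the Laplace transform identity $\p_x(Z^{(n)}_t\le{\rm e}/\lambda)=\e_x(e^{-\lambda Z^{(n)}_t})$ in the first place, so it cannot be dropped. What the paper actually does is squeeze the deterministic probability between two exponentially randomized ones using the monotonicity of $z\mapsto\p_x(\sigma^{(n)}_{z/h(n)}>t)$: for all $\delta,d>0$,
\[
\p_x\bigl(\sigma^{(n)}_{\delta/h(n)}>t\bigr)e^{-\delta}
\le\p_x\bigl(\sigma^{(n)}_{{\rm e}/h(n)}>t\bigr)
\le(1-e^{-d})\p_x\bigl(\sigma^{(n)}_{d/h(n)}>t\bigr)+e^{-d},
\]
and then, crucially, exploits the $k$-independence supplied by Lemma~\ref{999}(3): after rescaling $h$ appropriately, the central term has a known limit $e^{-xl_t(h)}$ for every $k>0$, and one can let $\delta\to0$ and $d\to\infty$ afterwards to eliminate the slack constants. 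This rescaling-then-limiting step is the missing ingredient in your sketch; your raw sandwich with fixed $\delta$ has the same structure but never uses the freedom in $k$ to pass to the tight bound, and the ``replay~\eqref{9543}--\eqref{9544}'' remark does not supply it either, since those estimates are specifically tailored to the exponential level.
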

\begin{proof} Let ${\rm e}$ be as in Lemma $\ref{999}$ and note that 
$z\mapsto\p_x(\sigma_{z/h(n)}^{(n)}>t)$ is increasing. Then on the one hand, for all $\delta>0$, 
\begin{eqnarray}
\p_x(\sigma_{\delta/h(n)}^{(n)}>t)e^{-\delta}&\le&\int_\delta^\infty 
\p_x(\sigma_{z/h(n)}^{(n)}>t)e^{-z}\,dz\nonumber\\
&\le&\int_0^\infty 
\p_x(\sigma_{z/h(n)}^{(n)}>t)e^{-z}\,dz=
\p_x(\sigma_{{\rm e}/h(n)}^{(n)}>t)\,,\label{2564}
\end{eqnarray}
and on the other hand, for all $d>0$,
\begin{eqnarray}
\p_x(\sigma_{{\rm e}/h(n)}^{(n)}>t)&\le&
\p_x\left(\sigma_{{\rm e}/h(n)}^{(n)}>t,\,{\rm e}/h(n)<d/h(n)\right)+
\p_x\left({\rm e}/h(n)>d/h(n)\right)\nonumber\\
&=&\int_0^d\p_x(\sigma_{z/h(n)}^{(n)}>t)e^{-z}\,dz+\int_d^\infty e^{-z}\,dz\nonumber\\
&\le&(1-e^{-d})\p_x(\sigma_{d/h(n)}^{(n)}>t)+e^{-d}\,.\label{2565}
\end{eqnarray}

Then assume that $\lim_{n\to \infty}u_t^{(n)}(h(n))$ exists and set $\lim_{n\to \infty}u_t^{(n)}(h(n)):=l_t(h)$.
Recall that ${\rm e}$ is as in Lemma $\ref{999}$ and note that 
$\p_x(\sigma_{{\rm e}/h(n)}^{(n)}>t)=\p_x(\overline{Z}_t^{(n)}<{\rm e}/h(n))$. Then by part 3.~of Lemma 
\ref{999}, for all $k>0$,
\begin{equation}\label{2566}
\lim_{n\rightarrow\infty}\p_x(\sigma_{{\rm e}/h(n)}^{(n)}>t)=\lim_{n\rightarrow\infty}
\p_x(\sigma_{k{\rm e}/h(n)}^{(n)}>t)=e^{-xl_t(h)}\,.
\end{equation} 
Thanks to (\ref{2566}), replacing $h$ by $kh/\delta$  (resp.~by $kh/d$) in (\ref{2564}) 
(resp.~in (\ref{2565})), we obtain that for all $\delta,d,k>0$,  
\[e^{-\delta}\limsup_{n\rightarrow0}\p_x(\sigma_{k/h(n)}^{(n)}>t)\le e^{-xl_t(h)}\le 
(1-e^{-d})\liminf_{n\rightarrow0}\p_x(\sigma_{k/h(n)}^{(n)}>t)+e^{-d}\,.\]
Taking $\delta$ to 0 and $d$ to $\infty$ shows that $\lim_{n\to \infty}\p_x(\sigma_{k/h(n)}^{(n)}>t)=e^{-xl_t(h)}$.

Conversely if $\lim_{n\rightarrow \infty}\p_x(\sigma_{k/h(n)}^{(n)}>t)$ exists for all $k>0$
and does not depend on $k$, then replacing $h$ by $h/k$ in (\ref{2564}) and (\ref{2565}), we obtain,
\begin{eqnarray*}
&&\lim_{n\rightarrow \infty}\p_x(\sigma_{1/h(n)}^{(n)}>t)e^{-\delta}\,
\le\liminf_{n\rightarrow\infty}\p_x(\sigma_{k{\rm e}/h(n)}^{(n)}>t)\le
\limsup_{n\rightarrow\infty}\p_x(\sigma_{k{\rm e}/h(n)}^{(n)}>t)\\
&&\le(1-e^{-d})\lim_{n\rightarrow \infty}\p_x(\sigma_{1/h(n)}^{(n)}>t)+e^{-d}\,.
\end{eqnarray*}
Taking $\delta$ to 0 and $d$ to $\infty$ shows that 
$\lim_{n\rightarrow\infty}\p_x(\sigma_{k{\rm e}/h(n)}^{(n)}>t)$ exists for all $k>0$ and does not 
depend on $k$. We conclude from part $2.$ of Lemma $\ref{999}$.
\end{proof}

\noindent {\it Proof of Theorem $\ref{3332}$}. Equivalence between $1.$ and $3.$ is given by Lemma 
\ref{3233}. Equivalence between $2.$ and $3.$ is given by $3.$ in Lemma \ref{3420}. Then still from 
$3.$ in Lemma \ref{3420} and Lemma \ref{3233}, for all $x,t\ge0$,
$\lim_{n\rightarrow\infty}\p_x(\sigma_{1/h(n)}^{(n)}>t)=\lim_{n\rightarrow\infty}
e^{-xu_t^{(n)}(h(n))}=e^{-u_{(t-c(h))^+}(0)}=\p_x(\zeta + c(h) > t)$, which achieves the 
proof of the theorem. $\hfill\Box$\\

\noindent {\it Proof of Proposition $\ref{0573}$}.  This proof follows from $2.$ and $3.$ in Lemma 
\ref{999} and the fact that $\p_x(Z_t^{(n)}\le {\rm e}/h(n))=e^{-xu_t^{(n)}(h(n))}$. $\hfill\Box$\\

\noindent {\it Proof of Proposition $\ref{1442}$}. 
Assume that 
$\sigma^{(n)}_{1/h(n)} \xrightarrow[n\to\infty]{(d)} \zeta$, that is, for all 
$t>0$,  $\lim_{n\to \infty}\p_x( \overline{Z}_t^{(n)} \leq 1/h(n) )=e^{-xu_t(0)}$. 
Note that we can replace the exponential time ${\rm e}$ by $1$ in (\ref{9255}) so that for 
all decreasing sequence $(f(n))$ such that $\lim_{n\rightarrow\infty}f(n)=0$,
\begin{equation}\label{1256}
\lim_{n\rightarrow\infty}\p_x(\overline{Z}_t^{(n)}\le 1/f(n),\,t<\zeta)=
\p_x(\overline{Z}_t<\infty,\,t<\zeta)=\p_x(t<\zeta)=e^{-xu_t(0)}.
\end{equation}
From the assumption and (\ref{1256}) applied to $f=h$, 
$\lim_{n\rightarrow\infty}\p_x(\overline{Z}_t^{(n)}\le 1/h(n),\,t\geq\zeta)= 0$.
Moreover, since $h(n)\le\tilde{h}(n)$, for all $n$ large enough,
\begin{equation*}
\p_x(\overline{Z}_t^{(n)}\le 1/\tilde{h}(n),\,t\geq\zeta) \leq 
\p_x(\overline{Z}_t^{(n)}\le 1/h(n),\,t\geq\zeta)
\end{equation*}
so that the left hand side of this inequality tends to $0$ when $n$ goes to infinity. Applying this 
together with (\ref{1256}) for $f=\tilde{h}$ gives
$\sigma^{(n)}_{1/\tilde{h}(n)} \xrightarrow[n\to\infty]{(d)} \zeta$. $\hfill\Box$\\ 

\noindent {\it Proof of Proposition $\ref{1332}$}. 
Let $(\tilde{h(n)})$ be such a sequence. Then there are $k_1,k_2>0$ such that for all $n$ sufficiently 
large, $k_1 h(n)\le \tilde{h}(n)\le k_2 h(n)$, so that from $4.$ in Lemma \ref{9522}, for all $t\ge0$,
$u^{(n)}_t(k_1 h(n))\le u^{(n)}_t( \tilde{h}(n))\le u^{(n)}_t(k_2 h(n))$. Then assertion $1.$ follows 
from Theorem \ref{3332}.
To prove the second assertion it suffices to note that 
$\lim_{n\rightarrow\infty}\int_{h(n)}^{\tilde{h}(n)}\frac{du}{\phi^{(n)}(u)}=c(h)-c(\tilde{h})\ge0$.
Then to prove part $3.$ let us write, for $\theta\in(0,\rho)$,
\begin{equation}\label{2577}
\int_{h(n)}^{\theta}\frac{du}{\phi^{(n)}(u)}-\int_{h(n)}^{\tilde{h}(n)}\frac{du}{\phi^{(n)}(u)}
=\int_{\tilde{h}(n)}^\theta\frac{du}{\phi^{(n)}(u)}\,,
\end{equation}
so that taking the liminf on each side gives,
\[\lim_{n\rightarrow\infty}\int_{h(n)}^{\theta}\frac{du}{\phi^{(n)}(u)}-
\limsup_{n\rightarrow\infty}\int_{h(n)}^{\tilde{h}(n)}\frac{du}{\phi^{(n)}(u)}
=\liminf_{n\rightarrow\infty}\int_{\tilde{h}(n)}^\theta\frac{du}{\phi^{(n)}(u)}\,.\]
But $h\in\mathcal{Z}_0$, so that
$\lim_{n\rightarrow\infty}\int_{h(n)}^{\theta}\frac{du}{\phi^{(n)}(u)}=
\int_{0}^{\theta}\frac{du}{\phi(u)}$, moreover from Fatou's lemma,\\ 
$\liminf_{n\rightarrow\infty}\int_{\tilde{h}(n)}^\theta\frac{du}{\phi^{(n)}(u)}\ge
\int_{0}^{\theta}\frac{du}{\phi(u)}$, which shows that 
$\lim_{n\rightarrow\infty}\int_{h(n)}^{\tilde{h}(n)}\frac{du}{\phi^{(n)}(u)}=0$ 
and finally from (\ref{2577}),
$\lim_{n\rightarrow\infty}\int_{\tilde{h}(n)}^\theta\frac{du}{\phi^{(n)}(u)}=
\int_{0}^{\theta}\frac{du}{\phi(u)}$. We conclude from Theorem \ref{3332}.
For the last part, note that since $c(h)=\infty$, for all $t\ge0$, $l_t(h)=0$. Then recall from 
Lemma \ref{9522} that $\lambda\mapsto u_t^{(n)}(\lambda)$ is increasing so that for all $n$,
$u_t^{(n)}(\tilde{h}(n))\le u_t^{(n)}(h(n))$, which implies that for all $t\ge0$, $l_t(\tilde{h})=0$
so that $\tilde{h}\in Z_\infty$. $\hfill\Box$\\ 

Let us note the following inequalities which are direct consequences of the concavity of $\phi$. 
Let $\gamma$ be the unique value such that 
$\gamma\in(0,\rho)$ and $\phi'(\gamma)=0$ if $X$ is not a subordinator and $\gamma=\infty$ otherwise. 
Then for all $u\in(0,\gamma)$ and $n\ge0$ such that $u+\varepsilon_n\in(0,\gamma)$,
$\phi'(u+{\varepsilon_n})  \leq\dfrac{\phi^{(n)}(u)}{u} \leq \phi'({\varepsilon_n})$, so that
\begin{equation} \label{concav}
\dfrac{1}{u\phi'({\varepsilon_n})}\leq\dfrac{1}{\phi^{(n)}(u)}\leq
\dfrac{1}{u\phi'(u+{\varepsilon_n})}\,.
\end{equation}

\begin{lemma} \label{RV1} $\mbox{}$
\begin{itemize}
\item[$1.$] If $\displaystyle\lim_{n\to\infty} \int_{h(n)}^\theta \dfrac{du}{\phi^{(n)}(u)} = 
\int_0^\theta \dfrac{du}{\phi(u)}$, for some $\theta\in(0,\rho)$, then
$\lim_{n\to\infty}\dfrac{|\log{h({n})}|}{\phi'({\varepsilon_n})}=0$.
\item[$2.$] If $\displaystyle\lim_{n\to\infty}\dfrac{|\log{h(n)}|}{\phi'({\varepsilon_n})}=\infty$,
then $\displaystyle\lim_{n\to\infty} \int_{h(n)}^\theta \dfrac{du}{\phi^{(n)}(u)} = \infty$
for all $\theta\in(0,\rho)$.
\end{itemize}
\end{lemma}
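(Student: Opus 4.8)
\emph{Proof plan.} Both assertions are proved by combining the concavity estimates (\ref{concav}) with the fact, recorded in Subsection \ref{9693}, that $\phi'(\varepsilon_n)\to\infty$ as $n\to\infty$ (since $\varepsilon_n\downarrow0$ and $\phi'(0)=\infty$ by (\ref{8378})). The only genuine work is to localise the integral $\int_{h(n)}^{\theta}du/\phi^{(n)}(u)$ near its (possibly singular) lower endpoint $h(n)$, where (\ref{concav}) is tight, and to check that the logarithmic constants produced along the way are absorbed by the divergence of $\phi'(\varepsilon_n)$.

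For assertion 2, fix $\theta\in(0,\rho)$ and choose $\theta'\in(0,\theta\wedge\gamma)$ with $\theta'<1$. For $n$ large enough we have $\varepsilon_n+\theta'<\gamma$ (so the left inequality in (\ref{concav}) is valid on $(h(n),\theta')$) and $\theta<\rho_n$ (recall $\rho_n\uparrow\rho$), hence $\phi^{(n)}>0$ on $(0,\theta)$ and
\[\int_{h(n)}^{\theta}\frac{du}{\phi^{(n)}(u)}\ \ge\ \int_{h(n)}^{\theta'}\frac{du}{u\,\phi'(\varepsilon_n)}\ =\ \frac{|\log h(n)|-|\log\theta'|}{\phi'(\varepsilon_n)}.\]
The subtracted term tends to $0$ because $\phi'(\varepsilon_n)\to\infty$, while the first term tends to $\infty$ by hypothesis; hence the left-hand side diverges, which is the claim.

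For assertion 1, keep the $\theta$ of the hypothesis and fix $\delta\in(0,\theta\wedge\gamma)$ with $\delta<1$. The plan is, first, to show $\int_{h(n)}^{\delta}du/\phi^{(n)}(u)\to\int_0^{\delta}du/\phi(u)$: writing $\int_{h(n)}^{\delta}=\int_{h(n)}^{\theta}-\int_{\delta}^{\theta}$, the first integral tends to $\int_0^{\theta}du/\phi(u)$ by assumption, and since $\phi^{(n)}\uparrow\phi$ on the compact set $[\delta,\theta]$ with $1/\phi^{(m)}$ bounded there for any $m$ with $\rho_m>\theta$, monotone (or dominated) convergence gives $\int_{\delta}^{\theta}du/\phi^{(n)}(u)\to\int_{\delta}^{\theta}du/\phi(u)$; subtracting yields the assertion. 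Next, by (\ref{concav}) exactly as above, for $n$ large
\[\frac{|\log h(n)|}{\phi'(\varepsilon_n)}\ \le\ \int_{h(n)}^{\delta}\frac{du}{\phi^{(n)}(u)}+\frac{|\log\delta|}{\phi'(\varepsilon_n)},\]
so taking $\limsup_n$ and using $\phi'(\varepsilon_n)\to\infty$ gives $\limsup_n|\log h(n)|/\phi'(\varepsilon_n)\le\int_0^{\delta}du/\phi(u)$. Since $\int_0^{\theta}du/\phi(u)<\infty$ by (\ref{8378}), letting $\delta\downarrow0$ forces $\lim_n|\log h(n)|/\phi'(\varepsilon_n)=0$. (Equivalently, the hypothesis of assertion 1 means $c(h)=0$ via (\ref{4005}) in Lemma \ref{3420}, but this reformulation is not needed.)

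The step I expect to require the most care is the splitting in assertion 1: one must be sure that the ``bulk'' part $\int_{\delta}^{\theta}du/\phi^{(n)}(u)$ genuinely converges and can be removed cleanly, so that all of the possible divergence of $\int_{h(n)}^{\theta}du/\phi^{(n)}(u)$ is concentrated near $0$, where (\ref{concav}) controls it precisely. The remaining ingredients — discarding finitely many $n$ so that $\varepsilon_n$ is small enough for (\ref{concav}) to apply on $(0,\delta)$ (resp. $(0,\theta')$) and so that $\theta<\rho_n$, and observing that $\phi'(\varepsilon_n)\to\infty$ annihilates the constants $\log\delta$ and $\log\theta'$ — are routine.
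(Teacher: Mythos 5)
Your proof is correct and matches the paper's argument essentially step for step: both assertions rest on the concavity bound (\ref{concav}), the divergence $\phi'(\varepsilon_n)\to\infty$, and (for assertion 1) the localisation of the integral near $0$ by removing the bulk $\int_a^\theta du/\phi^{(n)}(u)$ via dominated convergence and then letting the cut-off $a$ (your $\delta$) tend to $0$. The only cosmetic difference is that the paper phrases the key concavity inequality as a lower bound on $\int_{h(n)}^a du/\phi^{(n)}(u)$, whereas you rearrange it into an upper bound on $|\log h(n)|/\phi'(\varepsilon_n)$; the content is identical.
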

\begin{proof}
From the assumption and dominated convergence on $[a,\theta)$, we obtain that for all $a\in(0,\theta)$,
    \begin{equation*}
        \lim_{n\to\infty}\int_{h(n)}^a \dfrac{du}{\phi^{(n)}(u)}=\int_0^a \dfrac{du}{\phi(u)}.
    \end{equation*}
On the other hand recall that $\lim_{n\rightarrow\infty}\phi'(\varepsilon_n)=\infty$. Then for all 
$a\in(0,\gamma)$ and $h(n)\le a$, (\ref{concav}) yields,
\begin{equation*}
    \int_{h(n)}^a \dfrac{du}{\phi^{(n)}(u)}\geq \dfrac{\log(a)}{\phi'({\varepsilon_n})} - 
    \dfrac{\log(h(n))}{\phi'({\varepsilon_n})} \implies \limsup_{n\to\infty} 
    \dfrac{|\log(h(n))|}{\phi'({\varepsilon_n})} \leq \int_0^a \dfrac{du}{\phi(u)}. 
\end{equation*}
The first assertion is obtained by taking the limit  when $a$ tends to $0$ in the last inequality.
Then second assertion is a direct consequence of (\ref{concav}).
\end{proof}

\noindent {\it Proof of Proposition $\ref{1100}$}. It follows directly from 
Lemma \ref{RV1} and the definitions of $\mathcal{Z}$ and $\mathcal{Z}_0$. $\hfill\Box$\\

\noindent {\it Proof of Theorem $\ref{2385}$}. This proof requires the following additional lemma.

\begin{lemma}\label{9331} If $\liminf_{n\rightarrow\infty}\phi(h(n)+\varepsilon_n)/\phi(\varepsilon_n)>1$, 
then for all $\theta\in(0,\rho)$,
\begin{equation*}
    \lim_{n\to\infty} \int_{h(n)}^\theta \dfrac{du}{\phi^{(n)}(u)} = \int_0^\theta 
    \dfrac{du}{\phi(u)},
\end{equation*}
that is $h\in\mathcal{Z}_0$.
\end{lemma}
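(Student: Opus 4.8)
The strategy is to prove the stated convergence for one carefully chosen $\theta$ and then to bootstrap to all $\theta\in(0,\rho)$ via part~3 of Lemma~\ref{3420}. Fix $\theta\in(0,\gamma\wedge\rho_0)$, with the convention $\gamma=\rho_0=\infty$ when $X$ is a subordinator. Since $\varepsilon_n\to0$ and $h(n)\to0$, for all $n$ large enough we have $\theta+\varepsilon_n<\gamma$ and $h(n)<\theta$; hence, for such $n$, $\phi$ is increasing on $(0,\theta+\varepsilon_n)\subset(0,\gamma)$ and each $\phi^{(n)}$ is positive and increasing on $(0,\theta)$. Once we show that $\int_{h(n)}^\theta du/\phi^{(n)}(u)$ converges to $\int_0^\theta du/\phi(u)$, part~3 of Lemma~\ref{3420} and formula~(\ref{4005}) give $c(h)=0$ together with the convergence for every $\theta\in(0,\rho)$, i.e.\ $h\in\mathcal Z_0$.

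Fix $a\in(0,\theta)$ and split the integral at $a$. On $[a,\theta]$ the functions $\phi^{(n)}$ increase to $\phi$ and are bounded below by $\min_{[a,\theta]}\phi^{(0)}>0$ (here $\theta<\rho_0$ is used), so monotone convergence yields $\int_a^\theta du/\phi^{(n)}(u)\to\int_a^\theta du/\phi(u)$. The only real issue is the piece near the origin, for which I would write
\[
\int_{h(n)}^a\frac{du}{\phi^{(n)}(u)}
=\int_{h(n)}^a\frac{du}{\phi(u)}
+\int_{h(n)}^a\Big(\frac1{\phi^{(n)}(u)}-\frac1{\phi(u)}\Big)\,du .
\]
The first term converges to $\int_0^a du/\phi(u)$, which is finite by~(\ref{8378}). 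In the second term the integrand is nonnegative since $\phi^{(n)}\le\phi$, and writing $\phi(u)-\phi^{(n)}(u)=\phi(\varepsilon_n)-\big(\phi(u+\varepsilon_n)-\phi(u)\big)$ and using that $\phi$ is increasing gives $\phi(u)-\phi^{(n)}(u)\le\phi(\varepsilon_n)$, whence
\[
0\le\frac1{\phi^{(n)}(u)}-\frac1{\phi(u)}\le\frac{\phi(\varepsilon_n)}{\phi(u)\,\phi^{(n)}(u)},\qquad u\in[h(n),a].
\]

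This is where the hypothesis enters. From $\liminf_n\phi(h(n)+\varepsilon_n)/\phi(\varepsilon_n)>1$ we obtain $\delta>0$ with $\phi^{(n)}(h(n))=\phi(h(n)+\varepsilon_n)-\phi(\varepsilon_n)\ge\delta\,\phi(\varepsilon_n)$ for all large $n$; since $\phi^{(n)}$ is increasing, $\phi^{(n)}(u)\ge\delta\,\phi(\varepsilon_n)$ for every $u\in[h(n),a]$, so $\phi(\varepsilon_n)/\phi^{(n)}(u)\le1/\delta$ there. Hence the integrand of the second term is dominated on $(0,a)$ by $\delta^{-1}\phi(u)^{-1}\mathds{1}_{(0,a)}(u)\in L^1(0,a)$, a bound uniform in $n$, while for each fixed $u\in(0,a)$ it tends to $0$ (eventually $h(n)<u$ and $\phi^{(n)}(u)\to\phi(u)>0$). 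Dominated convergence then forces the second term to $0$, and combining the three limits yields $\int_{h(n)}^\theta du/\phi^{(n)}(u)\to\int_0^\theta du/\phi(u)$, as required.

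I expect the main obstacle to be precisely this estimate on $[h(n),a]$: the crude bound obtained by feeding the concavity inequalities~(\ref{concav}) into $\int_{h(n)}^a\phi(\varepsilon_n)/(\phi(u)\phi^{(n)}(u))\,du$ produces a term of order $\phi(\varepsilon_n)\,|\log h(n)|/\phi(h(n))$, which need not vanish. The correct move is to avoid it altogether: the hypothesis forces $\phi^{(n)}(h(n))$ to be comparable to $\phi(\varepsilon_n)$, and monotonicity of $\phi^{(n)}$ propagates this comparison to the entire interval, giving the $n$-uniform domination $\phi(\varepsilon_n)/\phi^{(n)}(u)\le1/\delta$ that makes dominated convergence applicable. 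The bookkeeping about $\theta<\gamma$ (so that $\phi$ is genuinely increasing on the relevant range) and $\theta<\rho_0$ (so that $\phi^{(0)}$ stays bounded away from $0$ on $[a,\theta]$) is routine.
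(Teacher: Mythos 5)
Your proof is correct and takes essentially the same route as the paper's: the key step in both is to use the hypothesis to get a uniform bound $\phi(\varepsilon_n)/\phi^{(n)}(u)\le C$ for $u\ge h(n)$ (via monotonicity of $\phi^{(n)}$), which yields an $L^1$ dominant of the form $C/\phi(u)$ and lets dominated convergence finish the argument. The only cosmetic differences are that the paper compares $1/\phi^{(n)}(u)$ to $1/\phi(u+\varepsilon_n)$, for which the numerator $\phi(\varepsilon_n)$ appears as an exact identity rather than via your inequality $\phi(u)-\phi^{(n)}(u)\le\phi(\varepsilon_n)$, and the paper applies dominated convergence in one stroke on $[h(n),\theta]$ without your auxiliary split at $a$.
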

\begin{proof} We can assume with no loss of generality that $\theta\in(0,\gamma)$.
Then we derive from the monotone convergence theorem that 
$\displaystyle \lim_{n\to\infty}\int_{h(n)}^\theta 
    \dfrac{du}{\phi(u+\varepsilon_n)}=\int_0^\theta \dfrac{du}{\phi(u)}$.
On the other hand, for all $n\ge0$ such that $u+\varepsilon_n\in(0,\theta)$,  
\begin{align*}
    0\leq \dfrac{1}{\phi^{(n)}(u)}-\dfrac{1}{\phi(u+\varepsilon_n)}=
    \dfrac{\phi(\varepsilon_n)}{\phi^{(n)}(u)\phi(u+\varepsilon_n)}
    \leq \dfrac{\phi(\varepsilon_n)}{\phi^{(n)}(u)\phi(u)}.
\end{align*}
Moreover, for $u\geq h(n)$,
\begin{equation}\label{9332}
    \dfrac{\phi(\varepsilon_n)}{\phi^{(n)}(u)}\leq
    \dfrac{\phi(\varepsilon_n)}{\phi(h(n)+\varepsilon_n)-\phi(\varepsilon_n)} = 
    \dfrac{1}{\phi(h(n)+\varepsilon_n)/\phi(\varepsilon_n)-1}
\end{equation}
and from our assumption,  there exists $C>0$ such that for all $n$ large enough
\begin{equation*}
    \mathds{1}_{[h(n),\theta]}(u)\left(\dfrac{1}{\phi^{(n)}(u)}-
    \dfrac{1}{\phi(u+\varepsilon_n)}\right)\leq \dfrac{C}{\phi(u)}\,.
\end{equation*}
Then the result follows from dominated convergence.
\end{proof}

We are now ready to achieve the proof of Theorem \ref{2385}.
Assume now that $\phi=-\varphi$ is regularly varying at 0, with index $\alpha\in(0,1)$. 
Then from Theorem 1.7.2b in \cite{bgt}, $\phi'$ is regularly varying at 0, with index $\alpha-1$. \\

We assume first that for all $k>0$, $\sigma^{(n)}_{k/h(n)} \xrightarrow[n\to\infty]{(d)} \zeta + c(h)$, 
that is $h\in\mathcal{Z}$. If $c(h)=0$ then by Proposition \ref{1100}, 
$\lim_{n\to\infty}\dfrac{|\log h(n)|}{\phi'(\varepsilon_n)} = 0$. Now assume that $c(h)>0$, let $d>0$ and 
write,
\begin{equation}\label{2399}
\int_{h(n)}^\theta \dfrac{du}{\phi^{(n)}(u)} =
\int_{h(n)}^{d\varepsilon_n} \dfrac{du}{\phi^{(n)}(u)} + 
\int_{d\varepsilon_n}^\theta 
\dfrac{du}{\phi^{(n)}(u)}.
\end{equation}
Since $\lim_n\phi((1+d)\varepsilon_n)/\phi(\varepsilon_n)=(1+d)^\alpha>1$, Lemma \ref{9331} implies that
the second term of the right hand side of (\ref{2399}) tends to 
$\displaystyle\int_0^\theta\dfrac{du}{\phi(u)}$ as $n\rightarrow\infty$, that is, 
$n\mapsto d\varepsilon_n \in\mathcal{Z}_0$. It follows from Theorem \ref{3332} that
\begin{equation*}
\lim_{n\to\infty}\int_{h(n)}^{d{\varepsilon_n}} \dfrac{du}{\phi^{(n)}(u)}=  
\lim_{n\to\infty} \left(\int_{h(n)}^\theta \dfrac{du}{\phi^{(n)}(u)}-
\int_{d\varepsilon_n}^\theta \dfrac{du}{\phi^{(n)}(u)}\right) = c(h)\,.
\end{equation*}
Since $c(h)>0$, for all $n$ large enough, $h(n)<d\varepsilon_n$, and then the inequalities 
(\ref{concav}) yield, 
\begin{equation}\label{4676}
\dfrac{\log{d{\varepsilon_n}}-\log{h(n)}}{\phi'( {\varepsilon_n} )}\leq 
\int_{h(n)}^{d{\varepsilon_n}} \dfrac{du}{\phi^{(n)}(u)}\leq 
\dfrac{\log{d{\varepsilon_n}}-\log{h(n)}}{\phi'(d{\varepsilon_n} +{\varepsilon_n} )}.
\end{equation}
Moreover, since $\phi'$ is regularly varying,
\begin{equation*} 
\limsup_{n\to\infty} \dfrac{-\log{h(n)}}{\phi'({\varepsilon_n})}  \leq c(h) \leq 
(d+1)^{1-\alpha}  \liminf_{n\to\infty} 
\dfrac{-\log{h(n)}}{\phi'({\varepsilon_n})}.
\end{equation*}
We conclude that $2.$ implies $1.$ by taking the limit in this inequality as $d$ tends to $0$.

Now suppose that $c(h) = \displaystyle\lim_{n\to\infty}\dfrac{|\log{h(n)}|}{\phi'({\varepsilon_n})}$ 
exists. Assume first that $c(h)>0$ and recall that since $\phi'$ is regularly varying with index 
$\alpha-1$, $\displaystyle\lim_{n\to\infty} \dfrac{|\log{\varepsilon_n}|}{\phi'({\varepsilon_n})} = 0$. 
This implies that for all $d>0$ and for all $n$ large enough, $h(n)<d \varepsilon_n$. Using 
(\ref{2399}) and (\ref{4676}) and taking the limit when $d$ goes to $0$ yields
$\displaystyle\lim_{n\to\infty} \int_{h(n)}^\theta \dfrac{du}{\phi^{(n)}(u)} 
=\int_0^\theta \dfrac{du}{\phi(u)} + c(h)$. Then Theorem \ref{3332} allows us to derive that for all 
$k>0$, $\sigma^{(n)}_{k/h(n)} \xrightarrow[n\to\infty]{(d)} \zeta + c(h)$. If $c(h) = 0$, then 
define the two subsequences $(n_i)$ and $(m_i)$ such that for all $i\ge0$, 
$h(n_i)<\varepsilon_{n_i}$ and $h(m_i)\geq\varepsilon_{m_i}$.
For the subsequence $(n_i)$, inequalities (\ref{2399}) and (\ref{4676}) with $d=1$ 
allow us to obtain that $\displaystyle\lim_n\int_{h(n)}^\theta \dfrac{du}{\phi^{(n)}(u)}=
\int_{0}^\theta \dfrac{du}{\phi(u)}$. 
We conclude from Theorem \ref{3332} that 2.~holds for the subequence $(n_i)$.
For the subsequence $(m_i)_{i\in\mathbb{N}}$, since, as already noticed above,  
$m_i\mapsto\varepsilon_{m_i}\in\mathcal{Z}_0$, it suffices to apply part $3.$ of Proposition 
$\ref{1332}$. Then we have proved that 1.~and 2.~are equivalent in Theorem \ref{2385}.\\

It is enough to prove that 3~implies 1. Assume  without loss of generality that 3.~holds for $k=1$. 
Since for all $k>0$ and all $n$ 
sufficiently large, $kh(n)\le \sqrt{h(n)}$, Proposition \ref{1442} implies that 
$\sigma^{(n)}_{k/\sqrt{h(n)}} \xrightarrow[n\to\infty]{(d)} \zeta$, for all $k>0$. Therefore, from 
the equivalence between parts 1.~and 2.~of Theorem \ref{2385}, $\displaystyle\lim_{n\to\infty}
\dfrac{|\log{\sqrt{h(n)}}|}{\phi'({\varepsilon_n})}=0$ and hence 
$\displaystyle\lim_{n\to\infty}\dfrac{|\log{{h(n)}}|}{\phi'({\varepsilon_n})}=0$, which is part 1.~of
Theorem \ref{2385}. $\hfill\Box$\\ 

\subsection{Proof of the results in Subsection $\ref{8347}$}

\subsubsection{Convergence in $L_1$}\label{5935}

\noindent {\it Proof of Proposition $\ref{8453}$}. Denote by $\p_x^\uparrow$ the probability under 
which $X$ starts from $x$ and is conditioned to never reach $0$. 
Recall that $\p_x(Z_t<\infty) = \p_x(\zeta>t) = e^{-xu_t(0)}$ and the notation 
$\tau=\inf\{t\ge0:X_t\le0\}$. Note also that for all $t\ge0$, $\{\tau<\infty\}\subset\{Z_t<\infty\}$
and that for all $x>0$, $\p_x(\tau<\infty)=e^{-\rho x}$. Then we can write
\begin{align*}
    \p_x^\uparrow( \zeta>t ) &= \p_x( Z_t<\infty , \tau = \infty) /\p_x( \tau = \infty) \\
    &=  \left( \p_x( Z_t<\infty) -\p_x( Z_t<\infty, \tau < \infty) \right) /\p_x( \tau = \infty)
    = \frac{ e^{-xu_t(0)} - e^{-\rho x} }{ 1-e^{-\rho x}}\,.
\end{align*}
Therefore, from (\ref{3451}), under $\p_x^\uparrow$, the law of $\zeta$ is absolutely continuous on 
$[0,\infty)$ with respect to the Lebesgue measure and its density is given by 
$f_x(t):=\frac{x\phi(u_t(0))e^{-x u_t(0)}}{1-e^{-\rho x}}$. This allows us to write
    \begin{equation*}
         \e_x^\uparrow(\zeta^n\1{\zeta<\infty}) =  
         \int_0^\infty t^n \frac{x\phi(u_t(0))e^{-x u_t(0)}}{1-e^{-\rho x}}dt\,.
    \end{equation*}
The result follows using the substitution $u_t(0)=\lambda$ and the fact that $u_t(0)=F^{-1}(t)$, where 
$F:y\mapsto\int_0^ydu/\phi(u)$ 
is a bijection from $[0,\rho)$ to $[0,\infty)$,  together with the equality $ \e_x(\zeta^n\1{\zeta<\infty}) 
=(1-e^{-\rho x})\e_x^\uparrow(\zeta^n\1{\zeta<\infty})$. Since $|\phi'(\rho)|\in(0,\infty)$, 
$\phi(u)\sim(\rho-u)|\phi'(\rho)|$ as $u\uparrow\rho$, so that $F(y) =_{y\to \rho} O(\log(\rho-y))$, which 
implies that the moments of $\zeta$ are finite. 

We show the second equality from same arguments.
$\hfill\Box$\\

Recall from Subsection \ref{9693} the definition of $(\varepsilon_n)$, $(h(n))$, 
$(X^{(n)})$ and $X$. Let us also emphasize that 
\[h\in\mathcal{Z}_0\,,\]
throughout the rest of this subsection. Moreover, in this subsection, we assume that $X$ is not a 
subordinator, in particular $\rho\in(0,\infty)$. Indeed, the proof of Theorem \ref{0486} is actually 
simpler in the case  where $X$ is a subordinator and we allow ourself to skip it. We also recall the 
notation $\tau=\inf\{t:X_t\le0\}$ and introduce the following one,
\[\tau^{(n)}=\inf\{t:X_t^{(n)}\le0\}\,.\]

\begin{lemma}\label{3722}
Let $\psi:[0,\infty)\rightarrow[\rho,\infty)$ be the inverse of $\varphi$, that is, 
$\varphi\circ\psi(\lambda)=\lambda$ and define $\psi^{(n)}:[0,\infty)\rightarrow[\rho_n,\infty)$ 
the inverse of $\varphi^{(n)}$ in the same way, where $\rho_n$ is the largest root of 
$\varphi^{(n)}$. Let ${\rm e}_n:={\rm e}/h(n)$ where ${\rm e}$ is an exponentially distributed random 
variable with parameter $1$ which is independent of $(X^{(n)})$ and $X$. Then
\begin{equation}\label{4303}
\e_x\left(\int_0^{{\rm e}_n}\frac{du}{X_u}\ind_{\{\tau=\infty\}}\right) = 
\int_0^\rho \frac{e^{-\lambda x}-e^{\psi(h(n))x}}{h(n)+\phi(\lambda)} d\lambda \,,
\end{equation}
\begin{equation}\label{4066}
\e_x\left(\int_0^{{\rm e}_n}\frac{du}{X^{(n)}_u}\ind_{\{\tau^{(n)}=\infty\}}\right) = 
\int_0^{\rho_n}\frac{e^{-\lambda x}-e^{-\psi^{(n)}(h(n))x}}{h(n)+\phi^{(n)}(\lambda)} 
d\lambda \,.
\end{equation}
\end{lemma}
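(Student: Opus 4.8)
The plan is to prove (\ref{4303}) in detail and then observe that (\ref{4066}) follows verbatim upon replacing $(X,\varphi,\rho,\psi,\tau)$ by $(X^{(n)},\varphi^{(n)},\rho_n,\psi^{(n)},\tau^{(n)})$; this substitution is legitimate because $X^{(n)}$ is again a spectrally positive L\'evy process with $\varphi^{(n)}(0)=0$ that drifts to $+\infty$ (recall $\varphi^{{(n)}'}(0)=\varphi'(\varepsilon_n)<0$) and is not a subordinator, so that $\rho_n\in(0,\infty)$, $\psi^{(n)}$ is well defined, and ${\rm e}_n$ remains an independent exponential variable. Since ${\rm e}_n$ has parameter $h(n)$ and is independent of $X$, Tonelli's theorem gives
\[\e_x\left(\int_0^{{\rm e}_n}\frac{du}{X_u}\ind_{\{\tau=\infty\}}\right)=\int_0^\infty e^{-h(n)t}\,\e_x\!\left(\frac{1}{X_t}\ind_{\{\tau=\infty\}}\right)dt\,.\]
First I would rewrite the inner expectation using the Markov property at time $t$ together with the decomposition $\{\tau=\infty\}=\{\tau>t\}\cap\theta_t^{-1}\{\tau=\infty\}$ and the equality $\p_y(\tau<\infty)=e^{-\rho y}$ recalled in the proof of Proposition \ref{8453}; this gives $\e_x(X_t^{-1}\ind_{\{\tau=\infty\}})=\e_x\big(X_t^{-1}(1-e^{-\rho X_t})\ind_{\{\tau>t\}}\big)$. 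Then, writing $\frac{1-e^{-\rho y}}{y}=\int_0^\rho e^{-\beta y}\,d\beta$ and applying Tonelli again, the left-hand side of (\ref{4303}) becomes $\int_0^\rho V_\beta(x)\,d\beta$, where $V_\beta(x):=\int_0^\infty e^{-h(n)t}\,\e_x(e^{-\beta X_t}\ind_{\{\tau>t\}})\,dt$ is the $h(n)$-resolvent of $X$ killed at $\tau$, evaluated at the exponential $y\mapsto e^{-\beta y}$.

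The core computation is the evaluation of $V_\beta(x)$ for $\beta\in(0,\rho)$. I would compare it with the non-killed resolvent $\int_0^\infty e^{-h(n)t}\e_x(e^{-\beta X_t})\,dt=\frac{e^{-\beta x}}{h(n)-\varphi(\beta)}$, which is finite and positive because $\varphi(\beta)<0<h(n)$ on $(0,\rho)$. Splitting $\e_x(e^{-\beta X_t})$ over $\{\tau>t\}$ and $\{\tau\le t\}$ and applying the strong Markov property at $\tau$ — this is the step where spectral positivity is used, since the absence of negative jumps forces $X_\tau=0$ on $\{\tau<\infty\}$, so the post-$\tau$ process restarts from $0$ — one gets $\e_x(e^{-\beta X_t}\ind_{\{\tau\le t\}})=\e_x(\ind_{\{\tau\le t\}}e^{(t-\tau)\varphi(\beta)})$. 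Multiplying by $e^{-h(n)t}$, integrating over $t\in(0,\infty)$ and using Tonelli, this term contributes $\frac{1}{h(n)-\varphi(\beta)}\,\e_x(e^{-h(n)\tau}\ind_{\{\tau<\infty\}})$, and the last expectation equals $e^{-\psi(h(n))x}$ by the first-passage formula for spectrally positive L\'evy processes (obtained by optional stopping of the bounded martingale $e^{-\psi(h(n))X_{\tau\wedge t}-h(n)(\tau\wedge t)}$ and letting $t\to\infty$, using $X_\tau=0$). Hence $V_\beta(x)=\dfrac{e^{-\beta x}-e^{-\psi(h(n))x}}{h(n)+\phi(\beta)}$, and integrating over $\beta\in(0,\rho)$ gives precisely (\ref{4303}).

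I do not expect a serious analytic obstacle: all the integrands above are non-negative and bounded — indeed $\frac{1-e^{-\rho y}}{y}\le\rho$, so the quantity in (\ref{4303}) is bounded by $\rho/h(n)<\infty$ — and the stopped martingale is bounded by $1$ since $X_{\tau\wedge t}\ge0$, so every Fubini/Tonelli interchange and the limit in the optional-stopping step are justified. The only point that requires genuine care is the bookkeeping: making sure each application of the (strong) Markov property is against an $\mathcal{F}_t$- (resp.\ $\mathcal{F}_\tau$-) measurable functional and that $\{\tau=\infty\}$ is split correctly through the shift $\theta_t$. Once (\ref{4303}) is established, (\ref{4066}) requires nothing new: the identities $\p_y(\tau^{(n)}<\infty)=e^{-\rho_n y}$ and $\e_x(e^{-q\tau^{(n)}}\ind_{\{\tau^{(n)}<\infty\}})=e^{-\psi^{(n)}(q)x}$ hold for the same reasons, and the argument goes through word for word.
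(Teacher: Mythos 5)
Your proof is correct, but it takes a genuinely different route from the paper's. The paper proves (\ref{4303}) via the Wiener--Hopf factorization: it first computes $\e_x(e^{-\lambda X_{\tt e}}\ind_{\{\tau=\infty\}})$ at an independent exponential time ${\tt e}$ using the independence of $\underline{X}_{\tt e}$ and $X_{\tt e}-\underline{X}_{\tt e}$ (Bertoin, Th.~5, Ch.~VI) together with their explicit Laplace transforms (Bertoin, Th.~4, Ch.~VII), observes that $\e_x\bigl(\int_0^{\tt e}\tfrac{du}{X_u}\ind_{\{\tau=\infty\}}\bigr)=\e_x\bigl(\tfrac{1}{\varepsilon X_{\tt e}}\ind_{\{\tau=\infty\}}\bigr)$, integrates the resulting identity over $\lambda\in(0,\infty)$, and then has to run a telescoping argument (pairing $f(\lambda)$ against $f(\lambda+\rho)$) to collapse the integral to $(0,\rho)$. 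You instead apply the Markov property at a fixed $t$ to replace $\ind_{\{\tau=\infty\}}$ by $(1-e^{-\rho X_t})\ind_{\{\tau>t\}}$, expand $\tfrac{1-e^{-\rho y}}{y}=\int_0^\rho e^{-\beta y}\,d\beta$, and reduce the whole thing to the $h(n)$-resolvent of $X$ killed at $\tau$, which you then evaluate in closed form by the strong Markov property at $\tau$ (using $X_\tau=0$ by spectral positivity) and the first-passage identity $\e_x(e^{-q\tau}\ind_{\{\tau<\infty\}})=e^{-\psi(q)x}$. This avoids both the Wiener--Hopf input and the telescoping step, lands directly on $\int_0^\rho$, and is arguably more elementary; the paper's route, on the other hand, passes through the fluctuation-theoretic identity (\ref{2506}) which it reuses elsewhere (e.g.~in Lemma \ref{5622}). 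One further remark: your derivation produces $e^{-\psi(h(n))x}$ in (\ref{4303}), which is the correct sign; the exponent $e^{\psi(h(n))x}$ printed in the statement is a typographical slip (the paper's own intermediate formula (\ref{380}) carries the minus sign, and the companion formula (\ref{4066}) does too).
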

\begin{proof}
Let us set $\underline{X}_u=\inf_{t\le u}X_t$, then
\begin{eqnarray*}
\e_x(e^{-\lambda X_u}\ind_{\{\tau=\infty\}})&=&e^{-\lambda x}E(e^{-\lambda X_u}
\ind_{\{\underline{X}_u\ge-x\}}P_{X_u}(\tau_{-x}=\infty))\\
&=&e^{-\lambda x}E(e^{-\lambda X_u}\ind_{\{\underline{X}_u\ge-x\}}P_{X_u}(-\underline{X}_\infty\le x))\\
&=&e^{-\lambda x}E(e^{-\lambda X_u}\ind_{\{\underline{X}_u\ge-x\}}(1-e^{-\rho(X_u+x)}))\,,
\end{eqnarray*} 
since $-\underline{X}_\infty$ is exponentially distributed with parameter $\rho$. Let $\varepsilon>0$ 
and set ${\tt e}:={\rm e}/\varepsilon$, so that ${\tt e}$ is an exponentially distributed r.v.~with parameter 
$\varepsilon$ which is independent of $X$. Then according to the previous equality, 
\begin{eqnarray*}
\e_x(e^{-\lambda X_{\tt e}}\ind_{\{\tau=\infty\}})&=&e^{-\lambda x}\e(e^{-\lambda X_{\tt e}}
\ind_{\{\underline{X}_{\tt e}\ge-x\}}(1-e^{-\rho(X_{\tt e}+x)}))\\
&=&e^{-\lambda x}\e(e^{-\lambda X_{\tt e}}\ind_{\{\underline{X}_{\tt e}\ge-x\}})-
e^{-(\lambda+\rho)x}\e(e^{-(\lambda+\rho)X_{\tt e}}\ind_{\{\underline{X}_{\tt e}\ge-x\}})\\
&=&e^{-\lambda x}\e(e^{-\lambda (X_{\tt e}-\underline{X}_{\tt e}}))\e(e^{-\lambda \underline{X}_{\tt e}}
\ind_{\{\underline{X}_{\tt e}\ge-x\}})-\\
&&
e^{-(\lambda+\rho)x}\e(e^{-(\lambda+\rho)(X_{\tt e}-\underline{X}_{\tt e})})
\e(e^{-(\lambda+\rho)\underline{X}_{\tt e}}\ind_{\{\underline{X}_{\tt e}\ge-x\}})\,,
\end{eqnarray*} 
where the third equality above comes from the fact that $\underline{X}_{\tt e}$ and 
$X_{\tt e}-\underline{X}_{\tt e}$ are independent, see Th. 5, chap. VI in \cite{be}. 
On the other hand recall that $-\underline{X}_{\tt e}$ is exponentially distributed with parameter
$\psi(\varepsilon)$ and that the law of $X_{\tt e}-\underline{X}_{\tt e}$ is given by
\[\e(e^{-\alpha(X_{\tt e}-\underline{X}_{\tt e})})=\frac{\varepsilon}{\varepsilon-\varphi(\alpha)}\left(1-
\frac{\alpha}{\psi(\varepsilon)}\right)\,,\]
see Th. 4, chap. VII in \cite{be}. We derive from above that,
\begin{eqnarray}
\e_x(e^{-\lambda X_{\tt e}}\ind_{\{\tau=\infty\}})&=&e^{-\lambda x}
\frac{\varepsilon}{\varphi(\lambda)-\varepsilon}(e^{(\lambda-\psi(\varepsilon))x}-1)-\label{2506}\\
&&e^{-(\lambda+\rho) x}\frac{\varepsilon}{\varphi(\lambda+\rho)-\varepsilon}
(e^{((\lambda+\rho)-\psi(\varepsilon))x}-1)\,.\nonumber
\end{eqnarray} 
On the other hand, note that 
\begin{eqnarray*}
\e_x\left(\int_0^{{\tt e}}\frac{du}{X_u}\ind_{\{\tau=\infty\}}\right)&=&
\e_x\left(\int_0^\infty\varepsilon e^{-\varepsilon y}dy\int_0^y\frac{du}{X_u}\ind_{\{\tau=\infty\}}\right)\\
&=&\e_x\left(\varepsilon\int_0^\infty\frac{du}{X_u}\int_u^\infty 
e^{-\varepsilon y}\,dy\ind_{\{\tau=\infty\}}\right)\\
&=&\e_x\left(\int_0^\infty\frac{e^{-\varepsilon u}}{X_u}\,du\ind_{\{\tau=\infty\}}\right)
=\e_x\left(\frac{1}{\varepsilon X_{{\tt e}}}\ind_{\{\tau=\infty\}}\right)\,.
\end{eqnarray*}
Applying this remark and taking $\varepsilon=h(n)$ in the relation (\ref{2506}) once integrated over 
$(0,\infty)$ with respect to $\lambda$, yields
\begin{eqnarray}\e_x\left(\int_0^{{\rm e}_n}\frac{du}{X_u}\ind_{\{\tau=\infty\}}\right)=
\int_0^\infty 
\frac{e^{-\psi(h(n))x}-e^{-\lambda x}}{\varphi(\lambda)-h(n)}-
\frac{e^{-\psi(h(n))x}-e^{-(\lambda+\rho) x}}{\varphi(\lambda+\rho)-h(n)}\,d\lambda,\label{380}
\end{eqnarray}
where ${\rm e}_n$ is defined in the statement. 
We can check that the function 
$\displaystyle f(\lambda):=\frac{e^{-\psi(h(n))x}-e^{-\lambda x}}{\varphi(\lambda)-h(n)}$ is integrable 
over $(0,M)$, for all $M>0$ so that 
\begin{eqnarray*}
\int_0^\infty f(\lambda)-f(\lambda+\rho)\,d\lambda
&=&\lim_{M\to \infty } \int_0^M f(\lambda )d\lambda  - \int_0^M f(\lambda + \rho) d\lambda \\
&=&\lim_{M\to \infty } \left(\int_0^\rho f(\lambda )d\lambda - \int_M^{M+\rho} f(\lambda) d\lambda\right)=
\int_0^\rho f(\lambda )d\lambda\,.
\end{eqnarray*}
This gives (\ref{4303}) and applying the same arguments to $X^{(n)}$ gives (\ref{4066}). 
\end{proof}

\begin{lemma}\label{3872}
Let ${\rm e}_n$ be as in Lemma $\ref{3722}$. Then for all $x>0$, 
\begin{eqnarray}
&&\lim_{n\rightarrow\infty}\e_x\left(\int_0^{{\rm e}_n}\frac{du}{X_u}\ind_{\{\tau=\infty\}}\right)= 
\lim_{n\rightarrow\infty}\e_x\left(\int_0^{{\rm e}_n}\frac{du}{X^{(n)}_u}\ind_{\{\tau^{(n)}=\infty\}}\right)\nonumber\\
&=&\e_x\left(\int_0^{\infty}\frac{du}{X_u}\ind_{\{\tau=\infty\}}\right)=
\int_0^{\rho}\frac{e^{-\lambda x}-e^{-\rho x}}{\phi(\lambda)}<\infty\,.\label{9280}
\end{eqnarray}
\end{lemma}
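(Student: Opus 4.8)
The plan is to pass to the limit $n\to\infty$ in the two explicit formulas of Lemma~\ref{3722}. For the left-hand side of $(\ref{9280})$ I would argue probabilistically rather than through $(\ref{4303})$: since $h$ is decreasing, ${\rm e}_n={\rm e}/h(n)$ increases to $\infty$, $\p_x$-a.s., so $\int_0^{{\rm e}_n}\frac{du}{X_u}\ind_{\{\tau=\infty\}}$ increases to $\int_0^{\infty}\frac{du}{X_u}\ind_{\{\tau=\infty\}}$ and monotone convergence gives the first equality in $(\ref{9280})$. On $\{\tau=\infty\}=\{\zeta<\infty\}$ the Lamperti representation $(\ref{2675})$ yields $\zeta=\int_0^\infty\frac{du}{X_u}$, hence $\e_x(\int_0^\infty\frac{du}{X_u}\ind_{\{\tau=\infty\}})=\e_x(\zeta\ind_{\{\zeta<\infty\}})$, which is finite and equals $\int_0^\rho\frac{e^{-\lambda x}-e^{-\rho x}}{\phi(\lambda)}\,d\lambda$ by Proposition~\ref{8453}.

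It remains to show the right-hand side of $(\ref{4066})$ has the same limit. Set $a_n:=\psi^{(n)}(h(n))$. One first checks $a_n\to\rho$: on one hand $a_n\ge\rho_n\to\rho$; on the other, for any fixed $\mu>\rho$, $\varphi^{(n)}(\mu)=\varphi(\mu+\varepsilon_n)-\varphi(\varepsilon_n)\to\varphi(\mu)>0$, so $\varphi^{(n)}(\mu)>h(n)$ for $n$ large and therefore $a_n<\mu$, giving $\limsup_n a_n\le\rho$. The integrand in $(\ref{4066})$ is nonnegative on $(0,\rho_n)$ (since $a_n>\rho_n$) and converges pointwise to $\frac{e^{-\lambda x}-e^{-\rho x}}{\phi(\lambda)}$, so Fatou's lemma gives $\liminf_n\int_0^{\rho_n}\frac{e^{-\lambda x}-e^{-a_nx}}{h(n)+\phi^{(n)}(\lambda)}\,d\lambda\ge\int_0^\rho\frac{e^{-\lambda x}-e^{-\rho x}}{\phi(\lambda)}\,d\lambda$; the whole point is the matching upper bound on the $\limsup$.

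For the upper bound I would fix $0<\theta<\gamma<\eta<\rho$ and, for $n$ large enough that $h(n)<\theta$, $\eta<\rho_n$ and $\gamma-\varepsilon_n\in(\theta,\eta)$, split the integral as $\int_0^{h(n)}+\int_{h(n)}^\theta+\int_\theta^\eta+\int_\eta^{\rho_n}$. On $[0,h(n)]$: bound the numerator by $1$ and use $(\ref{concav})$ in the form $\phi^{(n)}(\lambda)\ge\lambda\phi'(\lambda+\varepsilon_n)\ge\lambda\phi'(h(n)+\varepsilon_n)$ to get $\int_0^{h(n)}\frac{d\lambda}{h(n)+\phi^{(n)}(\lambda)}\le\frac{\log(1+\phi'(h(n)+\varepsilon_n))}{\phi'(h(n)+\varepsilon_n)}\to0$, since $\phi'(0)=\infty$. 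On $[h(n),\theta]$, where $\phi^{(n)}$ is increasing so that $\phi^{(n)}(\lambda)\ge\phi^{(n)}(h(n))\ge h(n)\phi'(h(n)+\varepsilon_n)$: replacing $a_n$ by $\rho$ in the numerator and $h(n)+\phi^{(n)}$ by $\phi^{(n)}$ in the denominator produces two error terms controlled by $|e^{-\rho x}-e^{-a_nx}|\int_{h(n)}^\theta\frac{d\lambda}{\phi^{(n)}(\lambda)}$ and $\frac1{\phi'(h(n)+\varepsilon_n)}\int_{h(n)}^\theta\frac{d\lambda}{\phi^{(n)}(\lambda)}$, both $\to0$ because $h\in\mathcal Z_0$ forces $\int_{h(n)}^\theta\frac{d\lambda}{\phi^{(n)}(\lambda)}\to\int_0^\theta\frac{d\lambda}{\phi(\lambda)}<\infty$; and $\int_{h(n)}^\theta\frac{e^{-\lambda x}-e^{-\rho x}}{\phi^{(n)}(\lambda)}\,d\lambda\to\int_0^\theta\frac{e^{-\lambda x}-e^{-\rho x}}{\phi(\lambda)}\,d\lambda$ by Scheff\'e's lemma applied to the densities $\frac1{\phi^{(n)}}\ind_{(h(n),\theta)}$, whose integrals converge to $\int_0^\theta\frac{d\lambda}{\phi(\lambda)}$. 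On $[\theta,\eta]$: $\phi^{(n)}$ is bounded below by a positive constant uniformly in $n$ (large), so dominated convergence gives the limit $\int_\theta^\eta\frac{e^{-\lambda x}-e^{-\rho x}}{\phi(\lambda)}\,d\lambda$. On $[\eta,\rho_n]$: using $h(n)+\phi^{(n)}(\lambda)=\varphi^{(n)}(a_n)-\varphi^{(n)}(\lambda)=\int_\lambda^{a_n}\varphi'(s+\varepsilon_n)\,ds\ge\varphi'(\eta)(a_n-\lambda)$ together with $e^{-\lambda x}-e^{-a_nx}\le x(a_n-\lambda)$, this piece is at most $\frac{x(\rho-\eta)}{\varphi'(\eta)}$. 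Collecting, $\limsup_n\int_0^{\rho_n}\frac{e^{-\lambda x}-e^{-a_nx}}{h(n)+\phi^{(n)}(\lambda)}\,d\lambda\le\int_0^\eta\frac{e^{-\lambda x}-e^{-\rho x}}{\phi(\lambda)}\,d\lambda+\frac{x(\rho-\eta)}{\varphi'(\eta)}$, and letting $\eta\uparrow\rho$, with $\varphi'(\eta)\to\varphi'(\rho)\in(0,\infty)$, finishes the proof together with the Fatou bound.

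The main obstacle is this upper bound, and more precisely the $n$-uniform control of the integrand in $(\ref{4066})$ at the two ends of $(0,\rho)$: near $0$, where $1/\phi^{(n)}$ blows up faster and faster and only the assumption $h\in\mathcal Z_0$ (through Scheff\'e's lemma) together with the concavity estimate $(\ref{concav})$ keeps the integrals bounded, and near $\rho$, where the denominator $h(n)+\phi^{(n)}$ degenerates but is exactly offset by the numerator $e^{-\lambda x}-e^{-a_nx}$ vanishing at the matching linear rate.
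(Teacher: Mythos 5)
Your proof is correct, and it takes a genuinely different route from the paper's in the crucial step, namely the passage to the limit in (\ref{4066}) near $\rho$. The paper separates $e^{-\psi^{(n)}(h(n))x}$ into $e^{-\rho_n x}$ plus a correction, as in (\ref{7753}), and must then balance a delicate cancellation: it shows $\int_0^{\rho_n} d\lambda/(h(n)+\phi^{(n)}(\lambda))$ grows only as $O(|\log h(n)|)$ (which needs the scale-function bound $W^{(n)}(x)\le Ce^{\rho_n x}$ and the mean-value estimate (\ref{7501}) near $\rho_n$), while the prefactor $e^{-\rho_n x}-e^{-\psi^{(n)}(h(n))x}$ is $O(h(n))$, and $h(n)\log h(n)\to0$. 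You sidestep this entirely by keeping $a_n=\psi^{(n)}(h(n))$ in the exponent and exploiting the identity $h(n)+\phi^{(n)}(\lambda)=\varphi^{(n)}(a_n)-\varphi^{(n)}(\lambda)=\int_\lambda^{a_n}\varphi'(s+\varepsilon_n)\,ds$, which, together with $e^{-\lambda x}-e^{-a_n x}\le x(a_n-\lambda)$, makes the integrand on $[\eta,\rho_n]$ bounded by $x/\varphi'(\eta)$ uniformly in $n$; the near-$\rho$ contribution is thus $O(\rho-\eta)$ and is sent to $0$ only at the end. Near $0$ both proofs rest on the same inputs — the concavity bound (\ref{concav}) and the assumption $h\in\mathcal{Z}_0$ through convergence of $\int_{h(n)}^\theta d\lambda/\phi^{(n)}$ — packaged slightly differently (you split at $h(n)$ and invoke Scheff\'e, the paper splits at $\delta h(n)$ and sends $\delta\to 0$). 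You also handle the first limit in (\ref{9280}) via monotone convergence and Proposition \ref{8453} rather than by taking the limit in (\ref{4303}), which is a minor shortcut. Overall your treatment of the boundary $\lambda\uparrow\rho$ is cleaner, avoiding the logarithmic-versus-linear balancing that the paper requires.
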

\begin{proof}
Fix $x>0$, then the equality 
$\lim_{n\rightarrow\infty}\e_x\left(\int_0^{{\rm e}_n}\frac{du}{X_u}\ind_{\{\tau=\infty\}}\right)=
\e_x\left(\int_0^{\infty}\frac{du}{X_u}\ind_{\{\tau=\infty\}}\right)$ simply follows from monotone convergence. 
For the other equalities, thanks to Lemma \ref{3722} it suffices to show that, 
\begin{eqnarray}
&&\lim_{n\rightarrow\infty}\int_0^\rho \frac{e^{-\lambda x}-e^{-\psi(h(n))x}}{h(n)+\phi(\lambda)} d\lambda=
\lim_{n\rightarrow\infty}\int_0^{\rho_n} \frac{e^{-\lambda x}-e^{-\psi^{(n)}(h(n))x}}{h(n)+\phi^{(n)}(\lambda)} 
d\lambda\nonumber\\
&&=\int_0^{\rho}\frac{e^{-\lambda x}-e^{-\rho x}}{\phi(\lambda)}d\lambda<\infty\,.\label{3775}
\end{eqnarray}
We shall only prove that
$\lim_{n\rightarrow\infty}\int_0^{\rho_n} \frac{e^{-\lambda x}-e^{-\psi^{(n)}(h(n))x}}
{h(n)+\phi^{(n)}(\lambda)}d\lambda=\int_0^{\rho}\frac{e^{-\lambda x}-e^{-\rho x}}{\phi(\lambda)}d\lambda<\infty$.
The first limit follows the same lines and is actually easier to obtain. 
Recall that for all $n$, $\rho_n\le\psi^{(n)}(h(n))$, $\rho_n\le\rho$ and that 
both sequences $(\psi^{(n)}(h(n)))$ and $(\rho_n)$ converge to $\rho$. Then write, 
\begin{eqnarray}
\int_0^{\rho_n} \frac{e^{-\lambda x}-e^{-\psi^{(n)}(h(n))x}}{h(n)+\phi^{(n)}(\lambda)} d\lambda&=&
\int_{0}^{\rho_n}\frac{e^{-\lambda x}-e^{-\rho_n x}}{h(n)+\phi^{(n)}(\lambda)}d\lambda\label{7753}\\
&&+(e^{-\rho_n x}-e^{-\psi^{(n)}(h(n))x})\int_0^{\rho_n}\frac{ d\lambda}{h(n)+\phi^{(n)}(\lambda)}.\nonumber
\end{eqnarray}
Note that $\phi^{(n)}(\rho_n)=0$ and hence from the mean value theorem, for 
all $n\ge0$ and $\lambda\in(0,\rho_n)$, there is $\alpha_n\in(\lambda,\rho_n)$ such that 
$\phi^{(n)}(\lambda)=(\rho_n-\lambda)|\phi^{(n)'}(\alpha_n)|$. But recall that $(\rho_n)$ is 
increasing and that $\lim_n \rho_n=\rho$. Since 
$\lim_{n\rightarrow\infty}\phi^{(n)'}=\phi'$ and $\phi'(\rho)\in(0,\infty)$, there are $c>0$, 
$\alpha\in(0,\rho_0)$ and $n_0\ge0$, such that for all $n\ge n_0$ and $\lambda\in(\rho_n-\alpha,\rho_n)$, 
$|\phi^{(n)'}(\lambda)|\ge c$. This implies that for all $n\ge n_0$ and all $\lambda\in(\rho_n-\alpha,\rho_n)$,
\begin{equation}\label{7501}
\frac{\rho_n-\lambda}{\phi^{(n)}(\lambda)}\le c^{-1}\,,
\end{equation}
so that,
\begin{equation}\label{9910}
\int_{\rho_n-\alpha}^{\rho_n}\frac{ d\lambda}{h(n)+\phi^{(n)}(\lambda)}=O(\log(h(n)))\,.
\end{equation}
Moreover, for all $\delta\in(0,1)$ and $n$ sufficiently large,
\begin{eqnarray*}
\int_0^{\rho_n-\alpha} \frac{d\lambda}{h(n)+\phi^{(n)}(\lambda)}&=&
\int_0^{\delta h(n)} \frac{d\lambda}{h(n)+\phi^{(n)}(\lambda)}+
\int_{\delta h(n)}^{\rho_n-\alpha} \frac{d\lambda}{\phi^{(n)}(\lambda)}\\
&\le&(\mbox{cst})\delta+\int_{\delta h(n)}^{\rho_n-\alpha} \frac{d\lambda}{\phi^{(n)}(\lambda)}\,,
\end{eqnarray*}
and $\int_{\delta h(n)}^{\rho_n-\alpha} \frac{d\lambda}{\phi^{(n)}(\lambda)}$ 
tends to $\int_0^{\rho-\alpha}\frac{d\lambda}{\phi(\lambda)}$,
since $h\in\mathcal{Z}_0$. On the other hand, from Fatou's lemma, 
\[\int_0^{\rho-\alpha}\frac{d\lambda}{\phi(\lambda)}\le\liminf_{n\rightarrow\infty}
\int_0^{\rho_n-\alpha} \frac{d\lambda}{h(n)+\phi^{(n)}(\lambda)}\,.\]
Since $\delta$ is chosen arbitrarily, we derive that
\begin{equation}\label{4267}
\lim_{n\rightarrow\infty}
\int_0^{\rho_n-\alpha} \frac{d\lambda}{h(n)+\phi^{(n)}(\lambda)}=
\int_0^{\rho-\alpha}\frac{d\lambda}{\phi(\lambda)}<\infty.
\end{equation}
Then (\ref{4267}) together with (\ref{9910}) show that 
$\int_{0}^{\rho_n}\frac{ d\lambda}{h(n)+\phi^{(n)}(\lambda)}=O(\log(h(n)))$.
Moreover, since $\psi^{(n)}(0)=\rho_n$ and $\psi^{(n)'}(h(n))\rightarrow\psi^{'}(0)\in(0,\infty)$,
$\psi^{(n)}(h(n))\sim \rho+h(n)\psi^{'}(0)$, as $n\uparrow\infty$ and therefore, 
$e^{-\rho_n x}-e^{-\psi^{(n)}(h(n))x}\sim \psi^{'}(0)h(n)$ as $n\rightarrow\infty$. Then 
we can conclude that the second term of the right hand side of (\ref{7753}) satisfies,
$\lim_{n\rightarrow\infty}
(e^{-\rho_n x}-e^{-\psi^{(n)}(h(n))x})\int_0^{\rho_n}\frac{ d\lambda}{h(n)+\phi^{(n)}(\lambda)}=0$.

Let us now write the first term of the right hand side of (\ref{7753}) as
\[\int_{0}^{\rho_n}\frac{e^{-\lambda x}-e^{-\rho_n x}}{h(n)+\phi^{(n)}(\lambda)}d\lambda=
\int_{0}^{\rho_n-\alpha}\frac{e^{-\lambda x}-e^{-\rho_n x}}{h(n)+\phi^{(n)}(\lambda)}d\lambda+
\int_{\rho_n-\alpha}^{\rho_n}\frac{e^{-\lambda x}-e^{-\rho_n x}}{h(n)+\phi^{(n)}(\lambda)}d\lambda,\]
where $\alpha$ is as above and $n$ is sufficiently large so that $\rho_n-\alpha>0$. Then we prove that 
\[\lim_{n\rightarrow\infty}
\int_{0}^{\rho_n-\alpha}\frac{e^{-\lambda x}-e^{-\rho_n x}}{h(n)+\phi^{(n)}(\lambda)}d\lambda
=\int_{0}^{\rho-\alpha}\frac{e^{-\lambda x}-e^{-\rho x}}{\phi(\lambda)}d\lambda\]
in the same way as for (\ref{4267}). For the second term, by using (\ref{7501}), we obtain that 
$\frac{e^{-\lambda x}-e^{-\rho_n x}}{h(n)+\phi^{(n)}(\lambda)}\ind_{[\rho_n-\alpha,\rho_n]}(\lambda)$ 
is bounded by a constant and we derive from dominated convergence that 
\[\lim_{n\rightarrow\infty}
\int_{\rho_n-\alpha}^{\rho_n}\frac{e^{-\lambda x}-e^{-\rho_n x}}{h(n)+\phi^{(n)}(\lambda)}d\lambda
=\int_{\rho-\alpha}^{\rho}\frac{e^{-\lambda x}-e^{-\rho x}}{\phi(\lambda)}d\lambda.\]
Then we have proved that 
$\lim_{n\rightarrow\infty}\int_0^{\rho_n} \frac{e^{-\lambda x}-e^{-\psi^{(n)}(h(n))x}}
{h(n)+\phi^{(n)}(\lambda)}d\lambda=\int_0^{\rho}\frac{e^{-\lambda x}-e^{-\rho x}}{\phi(\lambda)}d\lambda$.
The fact that $\int_0^{\rho}\frac{e^{-\lambda x}-e^{-\rho x}}{\phi(\lambda)}d\lambda<\infty$
is a straightforward consequence of the behaviour of $\phi$ around $\rho$, that is, 
$1/\phi(\lambda)=O(1/(\rho-\lambda))$, as $\lambda\uparrow\rho$. 
\end{proof}

\begin{corollary}\label{1153} Let ${\rm e}_n$ be as in Lemma $\ref{3722}$ and recall that 
$h\in\mathcal{Z}_0$. Then for all $x>0$, 
\[\int_0^{{\rm e}_n}\frac{du}{X^{(n)}_u}\ind_{\{\tau^{(n)}=\infty\}}
\xrightarrow[n\to\infty]{\mathrm{L}^1(\p_x)}\zeta\ind_{\{\zeta<\infty\}}\,.\]
\end{corollary}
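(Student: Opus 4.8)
The plan is to establish the $\mathrm{L}^1(\p_x)$ convergence directly at the level of expectations, combining Lemmas \ref{3722} and \ref{3872} with a monotone comparison. Write $Y_n := \int_0^{{\rm e}_n}\frac{du}{X^{(n)}_u}\ind_{\{\tau^{(n)}=\infty\}}$ and $Y := \zeta\ind_{\{\zeta<\infty\}}$, and introduce the auxiliary variable $V_n := \int_0^{{\rm e}_n}\frac{du}{X_u}\ind_{\{\tau^{(n)}=\infty\}}$, obtained from $Y_n$ by replacing the integrand $1/X^{(n)}_u$ with $1/X_u$ while keeping the same truncation time ${\rm e}_n$ and the same event $\{\tau^{(n)}=\infty\}$. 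The first step is to identify $Y=\int_0^\infty\frac{du}{X_u}\ind_{\{\tau=\infty\}}$, $\p_x$-a.s.: on $\{\tau=\infty\}$ the process $X$ stays positive and drifts to $\infty$, so by the Lamperti representation (\ref{2675}) one has $\zeta=\int_0^\infty du/X_u$ there, while on $\{\tau<\infty\}$ one has $\zeta=\infty$; combining this with the identity $\{\zeta<\infty\}=\{\tau=\infty\}$ recorded in Subsection \ref{9693} gives the claim.

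Next I record two monotonicity facts. Since $X^{(n)}\le X$ by Theorem \ref{2065} and both processes are positive on $\{\tau^{(n)}=\infty\}$, one has $Y_n\ge V_n$ on $\{\tau^{(n)}=\infty\}$, while on its complement both random variables vanish; hence $Y_n\ge V_n$ everywhere. On the other hand ${\rm e}_n={\rm e}/h(n)\uparrow\infty$, since $h$ is decreasing to $0$, and the events $\{\tau^{(n)}=\infty\}$ increase to $\{\tau=\infty\}$ by Theorem \ref{2065} (cf.~Subsection \ref{9693}); therefore the sequence $(V_n)$ is non-decreasing and $V_n\uparrow\int_0^\infty\frac{du}{X_u}\ind_{\{\tau=\infty\}}=Y$, $\p_x$-a.s. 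In particular $V_n\le Y$ for every $n$.

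It remains to assemble the convergence of expectations. By Lemma \ref{3722} each $\e_x(Y_n)$ is finite, and so is $\e_x(V_n)\le\e_x(Y_n)$; by the monotone convergence theorem $\e_x(V_n)\uparrow\e_x(Y)$; and by Lemma \ref{3872}, $\e_x(Y_n)\to\e_x\bigl(\int_0^\infty\tfrac{du}{X_u}\ind_{\{\tau=\infty\}}\bigr)=\e_x(Y)=\int_0^\rho\frac{e^{-\lambda x}-e^{-\rho x}}{\phi(\lambda)}\,d\lambda$, in agreement with Proposition \ref{8453}, which is finite. Since $V_n\le Y_n$ and $V_n\le Y$, the triangle inequality gives $|Y_n-Y|\le(Y_n-V_n)+(Y-V_n)$ pointwise, whence
\[\e_x|Y_n-Y|\le\bigl(\e_x(Y_n)-\e_x(V_n)\bigr)+\bigl(\e_x(Y)-\e_x(V_n)\bigr)\xrightarrow[n\to\infty]{}0,\]
which proves the corollary. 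There is no genuine obstacle at this stage, the analytic content having already been absorbed into Lemmas \ref{3722} and \ref{3872}; the only points that need care are the bookkeeping of the events $\{\tau^{(n)}=\infty\}$ against $\{\tau=\infty\}$ and the finiteness of all the expectations involved, which is what makes the subtractions $\e_x(Y_n)-\e_x(V_n)$ and $\e_x(Y)-\e_x(V_n)$ legitimate.
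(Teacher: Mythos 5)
Your proof is correct and is essentially the paper's argument, only slightly streamlined. You use the same auxiliary quantity $V_n=\int_0^{{\rm e}_n}\frac{du}{X_u}\ind_{\{\tau^{(n)}=\infty\}}$ and the same key inputs ($X^{(n)}\le X$, the identity $\zeta\ind_{\{\zeta<\infty\}}=\int_0^\infty\frac{du}{X_u}\ind_{\{\tau=\infty\}}$, and Lemma~\ref{3872} for $\e_x(Y_n)\to\e_x(Y)$); the only difference is that you collapse the paper's three-term pointwise bound into the two-term bound $(Y_n-V_n)+(Y-V_n)$ and dispatch the second term by directly observing $V_n\uparrow Y$ and invoking monotone convergence, where the paper handles $\e_x(V_n)\to\e_x(Y)$ and the tail $\int_{{\rm e}_n}^\infty\frac{du}{X_u}\ind_{\{\tau=\infty\}}$ separately via Lemma~\ref{3872} together with dominated convergence.
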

\begin{proof} Let us first observe that from the representation (\ref{2675}),
\begin{equation}\label{4200}
\zeta\ind_{\{\zeta<\infty\}}=\int_0^{\infty}\frac{du}{X_u}\ind_{\{\tau=\infty\}}\,,
\end{equation}
and write, 
\begin{eqnarray*}
&&\left|\int_0^{{\rm e}_n}\frac{du}{X^{(n)}_u}\ind_{\{\tau^{(n)}=\infty\}}-\zeta\ind_{\{\zeta<\infty\}}
\right|\le\left|\int_0^{{\rm e}_n}\frac{du}{X^{(n)}_u}\ind_{\{\tau^{(n)}=\infty\}}-
\int_0^{{\rm e}_n}\frac{du}{X_u}\ind_{\{\tau=\infty\}}
\right|+\int_{{\rm e}_n}^{\infty}\frac{du}{X_u}\ind_{\{\tau=\infty\}}\\
&&\le\left(\int_0^{{\rm e}_n}\frac{du}{X^{(n)}_u}
\ind_{\{\tau^{(n)}=\infty\}}-\int_0^{{\rm e}_n}\frac{du}{X_u}\ind_{\{\tau^{(n)}=\infty\}}\right)
+\left(\int_0^{{\rm e}_n}\frac{du}{X_u}
\ind_{\{\tau=\infty\}}-\int_0^{{\rm e}_n}\frac{du}{X_u}\ind_{\{\tau^{(n)}=\infty\}}\right)\\
&&+\int_{{\rm e}_n}^{\infty}\frac{du}{X_u}\ind_{\{\tau=\infty\}}.
\end{eqnarray*}
From dominated convergence and Lemma \ref{3872}, the last term of the right hand side of this 
inequality tends to 0 in expectation. (Note that since $X^{(n)}\le X$ both terms between parentheses  
are non negative.) Then the expectation of each of the four terms between the parenthesis tends to 
$\e_x\left(\int_0^{\infty}\frac{du}{X_u}\ind_{\{\tau=\infty\}}\right)$, as $n\rightarrow\infty$.
For the first and the third term, this is a direct consequence of Lemma \ref{3872}.
It only remains to check that 
$\lim_{n\rightarrow\infty}\e_x\left(\int_0^{{\rm e}_n}\frac{du}{X_u}\ind_{\{\tau^{(n)}=\infty\}}\right)=
\e_x\left(\int_0^{\infty}\frac{du}{X_u}\ind_{\{\tau=\infty\}}\right)$.
But again it follows from Lemma \ref{3872} and dominated convergence.
\end{proof}

Let us recall some facts on scale functions that will be useful for the proof of the next lemma. We refer to 
\cite{kkr} for more details. The scale function $W^{(n)}$ of $X^{(n)}$ is a continuous increasing function 
whose Laplace transform is given by
\begin{equation}\label{6726}
\int_0^\infty e^{-\lambda x}W^{(n)}(x)\,dx=\frac{1}{|\phi^{(n)}|(\lambda)}\,,\;\;\lambda\ge\rho_n\,.
\end{equation}
Recall also that this function admits the following representation,
\begin{equation}\label{6725}
W^{(n)}(x)=e^{\rho_n x}\int_0^xe^{-\rho_n u}\,U_n(du)\,,\;\;x\ge0\,,
\end{equation}
where $U_n$ is the potential measure of the upward ladder height process $H^{(n)}$ of $X^{(n)}$, see (40) in 
\cite{kkr}. 
Note also that we from a general result for subordinators, see Proposition III.1 in 
\cite{be} and inequality (5) in its proof , there is a universal constant $C$ such that,
\begin{equation}\label{6724}
U_n([0,x])\le C\frac{|1/x-\rho_n|}{|\phi^{(n)}|(1/x)}\,,\;\;x>0\,.
\end{equation}
In particular, $C$ depends neither on $x$ nor on $n$.

\begin{lemma}\label{5622}
There is a constant $C\in(0,\infty)$ such that for all $x\ge0$ 
and $n\ge1$, 
\[\e_x\left(\int^{\infty}_{0}\frac{du}{X^{(n)}_u}
\ind_{\left\{\tau^{(n)}=\infty,\,X^{(n)}_u<1/h(n)\right\}}\right)\le C\,.\]
If $(x_n)$ is any sequence such that $x_n\ge 1/h(n)$ for all $n$, then
\[\lim_{n\rightarrow\infty}\e_{x_n}\left(\int^{\infty}_{0}\frac{du}{X^{(n)}_u}
\ind_{\left\{\tau^{(n)}=\infty,\,X^{(n)}_u<1/h(n)\right\}}\right)=0\,.\]
\end{lemma}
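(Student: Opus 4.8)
The plan is to reduce everything to an explicit formula for the occupation-type functional, analogous to the computation in Lemma \ref{3722}, and then estimate the resulting integral using the scale-function bounds \eqref{6726}–\eqref{6724}. First I would note that, by conditioning $X^{(n)}$ to stay positive (equivalently, weighting by the probability that the post-$u$ infimum stays above $-x$, which equals $1-e^{-\rho_n(X^{(n)}_u+x)}$ since $-\underline{X}^{(n)}_\infty$ is exponential with parameter $\rho_n$), one has
\[
\e_x\left(\frac{1}{X^{(n)}_u}\ind_{\{\tau^{(n)}=\infty,\,X^{(n)}_u<y\}}\right)
=\e_x\left(\frac{1}{X^{(n)}_u}\ind_{\{\underline{X}^{(n)}_u\ge -x,\,X^{(n)}_u<y\}}\bigl(1-e^{-\rho_n(X^{(n)}_u+x)}\bigr)\right).
\]
Integrating in $u$ over $[0,\infty)$ turns $\int_0^\infty \e_x(\,\cdot\, ,\underline{X}^{(n)}_u\ge -x)\,du$ into the potential measure of $X^{(n)}$ killed on exiting $(0,\infty)$ from above, which is expressed through the scale function $W^{(n)}$: the $q=0$ resolvent density of $X^{(n)}$ killed at $\tau^{(n)}_{-x}\wedge\tau^{(n),+}_{\,y}$ is $W^{(n)}(x)W^{(n)}(y-z)/W^{(n)}(y)-W^{(n)}(x-z)$ for $z\in(0,y)$ (after the shift $z\mapsto z+x$). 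This yields, with $y=1/h(n)$,
\[
\e_x\left(\int_0^\infty\frac{du}{X^{(n)}_u}\ind_{\{\tau^{(n)}=\infty,\,X^{(n)}_u<1/h(n)\}}\right)
=\int_0^{1/h(n)}\frac{1-e^{-\rho_n z}}{z}\left(\frac{W^{(n)}(x)\,W^{(n)}(1/h(n)-z)}{W^{(n)}(1/h(n))}-W^{(n)}(x-z)\ind_{\{z<x\}}\right)dz,
\]
possibly up to a further splitting to handle the $z\ge x$ range cleanly.

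The key estimate is then purely analytic. For the uniform bound, I would drop the negative term, bound $(1-e^{-\rho_n z})/z\le \rho_n\wedge z^{-1}$, and use monotonicity of $W^{(n)}$ together with $W^{(n)}(x)\le W^{(n)}(1/h(n))$ (which needs $x<1/h(n)$, exactly the indicator present, so after intersecting with $\{X^{(n)}_u<1/h(n)\}$ one may as well assume $x<1/h(n)$ by first-passage splitting) to reduce to $\int_0^{1/h(n)}(\rho_n\wedge z^{-1})\,W^{(n)}(1/h(n)-z)/W^{(n)}(1/h(n))\,dz$. Substituting $w=1/h(n)-z$ and using $W^{(n)}(1/h(n))\ge W^{(n)}(1/(2h(n)))$ one bounds the tail $z\in(1/(2h(n)),1/h(n))$ by $\rho_n$ times $\int_0^{1/(2h(n))}W^{(n)}(w)\,dw\,/\,W^{(n)}(1/h(n))$, and this ratio is $O(1/(\rho_n))$ by \eqref{6726} and the behaviour of $|\phi^{(n)}|$ near $\rho_n$; the part $z\in(0,1/(2h(n)))$ contributes $\int_{1/(2h(n))}^{1/h(n)} z^{-1}\,dz\cdot\sup_w W^{(n)}(w)/W^{(n)}(1/h(n))=O(1)$ after another application of \eqref{6725}–\eqref{6726}. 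The representation \eqref{6725} and the universal bound \eqref{6724} on $U_n$ are what make all constants independent of $n$; this uniformity is the point of stating \eqref{6724} with a constant $C$ not depending on $x$ or $n$.

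For the second assertion, with starting point $x_n\ge 1/h(n)$, the indicator $\{X^{(n)}_{u}<1/h(n)\}$ together with $X^{(n)}_0=x_n\ge 1/h(n)$ forces $X^{(n)}$ to have already gone below $1/h(n)$, i.e.\ to have jumped down (spectrally positive processes have no negative jumps, so in fact $X^{(n)}$ is continuous at downcrossings and the event requires $\inf_{v\le u}X^{(n)}_v<1/h(n)$); by the strong Markov property at the first passage time $T_n$ of $(-\infty,1/h(n))$ and the fact that at $T_n$ the process is exactly at $1/h(n)$, one reduces to $\e_{1/h(n)}\big(\int_0^\infty\frac{du}{X^{(n)}_u}\ind_{\{\tau^{(n)}=\infty,\,X^{(n)}_u<1/h(n)\}}\big)$ times $\p_{x_n}(T_n<\tau^{(n)}_0)\le 1$. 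So it suffices to prove the limit is $0$ when the starting point is $x_n=1/h(n)$ itself. There, in the formula above the term $W^{(n)}(x)W^{(n)}(1/h(n)-z)/W^{(n)}(1/h(n))$ becomes $W^{(n)}(1/h(n)-z)$ and cancels against $-W^{(n)}(1/h(n)-z)$ from the second term (after the shift), leaving only a boundary remainder supported near $z=0$ of size controlled by $W^{(n)}$ near $0$; since $W^{(n)}(0+)$ is bounded and $(1-e^{-\rho_n z})/z\to\rho_n$ is integrable near $0$ only against a factor $z$, a direct estimate together with $h(n)\to0$ gives a bound of the form $O(h(n)^{\kappa})$ or at least $o(1)$. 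I expect the main obstacle to be making the resolvent-density identity rigorous — i.e.\ correctly identifying the Green function of $X^{(n)}$ killed on exiting an interval, with the right treatment of the starting point relative to $1/h(n)$ and of the conditioning-to-stay-positive factor $1-e^{-\rho_n(X^{(n)}_u+x)}$ — and then squeezing out the cancellation in the $x_n=1/h(n)$ case cleanly; the subsequent analytic estimates are routine given \eqref{6724} and the known asymptotics of $\phi^{(n)}$ near $\rho_n$.
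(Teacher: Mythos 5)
Your central identity --- identifying the occupation functional with the two-sided-exit Green function of $X^{(n)}$ killed on leaving $(0,\,1/h(n))$ --- is structurally wrong, not merely in need of tidying. The quantity
\[\e_x\left(\int_0^\infty\frac{du}{X^{(n)}_u}\,\ind_{\left\{\tau^{(n)}=\infty,\,X^{(n)}_u<1/h(n)\right\}}\right)\]
accumulates occupation time in $(0,1/h(n))$ over \emph{all} of $[0,\infty)$: once $X^{(n)}$ has passed above $1/h(n)$ it may (and, for a spectrally positive process drifting to $+\infty$, with positive probability does) come back continuously below $1/h(n)$, and that later occupation still counts. The expression you wrote,
\[\int_0^{1/h(n)}\frac{1-e^{-\rho_nz}}{z}\left(\frac{W^{(n)}(x)\,W^{(n)}(1/h(n)-z)}{W^{(n)}(1/h(n))}-W^{(n)}(x-z)\ind_{\{z<x\}}\right)dz,\]
is the resolvent of the process \emph{killed at the first exit} of $(0,1/h(n))$ --- a strictly smaller object, not equal to the left-hand side. (Also, the factor $1-e^{-\rho_nz}$ encodes $\ind_{\{\tau^{(n)}=\infty\}}$ only through the Markov identity $\p\left(\tau^{(n)}=\infty\mid\mathcal{F}_u\right)=(1-e^{-\rho_nX_u^{(n)}})\ind_{\{\underline{X}^{(n)}_u\ge0\}}$, which is tied to killing at level $0$ alone; it does not combine cleanly with a second absorbing barrier at $1/h(n)$.) What actually works, and is what the paper does, is simpler: extend the identity (\ref{8111}) --- the potential density of $X^{(n)}$ weighted by $\ind_{\{\tau^{(n)}=\infty\}}$, namely $(1-e^{-\rho_ny})\bigl(e^{-\rho_nx}W^{(n)}(y)-W^{(n)}(y-x)\ind_{\{y\ge x\}}\bigr)$, obtained from (\ref{2506}) --- from exponential test functions to the nonnegative measurable function $y\mapsto y^{-1}\ind_{\{y<1/h(n)\}}$, giving (\ref{2688}). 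No two-sided exit problem is introduced; the restriction to $\{X^{(n)}_u<1/h(n)\}$ is a restriction of the \emph{spatial} integral only.

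The same error surfaces as an internal inconsistency in your second part. Plugging $x=1/h(n)$ into your claimed formula forces exact cancellation, $W^{(n)}(1/h(n)-z)-W^{(n)}(1/h(n)-z)\equiv0$, so your expression returns $0$ identically; but $\e_{1/h(n)}(\cdots)$ is strictly positive (the process started at $1/h(n)$ immediately spends time below $1/h(n)$ with positive probability). There is no hidden ``boundary remainder near $z=0$'' --- the cancellation is exact and gives the wrong answer, which is a sanity check your formula fails. Your reduction of the second assertion to $x_n=1/h(n)$ via the strong Markov property at the continuous downcrossing of $1/h(n)$ is itself fine, and the tools you invoke --- (\ref{6726}), (\ref{6725}), (\ref{6724}) together with the bound $W^{(n)}(y)\le Ce^{\rho_n y}$ and the behaviour of $\phi^{(n)}$ near $\rho_n$ --- are exactly the right ones; they just have to be applied to (\ref{2688}). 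There the paper splits $\int_0^{x}+\int_{x}^{1/h(n)}$ for $x\le1/h(n)$, rewrites the second piece through the $U_n$-representation (\ref{6725}) and controls it uniformly in $n$ by (\ref{6724}), and for $x\ge1/h(n)$ only the first piece survives, bounded by $C\int_0^{1/h(n)}\tfrac{1-e^{-\rho y}}{y}e^{-\rho_0(x-y)}\,dy\to0$ as $x\to\infty$, which yields the second assertion.
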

\begin{proof} Dividing the expression (\ref{2506}) for $X^{(n)}$ by $\varepsilon$ and letting $\varepsilon$ 
going to 0, gives, for all $\lambda\ge\rho_n$,
\begin{eqnarray}
&&\e_x\left(\int_0^\infty e^{-\lambda X_u^{(n)}}\ind_{\{\tau^{(n)}=\infty\}}\,du\right)=
\frac{e^{-\rho_nx}-e^{-\lambda x}}{\varphi^{(n)}(\lambda)}-\frac{e^{-\rho_nx}-e^{-(\lambda+\rho_n)x}}
{\varphi^{(n)}(\lambda+\rho_n)}\nonumber\\
&=&\int_0^{\infty}e^{-\lambda y}(1-e^{-\rho_ny})
\left(e^{-\rho_nx}W^{(n)}(y)-W^{(n)}(y-x)\ind_{\{y\ge x\}}\right)\,dy,\label{8111}
\end{eqnarray}
where the second equality follows from (\ref{6726}) (for $X^{(n)}$). The set of functions $y\mapsto e^{-\lambda y}$,
$\lambda\ge\rho_n$, for any fixed $n\ge0$, is total in the vector space of continuous functions on $(0,\infty)$.
Therefore, through classical arguments, we can extend the last identity to any non negative, measurable function
defined on $(0,\infty)$. In particular, replacing $y\mapsto e^{-\lambda y}$ by 
$\displaystyle y\mapsto \frac1y\ind_{\{y<1/h(n)\}}$ gives,
\begin{eqnarray}
&&\e_x\left(\int^{\infty}_{0}\frac{du}{X^{(n)}_u}
\ind_{\left\{\tau^{(n)}=\infty,\,X^{(n)}_u<1/h(n)\right\}}\right)\nonumber\\
&=&\int_0^{1/h(n)}\frac{1-e^{-\rho_ny}}{y}
\left(e^{-\rho_nx}W^{(n)}(y)-W^{(n)}(y-x)\ind_{\{y\ge x\}}\right)\,dy\label{2688}\\
&=&\int_0^{x}\frac{1-e^{-\rho_ny}}{y}e^{-\rho_nx}W^{(n)}(y)\,dy+
\int_{x}^{1/h(n)}\frac{1-e^{-\rho_ny}}{y}\left(e^{-\rho_nx}W^{(n)}(y)-W^{(n)}(y-x)\right)\,dy\,,\nonumber
\end{eqnarray}
where we assume first that $x\le 1/h(n)$. Note that the sequence 
$(\phi^{(n)'}(\rho_n))=(\phi'(\rho_n+\varepsilon_n))$ tends to $\phi'(\rho)<0$. Hence it is bounded away 
from 0 so that from (53) in the proof of Lemma 3.3 in \cite{kkr}, there is a constant 
$C$ that depends neither on $x$ nor on $n$, such that $W^{(n)}(x)\le Ce^{\rho_n x}$, for all $x\ge0$. Since 
$(\rho_n)$ is increasing and tends to $\rho$, we obtain the bound,
\begin{equation}\label{0354}
\int_0^{x}\frac{1-e^{-\rho_ny}}{y}e^{-\rho_nx}W^{(n)}(y)\,dy\le C\int_0^{x}
\frac{1-e^{-\rho y}}{y}e^{-\rho_0(x-y)}\,dy\,,
\end{equation}
and we readily show that the right hand side of this inequality tends to 0 as $x$ tends to $\infty$. 

Now let us consider the second term of the last member of (\ref{2688}) and apply (\ref{6725}) in order to obtain, 
\begin{eqnarray}
&&\int_{x}^{1/h(n)}\frac{1-e^{-\rho_ny}}{y}\left(e^{-\rho_nx}W^{(n)}(y)-W^{(n)}(y-x)\right)\,dy\nonumber\\
&&=\int_{x}^{1/h(n)}\frac{1-e^{-\rho_ny}}{y}e^{-\rho_n x}\left(\int_{y-x}^{y}e^{-\rho_n(u-y)}U_n(du)\right)\,dy\,.
\label{1064}
\end{eqnarray}
Assume first that $x$ is less than some constant $c>0$, that is $x\le c$, and write
\begin{eqnarray}
&&\int_{x}^{1/h(n)}\frac{1-e^{-\rho_ny}}{y}e^{-\rho_n x}\left(\int_{y-x}^{y}e^{-\rho_n(u-y)}U_n(du)\right)\,dy
\le\int_{x}^{1/h(n)}\frac{1-e^{-\rho_ny}}{y}\int_{y-x}^{y}U_n(du)\,dy\nonumber\\
&\le&\int_{0}^{1/h(n)}\int_{u}^{u+x}\frac{1-e^{-\rho y}}{y}\,dy\,U_n(du)\nonumber\\
&=&\int_{0}^{1}\int_{u}^{u+x}\frac{1-e^{-\rho y}}{y}\,dy\,U_n(du)+\int_{1}^{1/h(n)}
\int_{u}^{u+x}\frac{1-e^{-\rho y}}{y}\,dy\,U_n(du)\,.\label{2477}
\end{eqnarray}
The function $(x,u)\mapsto\int_{u}^{u+x}\frac{1-e^{-\rho y}}{y}\,dy$ is continuous on $[0,c]\times[0,1]$ and
from the convergence of $X^{(n)}$ toward $X$ we derive that the measure $U_n(du)$ converges weakly toward $U(du)$.
Hence the first term of (\ref{2477}) is bounded uniformly in $x\in[0,c]$ and $n\ge0$. 
On the other hand, an integration by part gives for the second term,
\begin{eqnarray*}
&&\int_{1}^{1/h(n)}\int_{u}^{u+x}\frac{1-e^{-\rho y}}{y}\,dy\,U_n(du)\le\int_1^{1/h(n)}\frac cu U_n(du)\\
&&=\int_1^{1/h(n)}\frac c{u^2}U_n([0,u])\,du+c\frac{U_n([0,1/h(n)])}{1/h(n)}-cU_n([0,1])\,,
\end{eqnarray*}
and (\ref{6724}) yields after a change of variable, 
\begin{equation}\label{1006}
\int_1^{1/h(n)}\frac cuU_n(du)\le C\left(\int_{h(n)}^{1}\frac{v-\rho_n}{\varphi^{(n)}(v)}\,dv+
\left(h(n)-\rho_n\right)\frac{h(n)}{\varphi^{(n)}(h(n))}\right)\,.
\end{equation}
Our assumption on $h$ (i.e.~$h\in\mathcal{Z}_0$) and the fact that 
$\lim_{n\rightarrow\infty}\frac{h(n)}{\varphi^{(n)}(h(n))}=0$ shows that the term above is also uniformly 
bounded  in $x\in[0,c]$ and $n\ge0$. 

Assume now that $x\ge c$ and note that the right hand side of (\ref{1064}) is less than
\begin{eqnarray*}
\int_{x}^{1/h(n)}\frac{e^{-\rho_n x}}y\left(\int_{y-x}^{y}e^{-\rho_n(u-y)}U_n(du)\right)\,dy
=e^{-\rho_n x}\int_{0}^{1/h(n)}\int_{u\vee x}^{(u+x)\wedge 1/h(n)}\frac{e^{\rho_n(y-u)}}y\,dy\,U_n(du)\,.
\end{eqnarray*}
Then on the one hand, 
\begin{eqnarray*}
e^{-\rho_n x}\int_{0}^{c/2}\int_{u\vee x}^{(u+x)\wedge 1/h(n)}\frac{e^{\rho_n(y-u)}}y\,dy\,U_n(du)&=&
e^{-\rho_n x}\int_{0}^{c/2}\int_{u\vee x-u}^{(u+x)\wedge 1/h(n)-u}\frac{e^{\rho_n v}}{u+v}\,dv\,U_n(du)\\
&\le&e^{-\rho_n x}\int_{0}^{c/2}\int_{x-u}^{x}\frac{e^{\rho_n v}}{u+v}\,dv\,U_n(du)\\
&\le&\int_{0}^{c/2}\int_{x-c/2}^{x}\frac{1}{u+v}\,dv\,U_n(du)\\
&\le&\frac c2\int_{0}^{c/2}\frac{1}{u+x-c/2}\,U_n(du)\le U_n([0,c/2])\,.
\end{eqnarray*}
Since $U_n([0,c/2])$ converges toward $U([0,c/2])$ this last term is uniformly bounded in $x\ge c$ and $n\ge0$.
On the other hand, 
\begin{eqnarray*}
&&e^{-\rho_n x}\int_{c/2}^{1/h(n)}\int_{u\vee x}^{(u+x)\wedge 1/h(n)}\frac{e^{\rho_n(y-u)}}y\,dy\,U_n(du)=
e^{-\rho_n x}\int_{c/2}^{1/h(n)}\int_{u\vee x-u}^{(u+x)\wedge 1/h(n)-u}\frac{e^{\rho_n v}}{u+v}\,dv\,U_n(du)\\
&&\le e^{-\rho_n x}\int_{c/2}^{1/h(n)}\int_{0}^{x}\frac{e^{\rho_n v}}{u+v}\,dv\,U_n(du)\le
e^{-\rho_n x}\int_{c/2}^{1/h(n)}\frac{e^{\rho_n x}-1}{\rho_n u}\,U_n(du)\le
\int_{c/2}^{1/h(n)}\frac{1}{\rho_n u}\,U_n(du)\,.
\end{eqnarray*}
But this last term can be bounded uniformly in $x\ge c$ and $n\ge0$ exactly as in (\ref{1006}).
Then we proved that the term in (\ref{2688}) is bounded by some constant for all $n$ and $x$ such that $x\le 1/h(n)$.

Now for $n$ and $x$ such that $x\ge 1/h(n)$, the term in (\ref{2688}) becomes 
\[\int_0^{x}\frac{1-e^{-\rho_ny}}{y}e^{-\rho_nx}W^{(n)}(y)\,dy\,.\]
But then the bound obtained in (\ref{0354}) is still valid and gives the result in this case. 
The second assertion of the lemma also follows from this argument. 
\end{proof}

We define the first passage time above the level $x\ge0$ of $X^{(n)}$ by
\[\tau^{(n)}_{x}=\inf\{t:X^{(n)}_t\ge x\}\,.\]

\begin{lemma}\label{5322}
Let ${\rm e}_n$ be as in Lemma $\ref{3722}$. Then, for all $x>0$,  
\[\lim_{n\rightarrow\infty}\int_{\tau^{(n)}_{1/h(n)}}^{{\rm e}_n}\frac{du}{X^{(n)}_u}
\ind_{\{\tau^{(n)}=\infty\}}\xrightarrow[n\to\infty]{\mathrm{L}^1(\p_x)}0.\]
\end{lemma}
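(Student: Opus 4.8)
The plan is to read the integral $\int_{\tau^{(n)}_{1/h(n)}}^{{\rm e}_n}$ as a signed Lebesgue integral, so that the statement amounts to $A_n+B_n\to0$, where
\[A_n:=\e_x\!\left(\int_{\tau^{(n)}_{1/h(n)}}^{{\rm e}_n}\frac{du}{X^{(n)}_u}\ind_{\{\tau^{(n)}=\infty,\,\tau^{(n)}_{1/h(n)}<{\rm e}_n\}}\right),\qquad B_n:=\e_x\!\left(\int_{{\rm e}_n}^{\tau^{(n)}_{1/h(n)}}\frac{du}{X^{(n)}_u}\ind_{\{\tau^{(n)}=\infty,\,{\rm e}_n<\tau^{(n)}_{1/h(n)}\}}\right).\]
On $\{\tau^{(n)}=\infty\}$ the L\'evy process $X^{(n)}$ drifts to $\infty$ and stays positive, hence $\tau^{(n)}_{1/h(n)}<\infty$ there; both $A_n,B_n$ are then non negative and can be treated separately.

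For $B_n$, I would first condition on the value of ${\rm e}_n$ and then apply the Markov property at that (now deterministic) time. On $\{{\rm e}_n=t<\tau^{(n)}_{1/h(n)}\}$ one has $X^{(n)}_t<1/h(n)$, and writing $\hat X$ for the shifted process, $\int_t^{\tau^{(n)}_{1/h(n)}}du/X^{(n)}_u=\int_0^{\hat\tau_{1/h(n)}}ds/\hat X_s\le\int_0^\infty(ds/\hat X_s)\ind_{\{\hat X_s<1/h(n),\,\hat\tau=\infty\}}$, whose $\p_{X^{(n)}_t}$-expectation is $\le C$, the universal constant of Lemma \ref{5622}. Integrating the indicator $\ind_{\{t<\tau^{(n)}_{1/h(n)}\}}$ against the exponential law of ${\rm e}_n$ yields $B_n\le C\,\p_x({\rm e}_n<\tau^{(n)}_{1/h(n)})\le C\,\p_x(X^{(n)}_{{\rm e}_n}\le1/h(n))$. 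The last probability is bounded by a Chernoff estimate with exponent $h(n)$: since $\e_x(e^{-h(n)X^{(n)}_{{\rm e}_n}})=e^{-h(n)x}\,h(n)/(h(n)+\phi^{(n)}(h(n)))$ and $\phi^{(n)}(h(n))\ge h(n)\,\phi'(h(n)+\varepsilon_n)$ by $(\ref{concav})$, one gets, for $n$ large, $\p_x(X^{(n)}_{{\rm e}_n}\le1/h(n))\le e/(1+\phi'(h(n)+\varepsilon_n))\to0$ because $\phi'(0)=\infty$; hence $B_n\to0$.

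For $A_n$, I would use the strong Markov property of $X^{(n)}$ at the stopping time $\tau^{(n)}_{1/h(n)}$ together with the lack of memory of ${\rm e}_n$ on $\{{\rm e}_n>\tau^{(n)}_{1/h(n)}\}$: bounding $\ind_{\{\tau^{(n)}=\infty\}}$ by the indicator that $X^{(n)}$ stays positive after $\tau^{(n)}_{1/h(n)}$, and denoting $\Xi_n:=X^{(n)}_{\tau^{(n)}_{1/h(n)}}\ge1/h(n)$ (an overshoot is possible, $X^{(n)}$ being spectrally positive), one obtains $A_n\le\e_x\!\big(e^{-h(n)\tau^{(n)}_{1/h(n)}}g_n(\Xi_n)\big)\le g_n(1/h(n))$, where $g_n(y):=\e_y\big(\int_0^{{\rm e}_n}(du/X^{(n)}_u)\ind_{\{\tau^{(n)}=\infty\}}\big)$ is non increasing in $y$ by a pathwise comparison and $g_n(1/h(n))$ is given in closed form by $(\ref{4066})$. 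It then remains to prove $g_n(1/h(n))\to0$. From $(\ref{4066})$, $g_n(1/h(n))\le\int_0^{\rho_n}e^{-\lambda/h(n)}\,d\lambda/(h(n)+\phi^{(n)}(\lambda))$, which I would split at a fixed $\theta_0\in(0,\gamma)$. On $(\theta_0,\rho_n)$ the factor $e^{-\lambda/h(n)}\le e^{-\theta_0/h(n)}$ together with the bound $\int_0^{\rho_n}d\lambda/(h(n)+\phi^{(n)}(\lambda))=O(|\log h(n)|)$ obtained inside the proof of Lemma \ref{3872} kills that piece. On $(0,\theta_0)$, the change of variable $\lambda=h(n)\mu$ and the inequality $\phi^{(n)}(h(n)\mu)\ge h(n)\mu\,\phi'(h(n)\mu+\varepsilon_n)$ from $(\ref{concav})$ bound the integral, for each fixed $M>0$, by $\int_0^M e^{-\mu}\,d\mu/(1+\mu\,\phi'(Mh(n)+\varepsilon_n))+e^{-M}$; letting $n\to\infty$ (dominated convergence, using $\phi'(Mh(n)+\varepsilon_n)\to\infty$) and then $M\to\infty$ shows this piece tends to $0$ as well. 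Hence $g_n(1/h(n))\to0$ and $A_n\to0$.

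The main difficulty is the asymptotic estimate $g_n(1/h(n))\to0$: although a closed expression is available through $(\ref{4066})$, its integrand is singular at both endpoints of $(0,\rho_n)$, and one must use simultaneously that $\phi'$ explodes at $0$ (which controls the mass near $0$, the only region where $e^{-\lambda/h(n)}$ is not exponentially small) and the logarithmic bound near $\rho_n$, absorbed by the factor $e^{-\theta_0/h(n)}$. The probabilistic reductions $A_n\le g_n(1/h(n))$ and $B_n\le C\,\p_x({\rm e}_n<\tau^{(n)}_{1/h(n)})$ are routine, provided one conditions on ${\rm e}_n$ in the right order in the second case and keeps track of the possible overshoot of $X^{(n)}$ at $\tau^{(n)}_{1/h(n)}$ in the first.
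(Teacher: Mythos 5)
Your split into $A_n$ (the event $\tau^{(n)}_{1/h(n)}<{\rm e}_n$) and $B_n$ (the event ${\rm e}_n<\tau^{(n)}_{1/h(n)}$) is exactly the paper's decomposition, and your reductions via the strong Markov property, memorylessness of ${\rm e}_n$, and Lemma \ref{5622} are the same as well. Two genuine points of departure are worth recording. For $B_n$, the paper does not use a Chernoff estimate: instead it applies the Markov property at ${\rm e}_n$ to write the quantity as $\e_x\bigl(\ind_{\{\underline{X}^{(n)}_{{\rm e}_n}\ge0\}}\e_{X^{(n)}_{{\rm e}_n}}(\cdots)\bigr)$ and then invokes the law of large numbers ($h(n)X^{(n)}_{{\rm e}_n}\to\infty$ a.s.) together with the \emph{second} assertion of Lemma \ref{5622}. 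Your Chernoff bound $\p_x(X^{(n)}_{{\rm e}_n}\le1/h(n))\le e/(1+\phi'(h(n)+\varepsilon_n))\to0$ is a clean and arguably more elementary alternative; it bypasses the LLN and the second statement of Lemma \ref{5622} entirely. For $A_n$, the paper stops at the closed form of $E_{1/h(n)}\bigl(\int_0^{{\rm e}_n}(du/X^{(n)}_u)\ind_{\{\tau^{(n)}=\infty\}}\bigr)$ and asserts that it tends to $0$ ``from Lemma \ref{3872} and dominated convergence,'' which is terse since Lemma \ref{3872} is a fixed-$x$ statement and here the starting point $1/h(n)\to\infty$. Your explicit two-regime estimate (the $e^{-\theta_0/h(n)}$ factor killing the $O(|\log h(n)|)$ tail, and the change of variable $\lambda=h(n)\mu$ with $\phi'(Mh(n)+\varepsilon_n)\to\infty$ on the body) supplies exactly the justification the paper leaves implicit; that part is correct and valuable.

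There is, however, one step you assert that is not correct as stated: you claim that $g_n(y):=\e_y\bigl(\int_0^{{\rm e}_n}(du/X^{(n)}_u)\ind_{\{\tau^{(n)}=\infty\}}\bigr)$ is ``non increasing in $y$ by a pathwise comparison.'' This is false pathwise. Coupling $X^{(n)}$ under $\p_{y}$ and $\p_{y'}$ with $y<y'$ by the same noise $W$ (so $X^{(n)}_u=y+W_u$ resp. $y'+W_u$), the integral $\int_0^{{\rm e}_n}du/(y+W_u)$ does decrease in $y$, but the indicator $\ind_{\{\inf_s (y+W_s)>0\}}$ \emph{increases} in $y$; on the event $\{-y'<\inf_s W_s\le -y\}$ the $y$-term vanishes while the $y'$-term is strictly positive, so the integrand is not monotone in $y$ for a fixed path, and indeed $g_n(0^+)=0<g_n(y)$ for moderate $y$ while $g_n(\infty)=0$, so $g_n$ cannot be globally monotone. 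What is actually needed is only $g_n(\Xi_n)\le g_n(1/h(n))$ with $\Xi_n\ge1/h(n)$, i.e.\ monotonicity of $g_n$ on $[1/h(n),\infty)$. To be fair, the paper invokes the same inequality (``from the strong Markov property, the inequality $1/h(n)\le X^{(n)}(\tau^{(n)}_{1/h(n)})$ \dots'') without further justification, so you are not worse off than the source; but the stated reason (pathwise comparison) is wrong and should be replaced by an analytic argument on the explicit formula $g_n(y)=\int_0^{\rho_n}(e^{-\lambda y}-e^{-\psi^{(n)}(h(n))y})/(h(n)+\phi^{(n)}(\lambda))\,d\lambda$ restricted to $y\ge1/h(n)$, or circumvented by a direct estimate of $\e_x(g_n(\Xi_n))$.
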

\begin{proof} Let us first show that 
\begin{equation}\label{8477}\lim_{n\rightarrow\infty}\int_{\tau^{(n)}_{1/h(n)}}^{{\rm e}_n}\frac{du}{X^{(n)}_u}
\ind_{\left\{\tau^{(n)}=\infty,\,\tau^{(n)}_{1/h(n)}<{\rm e}_n\right\}}\xrightarrow[n\to\infty]{\mathrm{L}^1(\p_x)}0.
\end{equation}
Note that 
\[\int_{\tau^{(n)}_{1/h(n)}}^{{\rm e}_n}\frac{du}{X^{(n)}_u}
\ind_{\left\{\tau^{(n)}=\infty,\,\tau^{(n)}_{1/h(n)}<{\rm e}_n\right\}}\le
\int_{\tau^{(n)}_{1/h(n)}}^{\tau^{(n)}_{1/h(n)}+{\rm e}_n}\frac{du}{X^{(n)}_u}
\ind_{\left\{\tau^{(n)}=\infty\right\}}\,.\]
Therefore, from the strong Markov property, the inequality $1/h(n)\le X^{(n)}(\tau^{(n)}_{1/h(n)})$, $\p_x$-a.s. 
and Lemma \ref{3722}, 
\begin{eqnarray*}
\e_x\left(\int_{\tau^{(n)}_{1/h(n)}}^{{\rm e}_n}\frac{du}{X^{(n)}_u}
\ind_{\left\{\tau^{(n)}=\infty,\,\tau^{(n)}_{1/h(n)}<{\rm e}_n\right\}}\right)&\le&
E_{1/h(n)}\left(\int_{0}^{{\rm e}_n}\frac{du}{X^{(n)}_u}\ind_{\left\{\tau^{(n)}=\infty\right\}}\right)\\
&=&\int_0^{\rho_n} \frac{e^{-\lambda n}-e^{-\psi^{(n)}(h(n)) n}}{h(n)+\phi^{(n)}(\lambda)} 
d\lambda
\end{eqnarray*}
and this last term tends to 0 from Lemma \ref{3722} and dominated convergence. Hence (\ref{8477}) is proved.  

It remains to show that 
\begin{equation}\label{8447}
\lim_{n\rightarrow\infty}\int^{\tau^{(n)}_{1/h(n)}}_{{\rm e}_n}\frac{du}{X^{(n)}_u}
\ind_{\left\{\tau^{(n)}=\infty,\,\tau^{(n)}_{1/h(n)}>{\rm e}_n\right\}}\xrightarrow[n\to\infty]{\mathrm{L}^1(\p_x)}0.
\end{equation}
Let us note that 
\[\e_x\left(\int^{\tau^{(n)}_{1/h(n)}}_{{\rm e}_n}\frac{du}{X^{(n)}_u}
\ind_{\left\{\tau^{(n)}=\infty,\,\tau^{(n)}_{1/h(n)}>{\rm e}_n\right\}}\right)\le
\e_x\left(\int^{\infty}_{{\rm e}_n}\frac{du}{X^{(n)}_u}
\ind_{\left\{\tau^{(n)}=\infty,\,X^{(n)}_u<1/h(n)\right\}}\right)\,,\]
and
\begin{eqnarray*}
&&\e_x\left(\int^{\infty}_{{\rm e}_n}\frac{du}{X^{(n)}_u}
\ind_{\left\{\tau^{(n)}=\infty,\,X^{(n)}_u<1/h(n)\right\}}\right)=
\e_x\left(\ind_{\{\underline{X}_{e_n}^{(n)}\ge0\}}\left(\int^{\infty}_{0}\frac{du}{X^{(n)}_u}
\ind_{\{\tau^{(n)}=\infty,\,X^{(n)}_u<1/h(n)\}}\right)\circ\theta_{e_n}\right)\\
&&=\e_x\left(\ind_{\{\underline{X}_{e_n}^{(n)}\ge0\}}\e_{X_{e_n}^{(n)}}\left(\int^{\infty}_{0}\frac{du}{X^{(n)}_u}
\ind_{\{\tau^{(n)}=\infty,\,X^{(n)}_u<1/h(n)\}}\right)\right)\,.
\end{eqnarray*}
From the law of large numbers for L\'evy processes, for all $m\ge0$, 
$\lim_{n\rightarrow\infty}h(n)X^{(m)}_{e_n}=E(X_1^{(m)})=\phi^{(m)'}(0)\le
\lim_{n\rightarrow\infty}h(n)X_{e_n}^{(n)}$, a.s. Since 
$\lim_{m\rightarrow\infty}\phi^{(m)'}(0)=\infty$, we obtain that
$\lim_{n\rightarrow\infty}h(n)X_{e_n}^{(n)}=\infty$, a.s. Then (\ref{8447}) follows from Lemma \ref{5622}
and dominated convergence. 
\end{proof}

We recall the definition of the first passage time above the level $x\ge0$ of $Z^{(n)}$:
\[\sigma^{(n)}_{x}=\inf\{t:Z^{(n)}_t\ge x\}\,.\]

\noindent {\it Proof of Theorem $\ref{0486}$}. Thanks to Corollary \ref{1153} and Lemma \ref{5622}, and using
the decomposition,
\begin{equation*}
    \sigma^{(n)}_{1/h(n)}\1{\tau^{(n)}=\infty} = \int_0^{{\rm e}_n} \frac{du}{X_u^{(n)}}\1{\tau^{(n)}=\infty} - 
    \int^{{\rm e}_n}_{\tau_{1/h(n)}^{(n)}} \frac{du}{X_u^{(n)}}\1{\tau^{(n)}=\infty}\,,
\end{equation*}
we obtain $\displaystyle \sigma^{(n)}_{1/h(n)}\1{\tau^{(n)}=\infty}\xrightarrow[]{L_1}\zeta\1{\zeta<\infty}$. 
Now observe that $\sigma^{(n)}_{1/h(n)}\1{\tau^{(n)}=\infty}\le 
\sigma^{(n)}_{1/h(n)}\1{\sigma^{(n)}_{1/h(n)}<\infty}$, $\p_x$-a.s. Then we shall conclude by proving that,
\begin{equation}\label{1530}
    \lim_{n\to\infty}\e_x\left(\sigma^{(n)}_{1/h(n)}\1{\sigma^{(n)}_{1/h(n)}<\infty} \right) = 
    \lim_{n\to\infty}\e_x\left(\sigma^{(n)}_{1/h(n)}\1{\tau^{(n)}=\infty} \right)\,.
\end{equation}
On the one hand, an application of the Markov property allows us 
to define the process $X^{(n)}$ conditioned to stay positive as follows: for all $t\geq 0$ 
and $\Lambda\in\mathcal{F}_t^{(n)}$, 
\begin{equation*}
\e_x(\ind_\Lambda\,|\,\tau^{(n)}=\infty)=
\e_x\left(\frac{1-e^{-\rho_n X_t^{(n)}}}{1-e^{-\rho_n x}}\ind_\Lambda\ind_{\{t<\tau^{(n)}\}}\right)\,,
\end{equation*}
where $(\mathcal{F}_t^{(n)})$ is the natural filtration generated by the process $X^{(n)}$.
Extending this formula from time $t$ to the stopping time $\tau_{1/h(n)}^{(n)}$ and replacing 
$\ind_\Lambda$ by the $\mathcal{F}(\tau_{1/h(n)}^{(n)})$-measurable functional 
$\int_0^{\tau_{1/h(n)}^{(n)}}du/X^{(n)}_u = \sigma^{(n)}_{1/h(n)}$ in the above 
formula allows us to write, 
\begin{equation*} 
\e_x\left(\left.\sigma^{(n)}_{1/h(n)}\,\right|\,\tau^{(n)}=\infty\right)=
\e_x\left(\frac{1-e^{-\rho_n X^{(n)}(\tau_{1/h(n)}^{(n)})}}{1-e^{-\rho_n x}}
\sigma^{(n)}_{1/h(n)}\ind_{\{\tau_{1/h(n)}^{(n)}<\tau^{(n)}\}}\right)\,.
\end{equation*}
Then from this relation together with the equality 
$\{\sigma^{(n)}_{1/h(n)}<\infty\}=\{\tau^{(n)}_{1/h(n)}<\tau^{(n)}\}$ and the inequality
$X^{(n)}_{\tau_{1/h(n)}^{(n)}}\geq 1/h(n)$, we obtain,
\begin{eqnarray}
\e_x\left(\sigma_{1/h(n)}^{(n)}\ind_{\{\sigma_{1/h(n)}^{(n)}<\infty\}}\right)&=&
\e_x\left(\sigma_{1/h(n)}^{(n)}\ind_{\{\tau_{1/h(n)}^{(n)}<\tau^{(n)}\}}\right)\nonumber\\
&\leq& \e_x\left( \frac{1-e^{-\rho_n x}}{1-e^{-\rho_n/h(n)}} 
\frac{1-e^{-\rho_n X^{(n)}(\tau_{1/h(n)}^{(n)})}}{1-e^{-\rho_n x}}
\sigma_{1/h(n)}^{(n)}\ind_{\{\tau_{1/h(n)}^{(n)}<\tau^{(n)}\}}\right)\nonumber\\
&=&\frac{1}{1-e^{-\rho_n/h(n)}}\e_x\left(\sigma_{1/h(n)}^{(n)}\,\1{\tau^{(n)}=\infty}\right)\,.\label{1371}
\end{eqnarray}
On the other hand, the relation 
\[\{ \sigma^{(n)}_{1/h(n)}<\infty\} = \{ \tau^{(n)}_{1/h(n)}<\tau^{(n)}\}= \{\tau^{(n)}=\infty \}
\cup \{\tau^{(n)}_{1/h(n)}<\tau^{(n)},\tau^{(n)}<\infty\}\,,\]
yields,
\begin{equation*}
    \e_x\left(\sigma_{1/h(n)}^{(n)}\1{\tau^{(n)}=\infty}\right) \leq 
    \e_x\left(\sigma_{1/h(n)}^{(n)}\ind_{\{\sigma_{1/h(n)}^{(n)}<\infty\}}\right)\,.
\end{equation*}
This inequality together with (\ref{1371}) allow us to obtain (\ref{1530}) and concludes the proof.
$\hfill\Box$\\

\subsubsection{Almost sure convergence}

\begin{lemma}\label{67}
    For all $x>0$, there exists a constant $C$ such that for all $n\ge0$, 
    \begin{equation*}
        \e_x\left(\tau_{1/h(n)}^{(n)}\1{\tau^{(n)}=\infty}\right)\leq \frac{C}{\phi^{(n)}(h(n))}\, .
    \end{equation*}
\end{lemma}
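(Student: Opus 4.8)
The plan is to dominate $\tau^{(n)}_{1/h(n)}\1{\tau^{(n)}=\infty}$ pathwise by the time $X^{(n)}$ spends below the level $1/h(n)$ while staying positive, and then to evaluate the latter with the potential‑density identity already obtained in the proof of Lemma \ref{5622}. First I would note that $\{u<\tau^{(n)}_{1/h(n)}\}\subset\{X^{(n)}_u<1/h(n)\}$, whence
\[
\tau^{(n)}_{1/h(n)}\1{\tau^{(n)}=\infty}=\int_0^\infty\1{u<\tau^{(n)}_{1/h(n)},\,\tau^{(n)}=\infty}\,du
\le\int_0^\infty\1{X^{(n)}_u<1/h(n),\,\tau^{(n)}=\infty}\,du .
\]
Taking $\e_x$ and using the identity $(\ref{8111})$, extended from the exponentials $y\mapsto e^{-\lambda y}$ to the indicator $y\mapsto\1{[0,1/h(n))}(y)$ exactly as in the proof of Lemma \ref{5622}, gives
\[
\e_x\!\left(\tau^{(n)}_{1/h(n)}\1{\tau^{(n)}=\infty}\right)
\le\int_0^{1/h(n)}(1-e^{-\rho_n y})\left(e^{-\rho_n x}W^{(n)}(y)-W^{(n)}(y-x)\1{y\ge x}\right)dy=:\int_0^{1/h(n)}g^{(n)}_x(y)\,dy .
\]

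The decisive point is to keep the cancellation inside $e^{-\rho_n x}W^{(n)}(y)-W^{(n)}(y-x)$: bounding this difference crudely by $W^{(n)}(y)$ would be exponentially too large. Plugging the ladder‑height representation $(\ref{6725})$ into this difference yields, for $y\ge x$,
\[
e^{-\rho_n x}W^{(n)}(y)-W^{(n)}(y-x)=e^{\rho_n(y-x)}\int_{(y-x,y]}e^{-\rho_n u}\,U_n(du)\le U_n((y-x,y]),
\]
so that $g^{(n)}_x(y)\le U_n((y-x,y])$ on $\{y\ge x\}$, while for $y\le x$ one has directly $g^{(n)}_x(y)\le e^{-\rho_n x}W^{(n)}(x)\le C$ from the bound $W^{(n)}(z)\le Ce^{\rho_n z}$ recalled in the proof of Lemma \ref{5622}, and hence $\int_0^x g^{(n)}_x(y)\,dy\le Cx$. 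A Fubini computation then gives $\int_x^{1/h(n)}U_n((y-x,y])\,dy\le x\,U_n([0,1/h(n)])$, and the potential‑measure estimate $(\ref{6724})$, applied at the point $1/h(n)$, yields $U_n([0,1/h(n)])\le C|h(n)-\rho_n|/|\phi^{(n)}|(h(n))\le C\rho/\phi^{(n)}(h(n))$, where we use $h(n)<\rho_n\le\rho$ so that $|\phi^{(n)}|(h(n))=\phi^{(n)}(h(n))$. Collecting the two ranges,
\[
\e_x\!\left(\tau^{(n)}_{1/h(n)}\1{\tau^{(n)}=\infty}\right)\le Cx+\frac{C\rho x}{\phi^{(n)}(h(n))},
\]
and since $\phi^{(n)}(h(n))\le\phi(\varepsilon_n+h(n))$ stays bounded in $n$ (it even tends to $0$), the first term is absorbed into the second and the stated bound follows with a constant depending only on $x$.

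The one step that genuinely requires care is the second one: one must pass from the scale functions to the ladder‑height potential $U_n$ through $(\ref{6725})$ and recognize the telescoping increment $U_n((y-x,y])$, which is exactly what makes the comparison with $U_n([0,1/h(n)])$ — and therefore, via $(\ref{6724})$, with $1/\phi^{(n)}(h(n))$ — possible. The pathwise domination, the extension of $(\ref{8111})$ to an indicator, and the Fubini estimate are all routine and mirror manipulations already carried out for Lemma \ref{5622}.
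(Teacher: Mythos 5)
Your proof takes essentially the same route as the paper's: bound the first passage time by the occupation time below the level, express the latter via the identity $(\ref{8111})$ extended to an indicator, telescope the scale-function difference through $(\ref{6725})$ into a $U_n$-increment, integrate by Fubini, and apply the uniform potential bound $(\ref{6724})$. The intermediate estimates in your argument (the bound $g^{(n)}_x(y)\le U_n((y-x,y])$ on $\{y\ge x\}$, the $Cx$ bound on $\{y<x\}$ via $W^{(n)}(z)\le Ce^{\rho_n z}$, and the absorption of the additive constant using $\phi^{(n)}(h(n))\to 0$) match the paper's, up to trivial reorganization.

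The one genuine gap is that the statement covers all $\varphi$ satisfying $(\ref{8378})$, including the case where $X$ is a subordinator. There $\rho_n=\infty$, the scale-function identity $(\ref{6726})$ and the representation $(\ref{6725})$ are not available, and your entire argument breaks down. The paper handles this case separately and quickly: when $X$ is a subordinator, $\tau^{(n)}=\infty$ a.s.~so $\e_x(\tau^{(n)}_{1/h(n)}\1{\tau^{(n)}=\infty})=\e_x(\tau^{(n)}_{1/h(n)})$, and the bound follows directly from Proposition III.1 in \cite{be}. You should add that easy preliminary reduction before launching into the scale-function computation, or state at the outset that the argument treats the non-subordinator case and note how the subordinator case is disposed of.
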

\begin{proof}
If $X$ is a subordinator, then $\e_x(\tau^{(n)}_{1/h(n)}\1{\tau^{(n)}=\infty})=\e_x(\tau^{(n)}_{1/h(n)})$, 
and the result follows from Proposition \rm{III}.1 in \cite{be}. If $X$ is not a subordinator, then write
\begin{equation*}
    \e_x\left(\tau^{(n)}_{1/h(n)}\1{\tau^{(n)}=\infty}\right) \le 
    \e_x\left( \int_0^\infty \1{X_t^{(n)}\leq1/h(n)}\1{\tau^{(n)}=\infty}dt\right)\,.
\end{equation*}
As already justified in the beginning of the proof of Lemma \ref{5622}, formula (\ref{8111}) may be extended 
to the function $y\mapsto \1{y\leq1/h(n)}$, so that 
\begin{equation*}
    \e_x\left( \int_0^\infty \1{X_t^{(n)}\leq1/h(n)}\1{\tau^{(n)}=\infty}\right) = \int_0^{1/h(n)}(1-e^{-\rho_ny})
\left(e^{-\rho_nx}W^{(n)}(y)-W^{(n)}(y-x)\ind_{\{y\ge x\}}\right)dy.
\end{equation*}
With calculations similar to (\ref{0354}) and (\ref{1064}) , we find that this last term is less or equal than
\begin{align*}
    c_x + \int_{0}^{1/h(n)}\int_{u}^{u+x}(1-e^{-\rho y})\,dy\,U_n(du)\, &\leq c_x + xU_n([0,1/h(n)]) \\
    &\leq c_x + c_1x\frac{h(n)-\rho_n}{\varphi^{(n)}(h(n))}\leq\frac{C}{\phi^{(n)}(h(n))}\,,
\end{align*}
where $c_x$ and $c_1$ are constants which do not depend on $n$ and where we used (\ref{6724}) 
for the last inequality.
\end{proof}

\begin{lemma}\label{447} 
Let ${\rm e}_n$ be as in Lemma $\ref{3722}$.
\begin{itemize}
\item[$1.$]
If $\displaystyle \sum_{n\geq 0} \frac{\phi(\varepsilon_n)}{\phi(\varepsilon_n+h(n))}<\infty$, then 
$\left( (X_{\tau^{(n)}_{1/h(n)}}-X^{(n)}_{\tau^{(n)}_{1/h(n)}})\1{\tau_n=\infty}\right)_{n\ge0}$ converges 
toward $0$, $\p_x$-almost surely, for all $x>0$.
\item[$2.$]
 If $\displaystyle \sum_{n\geq 0} \frac{\phi(\varepsilon_n)}{h(n)}<\infty$, then 
 $\left( (X_{{\rm e}_n}-X^{(n)}_{{\rm e}_n})\1{\tau_n=\infty}\right)_{n\ge0}$ converges toward $0$, 
 $\p_x$-almost surely, for all $x>0$.
\end{itemize}
\end{lemma}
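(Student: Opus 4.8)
\emph{Proof plan.} The plan is to prove both parts simultaneously. Write $\Delta^{(n)}:=X-X^{(n)}=\sum_{k\ge n}S^{(\varepsilon_k)}$; by Theorem \ref{2065} this is a subordinator, independent of $X^{(n)}$ and, in the setting of part $2.$, also of the pair $(X^{(n)},{\rm e})$, with $\e\big(e^{-\lambda\Delta^{(n)}_t}\big)=e^{t\Gamma_n(\lambda)}$ where $\Gamma_n:=\varphi-\varphi^{(n)}$; note $\Gamma_n\le0$ on $[0,\infty)$ since $\varphi$ is convex with $\varphi(0)=0$. I set $T_n:=\tau^{(n)}_{1/h(n)}$ in part $1.$ and $T_n:={\rm e}_n$ in part $2.$; in either case $T_n<\infty$ $\p_x$-a.s.\ because $X^{(n)}$ drifts to $+\infty$, and both $T_n$ and $\1{\tau^{(n)}=\infty}$ are measurable with respect to the $\sigma$-field generated by $X^{(n)}$ (resp.\ by $(X^{(n)},{\rm e})$). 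The goal is then to establish that $\Delta^{(n)}_{T_n}\1{\tau^{(n)}=\infty}\to0$ $\p_x$-a.s., which is the assertion of both parts since $\Delta^{(n)}_{T_n}=X_{T_n}-X^{(n)}_{T_n}$. The only real obstruction is that $\varphi'(0^+)=-\infty$ makes $\Delta^{(n)}$ a subordinator of \emph{infinite} mean, so one cannot control $\e_x(\Delta^{(n)}_{T_n}\1{\tau^{(n)}=\infty})$ directly; instead I will estimate the Laplace transform of $\Delta^{(n)}_{T_n}$ and linearise.

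Fix $x>0$ and $\lambda>0$. Conditioning on the relevant $\sigma$-field and using the independence of $\Delta^{(n)}$ together with $\e\big(e^{-\lambda\Delta^{(n)}_t}\big)=e^{t\Gamma_n(\lambda)}$ gives $\e_x\big(e^{-\lambda\Delta^{(n)}_{T_n}}\1{\tau^{(n)}=\infty}\big)=\e_x\big(\1{\tau^{(n)}=\infty}\,e^{T_n\Gamma_n(\lambda)}\big)$, so that, since $\Gamma_n(\lambda)\le0$ and $1-e^{-y}\le y$ for $y\ge0$,
\[\e_x\big((1-e^{-\lambda\Delta^{(n)}_{T_n}})\1{\tau^{(n)}=\infty}\big)=\e_x\big(\1{\tau^{(n)}=\infty}(1-e^{T_n\Gamma_n(\lambda)})\big)\le|\Gamma_n(\lambda)|\,\e_x\big(T_n\1{\tau^{(n)}=\infty}\big).\]
In part $1.$, Lemma \ref{67} bounds $\e_x(T_n\1{\tau^{(n)}=\infty})$ by $C/\phi^{(n)}(h(n))$; in part $2.$, $\e_x(T_n\1{\tau^{(n)}=\infty})\le1/h(n)$ because ${\rm e}$ has unit mean and is independent of $\tau^{(n)}$. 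For the other factor, $|\Gamma_n(\lambda)|=\varphi^{(n)}(\lambda)-\varphi(\lambda)=\big(\varphi(\lambda+\varepsilon_n)-\varphi(\lambda)\big)+\phi(\varepsilon_n)\le C_\lambda\,\varepsilon_n+\phi(\varepsilon_n)$ for a finite constant $C_\lambda$ (mean value theorem and monotonicity of $\varphi'$), while $\varepsilon_n=o(\phi(\varepsilon_n))$ because $\phi$ is concave, $\phi(0)=0$ and $\phi'(0^+)=+\infty$; hence $|\Gamma_n(\lambda)|=O(\phi(\varepsilon_n))$ as $n\to\infty$.

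It remains to verify that the hypotheses make the series $\sum_n|\Gamma_n(\lambda)|\,\e_x(T_n\1{\tau^{(n)}=\infty})$ converge. In part $1.$, for $n$ large $\phi^{(n)}(h(n))=\phi(\varepsilon_n+h(n))-\phi(\varepsilon_n)$, and the hypothesis $\sum_n\phi(\varepsilon_n)/\phi(\varepsilon_n+h(n))<\infty$ forces $\phi(\varepsilon_n)/\phi(\varepsilon_n+h(n))\to0$, so eventually $\phi^{(n)}(h(n))\ge\phi(\varepsilon_n+h(n))/2$ and therefore $\sum_n\phi(\varepsilon_n)/\phi^{(n)}(h(n))<\infty$; in part $2.$ the hypothesis $\sum_n\phi(\varepsilon_n)/h(n)<\infty$ is exactly what is needed. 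Consequently $\sum_n\e_x\big((1-e^{-\lambda\Delta^{(n)}_{T_n}})\1{\tau^{(n)}=\infty}\big)<\infty$ in both cases (each term being at most $1$), so by monotone convergence $(1-e^{-\lambda\Delta^{(n)}_{T_n}})\1{\tau^{(n)}=\infty}\to0$ $\p_x$-a.s. Since the events $\{\tau^{(n)}=\infty\}$ increase, with union $\{\tau=\infty\}$, on $\{\tau=\infty\}$ the indicator equals $1$ for all $n$ large enough, whence $1-e^{-\lambda\Delta^{(n)}_{T_n}}\to0$ and so $\Delta^{(n)}_{T_n}\to0$; on $\{\tau<\infty\}$ the indicator is identically $0$. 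In either case $\Delta^{(n)}_{T_n}\1{\tau^{(n)}=\infty}\to0$ $\p_x$-a.s., which proves the lemma. The main obstacle is exactly the infinite-mean issue flagged above; everything else is routine manipulation of the summability hypotheses.
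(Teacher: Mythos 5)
Your proof is correct, and it is essentially the same argument as the paper's, just packaged more cleanly. The core computation is identical: condition on the $\sigma$-field generated by $X^{(n)}$ (and, in part~2, by ${\rm e}$), which carries $T_n$ and $\1{\tau^{(n)}=\infty}$ and is independent of the subordinator $\Delta^{(n)}=X-X^{(n)}$; use $\e\big(e^{-\lambda\Delta^{(n)}_t}\big)=e^{t\Gamma_n(\lambda)}$ with $\Gamma_n=\varphi-\varphi^{(n)}\le0$; linearise with $1-e^{-y}\le y$; bound $|\Gamma_n(\lambda)|=O(\phi(\varepsilon_n))$ via $\varepsilon_n=o(\phi(\varepsilon_n))$; and control $\e_x(T_n\1{\tau^{(n)}=\infty})$ by Lemma~\ref{67} in part~1 and by $\e({\rm e}_n)=1/h(n)$ in part~2. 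The only real difference is the final deduction of almost sure convergence. The paper introduces an auxiliary exponential ${\rm e}$, establishes the inequality $\p_x(W_n>a{\rm e})\ge(1-e^{-1})\p_x(W_n>a)$, identifies $\p_x(W_n>a{\rm e})$ with the expectation you estimate directly, and then applies Borel--Cantelli at each level $a$. You skip the comparison step altogether and go via the summability $\sum_n\e_x\big((1-e^{-\lambda\Delta^{(n)}_{T_n}})\1{\tau^{(n)}=\infty}\big)<\infty$, which by Tonelli gives a.s.~convergence of the series and hence of the terms to $0$; unwinding $1-e^{-\lambda\Delta^{(n)}_{T_n}}\to0$ for one fixed $\lambda$ to get $\Delta^{(n)}_{T_n}\to0$ (using monotonicity of the sets $\{\tau^{(n)}=\infty\}$) is then immediate. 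This avoids the exponential-comparison device and treats both parts uniformly instead of leaving part~2 as ``similar, omitted''; it is a genuine streamlining of the paper's argument rather than a different method, and it also sidesteps the sign slip in the paper's displayed exponent $e^{-(\varphi(1/a)-\varphi^{(n)}(1/a))\tau^{(n)}_{1/h(n)}}$, which should read $e^{(\varphi(1/a)-\varphi^{(n)}(1/a))\tau^{(n)}_{1/h(n)}}$.
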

\begin{proof} Let ${\rm e}$ be an exponentially distributed random variable with parameter 1 which is 
independent of the processes $X$ and $X^{(n)}$, $n\ge0$. To simplify our calculations, let us denote by 
$(Y^{(n)}_t,t\ge 0)$ the process $(X_t - X^{(n)}_t, t\geq 0)$ and recall that from Theorem \ref{2065}, 
this process  is a subordinator with Laplace exponent $\varphi-\varphi^{(n)}$ and that it is independent 
of $X^{(n)}$. Then for all $a>0$ and $n\ge0$,  
\begin{eqnarray}\label{4640}
&&\p_x\left(Y^{(n)}_{\tau^{(n)}_{1/h(n)}}\1{\tau^{(n)}=\infty} > a {\rm e}\right)=
\int_0^\infty e^{-t} \p_x\left(Y^{(n)}_{\tau^{(n)}_{1/h(n)}}\1{\tau^{(n)}=\infty}> a t\right)dt\\
&&\geq \int_0^1 e^{-t} \p_x\left(Y^{(n)}_{\tau^{(n)}_{1/h(n)}}\1{\tau^{(n)}=\infty}> a\right) dt \nonumber
\geq(1-e^{-1}) \p_x\left(Y^{(n)}_{\tau^{(n)}_{1/h(n)}}\1{\tau^{(n)}=\infty}> a\right).
\end{eqnarray}
On the other hand conditioning by ${\rm e}$ and then by $(\tau^{(n)}_{1/h(n)},\tau^{(n)})$ yields,
\begin{align*}
\p_x\left(Y^{(n)}_{\tau^{(n)}_{1/h(n)}}\1{\tau^{(n)}=\infty}> a {\rm e}\right) &= 1 - 
\e_x\left( \exp\left(-\frac1aY^{(n)}_{\tau^{(n)}_{1/h(n)}}\1{\tau^{(n)}=\infty}\right)\right) \\
&= 1 - \e_x\left( \exp\left(-\frac{1}{a}Y^{(n)}_{\tau^{(n)}_{1/h(n)}}\right)\1{\tau^{(n)}=\infty} + 
\1{\tau^{(n)}<\infty} \right) \\
&=\e_x\left(\left(1-e^{-(\varphi(1/a)-\varphi^{(n)}(1/a))\tau^{(n)}_{1/h(n)}}\right)\1{\tau^{(n)}=
\infty}\right)
\end{align*}
and this last quantity is less or equal than 
$(\varphi(1/a)-\varphi^{(n)}(1/a))\e_x(\tau^{(n)}_{1/h(n)}\1{\tau^{(n)}=\infty})$. 
Now note that $\displaystyle \sum_{n\geq 0}\frac{\phi(\varepsilon_n)}{\phi^{(n)}(h(n))}<\infty$ 
if and only if $\displaystyle \sum_{n\geq 0}\frac{\phi(\varepsilon_n)}{\phi(\varepsilon_n+h(n))}<\infty$.
Then using the series expansion $\varphi(1/a)-\varphi^{(n)}(1/a) = \varphi(\varepsilon_n) + 
\varepsilon_n \varphi'(1/a) + o(\varepsilon_n)$, the fact that $\varepsilon_n = o(\varphi(\varepsilon_n))$, 
Lemma \ref{67} and the hypothesis give $\displaystyle\sum_{n\geq 1} 
\p_x\left(Y^{(n)}_{\tau^{(n)}_{1/h(n)}}\1{\tau^{(n)}=\infty}>a{\rm e}\right)<\infty $. The result follows 
from (\ref{4640}) and Borel Cantelli's lemma.

The proof of the almost sure convergence of the sequence 
$\left( (X_{{\rm e}_n}-X^{(n)}_{{\rm e}_n})\1{\tau_n=\infty}\right)_{n\ge0}$ toward 0 is very similar, so
it is omitted.
\end{proof}

\noindent {\it Proof of Theorem $\ref{1086}$}.
Let $n\ge0$, recall the expression (\ref{4200}) and write 
$\sigma_{1/h(n)}^{(n)} \1{\tau^{(n)}=\infty} - \zeta\1{\zeta<\infty}$ as
\begin{align*}
    \int_0^{\tau^{(n)}_{1/h(n)}} \frac{du}{X^{(n)}_u}\1{\tau^{(n)}= \infty} - \int_0^{\tau^{(n)}_{1/h(n)}} 
    \frac{du}{X_u}\1{\tau^{(n)}= \infty} 
     &+ \int_0^{\tau^{(n)}_{1/h(n)}} \frac{du}{X_u}(\1{\tau^{(n)}= \infty}-\1{\tau = \infty})   \\
    &-\int_{\tau^{(n)}_{1/h(n)}}^\infty \frac{du}{X_u}\1{\tau = \infty}\,.
\end{align*}
The last two terms tend $\p_x$-a.s to $0$ as $n$ goes to $\infty$, from dominated convergence.
For the two first terms, recall that the process $(X_t - X^{(n)}_t, t\geq 0)$ is a subordinator, so that
\begin{align*}
     \int_0^{\tau^{(n)}_{1/h(n)}} \left(\frac{1}{X^{(n)}_u} -  \frac{1}{X_u}\right)\1{\tau^{(n)}=\infty}du 
     \leq \frac{X_{\tau^{(n)}_{1/h(n)}}-X_{\tau^{(n)}_{1/h(n)}}^{(n)}}{\inf_{u\in[0,\tau^{(n)}_{1/h(n)}]}X^{(n)}_u} 
     \int_0^{\tau^{(n)}_{1/h(n)}} \frac{du}{X_u}\1{\tau^{(n)}=\infty}du.
\end{align*}
Then the right hand side of this inequality tends $\p_x$-a.s.~to 0 since on the one hand, $\p_x$-a.s.,       
\begin{eqnarray*}
\lim_{n\rightarrow\infty}\int_0^{\tau^{(n)}_{1/h(n)}}\frac{du}{X_u}\1{\tau^{(n)}=\infty}du&=&
\int_0^{\infty}\frac{du}{X_u}\1{\tau=\infty}du<\infty\,,\\
\lim_{n\rightarrow\infty}\frac{\1{\tau^{(n)}=\infty}}{\inf_{u\in[0,\tau^{(n)}_{1/h(n)}]}X^{(n)}_u}&=&
\frac{\1{\tau=\infty}}{\inf_{u\in[0,\infty)}X_u}<\infty\,,
\end{eqnarray*}
where we used dominated convergence in the first equality, and on the other hand, from Lemma \ref{447},
$\lim_{n\rightarrow\infty}\left(X_{\tau^{(n)}_{1/h(n)}}-X^{(n)}_{\tau^{(n)}_{1/h(n)}}\right)\1{\tau^{(n)}=\infty}=0$. 
Then we have proved that $\lim_n\sigma_{1/h(n)}^{(n)}\1{\tau^{(n)}=\infty} = \zeta\1{\zeta<\infty}$, 
$\p_x$-a.s. 

Now using the equality
$\sigma_{1/h(n)}^{(n)} \1{\sigma^{(n)}_{1/h(n)}<\infty} = \sigma_{1/h(n)}^{(n)} \1{\tau^{(n)}=
\infty}-\sigma_{1/h(n)}^{(n)} \1{\tau^{(n)}_{1/h(n)}<\tau^{(n)}, \tau^{(n)}<\infty} $ together with the fact that 
$\lim_n\1{\tau^{(n)}_{1/h(n)}<\tau^{(n)}, \tau^{(n)}<\infty} = 0$, $\p_x$-a.s. allows us to obtain the convergence 
$\lim_n\sigma_{1/h(n)}^{(n)}\1{\sigma^{(n)}_{1/h(n)}<\infty} = \zeta\1{\zeta<\infty}$, 
$\p_x$-a.s. $\hfill\Box$\\

\begin{proof}[Proof of Proposition $\ref{0456}$] Let us first write,  
\begin{eqnarray}
&&\tilde{\zeta}^{(n)}\ind_{\{\tilde{\zeta}^{(n)}<\infty\}}-\int_0^{{\rm e}_n}\frac{du}{X_u}\ind_{\{\tau=\infty\}}
=\int_0^{{\rm e}_n}\frac{du}{X^{(n)}_u}\ind_{\{{\rm e}_n<\tau^{(n)}\}}-\int_0^{{\rm e}_n}\frac{du}{X_u}
\ind_{\{\tau=\infty\}}\nonumber\\
&&=\int_0^{{\rm e}_n}du\left(\frac{1}{X^{(n)}_u}-\frac{1}{X_u}\right)\ind_{\{{\rm e}_n<\tau^{(n)}\}}
+\int_0^{{\rm e}_n}\frac{du}{X_u}\left(\ind_{\{{\rm e}_n<\tau^{(n)}\}}-\ind_{\{\tau=\infty\}}\right)\,.\label{2300}
\end{eqnarray}
Then from (\ref{5440}), the expression (\ref{4200}) and monotone convergence, it suffices to prove that 
the above term tends almost surely to 0 as $n$ tends to $\infty$. The sequence $(\tau^{(n)})$ is non decreasing 
and tends almost surely to $\tau$. Moreover, for $\p$-almost every $\omega$, there is $n_0$ such that for all 
$n\ge n_0$, $\ind_{\{\tau^{(n)}=\infty\}}(\omega)=\ind_{\{\tau=\infty\}}(\omega)$ and since $({\rm e}_n)$ tends 
almost surely to $\infty$, for $\p$-almost every $\omega$, there is $n_1$, such that for all $n\ge n_1$, 
\[\ind_{\{{\rm e}_n<\tau^{(n)}\}}(\omega)=
\ind_{\{{\rm e}_n<\tau^{(n)}\}\cap\{\tau=\infty\}}(\omega)=\ind_{\{\tau=\infty\}}(\omega)\,,\]
from which we derive that 
\[\lim_n\int_0^{{\rm e}_n}\frac{du}{X_u}
\left(\ind_{\{{\rm e}_n<\tau^{(n)}\}}-\ind_{\{\tau=\infty\}}\right)=0\,,\;\;\mbox{$\p$-a.s.}\]
On the other hand, note the bound of the first term in (\ref{2300}),
\begin{align*}
\int_0^{{\rm e}_n}du\left(\frac{1}{X^{(n)}_u}-\frac{1}{X_u}\right)\ind_{\{{\rm e}_n<\tau^{(n)}\}}
\leq\frac{X_{{\rm e}_n}-X_{{\rm e}_n}^{(n)}}{\inf_{u\in[0,{\rm e}_n]}X_u^{(n)}} 
\int_0^{{\rm e}_n} \frac{du}{X_u}\1{\tau^{(n)}= \infty}du\,,\;\;\mbox{$\p$-a.s.}
\end{align*}
Then we conclude from Lemma \ref{447} in a similar way to the proof of Theorem \ref{1086}.
\end{proof}

\vspace*{.5in}

\noindent {\bf Acknowledgement} Both authors are  grateful for
financial support from the ANR project “Rawabranch” number ANR-23-CE40-0008.

\newpage

\end{document}